\newcommand{\restr}{|}
\newcommand{\boC}{\mathcal{C}}
\newcommand{\boD}{\mathcal{D}}
\newcommand{\boL}{\mathcal{L}}
\newcommand{\boM}{\mathcal{M}}
\newcommand{\boN}{\mathcal{N}}
\newcommand{\boP}{\mathcal{P}}
\newcommand{\boQ}{\mathcal{Q}}
\newcommand{\Hbb}{\mathbb{H}}
\newcommand{\Ebb}{\mathbb{E}}
\newcommand{\Z}{\mathbb{Z}}
\newcommand{\N}{\mathbb{N}}
\newcommand{\R}{\mathbb{R}}
\newcommand{\C}{\mathbb{C}}
\newcommand{\dd}{\mathop{}\!\mathrm{d}}
\newcommand{\ic}{\mathbf{i}}
\DeclarePairedDelimiter{\abs}{\lvert}{\rvert}
\DeclarePairedDelimiter{\norm}{\lVert}{\rVert}
\DeclarePairedDelimiter{\paren}{(}{)}
\newcommand{\st}{\::\:}
\newcommand{\gromprod}[3]{(#1 | #2)_{#3}}
\newcommand{\bfx}{\mathbf{x}}
\newcommand{\bfy}{\mathbf{y}}
\newcommand{\bfz}{\mathbf{z}}
\DeclareMathOperator{\Id}{Id}
\DeclareMathOperator{\supp}{supp}
\renewcommand{\epsilon}{\varepsilon}
\renewcommand{\phi}{\varphi}
\renewcommand{\leq}{\leqslant}
\renewcommand{\geq}{\geqslant}
\newtheorem{thm}{Theorem}[section]
\newtheorem{prop}[thm]{Proposition}
\newtheorem{lem}[thm]{Lemma}
\newtheorem{cor}[thm]{Corollary}
\newtheorem*{prop*}{Proposition}
\theoremstyle{definition}
\newtheorem*{ex*}{Example}
\newtheorem{rmk}[thm]{Remark}
\numberwithin{equation}{section}
\begin{document}

\begin{frontmatter}[classification=text]


\author[sg]{S\'ebastien Gou\"ezel}

\begin{abstract}
We consider random walks on a nonelementary hyperbolic group endowed with a
word distance. To a probability measure on the group are associated two
numerical quantities, the rate of escape and the entropy. On the set of
admissible probability measures whose support is contained in a given
finite set, we show that both quantities depend in an analytic way on the
probability measure. Our spectral techniques also give a new proof of the
central limit theorem, and imply that the corresponding variance is
analytic.
\end{abstract}
\end{frontmatter}


\section{Introduction}

Let $\Gamma$ be a finitely generated group. Let $\mu$ be a finitely supported
probability measure on $\Gamma$, whose support generates $\Gamma$ as a
semigroup (we say that $\mu$ is \emph{admissible}). It defines a random walk
on $\Gamma$, by $Z_n=g_1\dotsm g_n$ where $g_1, g_2,\dotsc$ is a sequence of
$\Gamma$-valued i.i.d.\ random variables with distribution $\mu$. By
definition, $Z_n$ is distributed according to the convolution product
$\mu^{*n}$. Equivalently, $Z_n$ is a Markov chain on $\Gamma$, starting from
the identity $e$ of $\Gamma$, whose transition probabilities are $p(x\to
y)=\mu(x^{-1}y)$. These probabilities are invariant under left
multiplication, hence the Markov chain is homogeneous.

The behavior of the random walk is usually strongly related to the geometric
properties of the group. One can associate several numerical quantities to
the random walk, including:
\begin{itemize}
\item The entropy, defined by
\begin{equation*}
  h(\mu) = -\lim_{n \to \infty} \log \mu^{*n}(\{Z_n\})/n.
\end{equation*}
  The almost sure convergence of this quantity follows from Kingman's
    subadditive theorem. Essentially, the walk at time $n$ visits $e^{hn}$
    points.
\item The escape rate, or drift, defined by
\begin{equation*}
  \ell(\mu) = \lim_{n \to \infty} d(e,Z_n)/n,
\end{equation*}
where $d$ is a fixed (proper, left-invariant) distance on $\Gamma$. Again,
the convergence follows from Kingman's theorem.
\end{itemize}
These quantities are always non-zero in non-amenable groups, and always zero
in nilpotent groups for symmetric walks. Our focus in this article will be on
the former category, and especially on the subclass of hyperbolic groups.

In hyperbolic groups, Erschler and Kaimanovich have proved
in~\cite{erschler_kaim} that both the entropy and the rate of escape depend
continuously on the measure. If one concentrates on measures with a given
finite support, then the parameter space becomes a subset of $\R^d$ for some
$d$, and one can investigate further regularity properties. Our goal in this
article is to show that the entropy, and the rate of escape for a word
distance, are analytic.

Let $\Gamma$ be a nonelementary hyperbolic group, endowed with a word
distance $d$. Let $F$ be a finite subset of $\Gamma$. Denote by $\boP_1^+(F)$
the set of admissible probability measures supported by $F$: this is a subset
of the finite-dimensional space $\boP(F)=\{ \mu:F \to \C\}$. Our main theorem
is the following.

\begin{thm}
\label{thm:main} In this setting, the functions $\mu\mapsto \ell(\mu)$ and
$\mu\mapsto h(\mu)$ are analytic on $\boP_1^+(F)$. More precisely, there
exists an open subset of $\boP(F)$ containing $\boP_1^+(F)$ on which these
two functions extend to analytic functions.
\end{thm}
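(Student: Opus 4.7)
The plan is to realize both $h(\mu)$ and $\ell(\mu)$ as analytic functions of the dominant spectral data of a family of transfer operators $\boL_\mu$ that vary analytically in $\mu$, and then quote standard perturbation theory (Kato) to conclude. The whole difficulty is to build such operators in a setting that, a priori, has no natural dynamics: the random walk on $\Gamma$ itself is not a hyperbolic dynamical system.

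The first step is to replace the walk by its first-passage behavior along geodesic rays. For $x,y\in \Gamma$, let $F(x,y)$ be the probability that the $\mu$-random walk starting at $x$ ever reaches $y$, and let $G(x,y)=\sum_n \mu^{*n}(x^{-1}y)$ be the Green function. In a nonelementary hyperbolic group one expects, and I would first prove, that along any geodesic ray $(x_n)$ emanating from $e$, the quantity $F(e,x_n)$ decays like $e^{-\ell(\mu) n}$, while $\mu^{*n}(\{Z_n\})$ decays like $e^{-h(\mu) n}$, and that both exponents are encoded by the same combinatorial data along the ray. Concretely, by the strong Markov property, $F(e,x_n)$ factors as a product of conditional first-passage probabilities $F(x_k,x_{k+1}\mid \text{past geometry})$, and one hopes to describe the "state" needed to evaluate each factor by a bounded amount of local geometric information — finite pieces of the geodesic in a bounded neighborhood.

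The second, central step is to use the automatic structure of $\Gamma$: there is a finite-state automaton whose infinite paths biject with a well-chosen parametrization of geodesic rays. I would build a transfer operator $\boL_\mu$ acting on Hölder functions on the space of such paths (i.e.\ on a subshift of finite type), whose weights are local analogues of the conditional first-passage probabilities above, together with an auxiliary parameter $s$ that plays the role of a pressure variable. The bulk of the technical work is to show (a) that these local first-passage weights exist and are genuinely local — this is where hyperbolicity is used in the form of Ancona-type inequalities controlling how much of the past geometry actually matters; (b) that $\boL_\mu$ depends analytically on $\mu$ in the operator norm; and (c) that $\boL_\mu$ has a simple isolated top eigenvalue $\lambda(\mu,s)$, with strict spectral gap, on a suitable Banach space of Hölder-regular observables. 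The spectral gap is inherited from the fact that the underlying subshift is mixing, provided the weights are strictly positive and Hölder, which in turn depends on admissibility of $\mu$.

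Once $\boL_\mu$ is in place, the escape rate comes out as $\ell(\mu)=-\partial_s \log\lambda(\mu,s)|_{s=0}$ (evaluated at the parameter making $\lambda=1$), and a parallel construction with weights involving $\log$ of the local probabilities gives $h(\mu)$ via a second derivative or via the top eigenvalue of a companion operator. Since $\lambda$, the eigenprojector, and the spectral gap all depend analytically on $(\mu,s)$ by Kato's perturbation theorem, analyticity of $\ell$ and $h$ follows. Along the way, a standard Nagaev--Guivarc'h argument applied to $\boL_\mu$ with an extra characteristic-function parameter yields the central limit theorem and analyticity of the variance, matching the claim in the abstract. I expect the hardest step by far to be (a): proving that the first-passage probabilities along a geodesic ray factor, up to exponentially small errors, as a product of strictly local weights that are Hölder functions of the finite automaton state. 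This is where a quantitative, perturbation-stable version of Ancona's inequalities in hyperbolic groups is needed, and it is this ingredient — rather than the operator-theoretic machinery around it — that will carry most of the weight of the argument.
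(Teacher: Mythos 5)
Your proposal takes a genuinely different route from the paper. You code geodesic rays by a finite-state automaton and build a transfer operator on the resulting subshift of finite type, with first-passage probabilities as weights. The paper instead works directly with boundary dynamics on the Busemann boundary $\partial_B\Gamma$: for the escape rate it uses the convolution operator $L_\mu u(\xi)=\sum_g \mu(g)u(g\xi)$ on H\"older functions on a minimal subset $\partial'_B\Gamma$, getting quasi-compactness from a Doeblin--Fortet inequality (Lemma~\ref{lem:LY}) plus Hennion's theorem; for the entropy it exhibits the lifted Martin kernel $K^B_\mu$ as the unique fixed point of a nonlinear contracting operator $\boL_\mu$ on a weighted H\"older space and invokes the implicit function theorem. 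Your symbolic-dynamics plan is closer in spirit to the proofs the paper cites for special cases (free groups, Fuchsian and surface groups, renewal structure). The Busemann-boundary framework is chosen here precisely to sidestep the automaton coding in general hyperbolic groups; both routes are legitimate in principle, but they are not the same argument.

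There are, however, two concrete gaps in your plan. First, the claim that $F(e,x_n)$ decays like $e^{-\ell(\mu)n}$ along a geodesic ray is incorrect: that decay rate is governed by the Green metric, not the word-metric escape rate. (Along a $\nu$-typical geodesic ray one gets $-\log F(e,x_n)/n\to h(\mu)/\ell(\mu)$, the Hausdorff dimension; it is never $\ell(\mu)$.) To extract the word-metric escape rate from a transfer operator you must feed in the word-length increment, i.e.\ the Busemann cocycle $c_B(g,\xi)=h_\xi(g^{-1})$, which is exactly what the paper does with the twisted operator $L_{\mu,t}$ in Section~\ref{sec:CLT}. Your auxiliary ``pressure variable'' $s$ is gestured at as the fix, but it is never specified how $s$ couples to the word metric rather than to the Green metric that the first-passage weights already encode. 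Second, the entropy part is barely sketched and misses its central difficulty. Formula~\eqref{eq:h_integral} expresses $h(\mu)$ through the Martin kernel $K^B_\mu$, a ratio of Green functions taken to the boundary, which is not a bounded-range functional of an automaton state. Proving that $\mu\mapsto K^B_\mu$ is analytic is the hard part of the paper (Section~\ref{sec:entropy}): it requires the strong Ancona inequalities (Proposition~\ref{prop:ineq_ancona_strong}) \emph{and} a fixed-point/implicit-function-theorem mechanism. Your sentence ``a parallel construction with weights involving $\log$ of the local probabilities gives $h(\mu)$'' does not engage with this: the logarithm of a local transition weight is not $\log K^B_\mu(x^{-1},\xi)$, and no analyticity statement for the Martin kernel follows from the first-passage construction as you have set it up. You correctly identify uniform, perturbation-stable Ancona inequalities as the load-bearing ingredient, but your plan does not yet use them to control the dependence of the Martin kernel on $\mu$, which is the step that the theorem actually turns on.
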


The random walk converges almost surely to a point of the Gromov boundary of
$\Gamma$. The distribution of the limit is called the exit measure of the
random walk. Its Hausdorff dimension is proportional to
$h/\ell$~\cite[Theorem 4.11]{haissinsky}. Thus, it follows from the previous
theorem that it also depends analytically on $\mu$.

This theorem extends several partial results in the literature. Here are some
previous results in this direction, we refer the reader
to~\cite{gilch_ledrappier} for more statements.
\begin{itemize}
\item For nearest neighbor random walks on the free group, one can obtain
    explicit formulas for $h(\mu)$ and $\ell(\mu)$, thereby proving
    analyticity. Similar results hold in free products,
    see~\cite{gilch_free_products, gilch_free_product_entropy}.
\item In a free group, for measures with a given finite support, both the
    rate of escape and the entropy are analytic, see
    respectively~\cite{gilch_free_group}
    and~\cite{ledrappier_entropy_free_group}.
\item In a general hyperbolic group, but for a restricted class of
    distances, Gilch and Ledrappier prove in~\cite{gilch_ledrappier} that
    the rate of escape is analytic. The condition on the distance is that
    its Busemann boundary should coincide with the Gromov boundary (these
    terms are defined in Paragraph~\ref{subsec:hyperbolic}). This is for
    instance the case if $\Gamma$ acts cocompactly by isometries on a
    hyperbolic space $\Hbb^n$ and the distance is given by $d(g, g') =
    d_{\Hbb^n}(g\cdot O, g'\cdot O)$ where $O$ is a suitable basepoint. On
    the other hand, if $\Gamma$ is not virtually free, this is never the
    case for a word distance as the Busemann boundary is totally
    disconnected, contrary to the Gromov boundary (see
    Paragraph~\ref{subsec:hyperbolic}).
\item In surface groups,~\cite{HMM_renewal_surface} shows that the rate of
    escape is analytic.
\item In a general hyperbolic group, Ledrappier proves
    in~\cite{ledrappier_entropy_hyperbolic} that both the rate of escape
    and the entropy are Lipschitz continuous.
\item This is improved to $C^1$ by Mathieu
    in~\cite{mathieu_entropy_hyperbolic}.
\end{itemize}

Our approach to prove Theorem~\ref{thm:main} is based on the strategy of
Ledrappier in~\cite{ledrappier_entropy_hyperbolic}: We express the rate of
escape and the entropy as integrals of objects living on a suitable boundary
of the group, the Busemann boundary $\partial_B \Gamma$. What we have to show
is that these objects (the stationary measure and the Martin kernel) depend
analytically on the measure $\mu$.

More precisely, let $\nu_\mu$ be a stationary measure for $\mu$ on
$\partial_B \Gamma$. Let $K_\mu$ be the Martin kernel for $\mu$, and
$K_\mu^B$ its lift to $\partial_B \Gamma$. Let $c_B$ be the Busemann cocycle
on $\partial_B \Gamma$. (All these terms will be defined in
Paragraph~\ref{subsec:hyperbolic}.) Then the following formulas are classical
(see for instance~\cite[Proposition 2.2]{gouezel_matheus_maucourant_2}
for~\eqref{eq:ell}, and~\cite[Theorem~3.1]{kaim_vershik} or the comments
before~\eqref{eq:h_integral} for~\eqref{eq:h}):
\begin{equation}
\label{eq:ell}
  \ell(\mu) = \sum_{g\in \Gamma} \mu(g) \int_{\partial_B \Gamma} c_B(g, \xi) \dd\nu_\mu(\xi)
\end{equation}
and
\begin{equation}
\label{eq:h}
  h(\mu) = -\sum_{g\in \Gamma} \mu(g) \int_{\partial_B \Gamma} \log K_\mu^B(g^{-1},\xi)\dd\nu_\mu(\xi).
\end{equation}

There are two main difficulties to prove analyticity statements using these
formulas.

First, to use~\eqref{eq:ell}, we need to know that $\mu \mapsto \nu_\mu$ is
analytic in some sense. This is well known if the action of $\Gamma$ on
$\partial_B \Gamma$ has contraction properties, for instance if $\partial_B
\Gamma$ coincides with the Gromov boundary $\partial \Gamma$. However, if
$\Gamma$ is not virtually free, the projection $\pi_B : \partial_B \Gamma \to
\partial \Gamma$ is not one-to-one (see Paragraph~\ref{subsec:hyperbolic}),
and points in the same fiber of $\pi_B$ do not feel contraction. As a
consequence, we do not know if there is a unique stationary measure on
$\partial_B\Gamma$. To work around this problem, we work on a subset
$\partial'_B \Gamma$ of $\partial_B \Gamma$ which is minimal for the
$\Gamma$-action, and show using a non-constructive spectral argument that
there is some form of contraction there (more precisely, a spectral gap for
the Markov operator associated to the walk, on a suitable function space,
with a simple dominating eigenvalue). Hence, there is a unique stationary
measure on $\partial'_B \Gamma$, depending analytically on $\mu$. Together
with~\eqref{eq:ell}, this shows that $\ell(\mu)$ depends analytically on
$\mu$.

Second, in~\eqref{eq:h}, the cocycle $\log K_\mu^B(g^{-1},\xi)$ also depends
on $\mu$. This difficulty is serious, making the entropy harder to study in
general than the escape rate. To get the analyticity of the entropy, we need
to show that $\mu\mapsto K_\mu^B$ is analytic in some sense. Ledrappier
obtains in~\cite{ledrappier_entropy_hyperbolic} the kernel
$K_\mu^B(\cdot,\xi)$ for each $\xi$ by applying a sequence of contracting
operators depending on $\xi$. Choosing carefully the number of contraction
steps, he deduces a Lipschitz control on $K_\mu^B$, and thus that the entropy
is Lipschitz. Instead, we will exhibit the whole kernel $(x,\xi)\mapsto
K_\mu^B(x,\xi)$ as the unique fixed point of a non-linear operator, depending
analytically on $\mu$. The analyticity of $\mu \mapsto K_\mu^B$ then follows
from a suitable application of the implicit function theorem.

This approach is reminiscent of the study of unstable foliations in
hyperbolic dynamics. The usual strategy is to apply a map called the graph
transform, enjoying contraction properties, to obtain the unstable leaf at a
point. Unstable leaves at nearby points are then compared by iterating the
graph transform a finite but carefully chosen number of times. Another
approach, advocated by Hirsch, Pugh and Shub in~\cite{hirsch_pugh_shub}, is
to see the whole family of unstable manifolds as the fixed point of a single
operator. The proper setting is more complicated to develop, but once this is
done it is extremely powerful. We follow essentially a similar strategy.

\begin{rmk}
The kernel $K_\mu(x,\xi)$ is initially defined on the (geometric) Gromov
boundary. However, to construct an operator of which it is a fixed point, it
is convenient to have an underlying combinatorial structure. This is more
efficiently done using the Busemann boundary. Therefore, although the
statement on analyticity on the entropy does not depend on a distance choice,
its proof relies on the tools we develop to show that the escape rate is
analytic for the word distance.
\end{rmk}

\begin{rmk}
The escape rate $\ell(\mu)$ depends on the measure $\mu$, but also on the
choice of a distance on $\Gamma$. Theorem~\ref{thm:main} is formulated for a
word distance. Other natural distances can be constructed as follows.
Consider a free cocompact action of $\Gamma$ by isometries on a
Gromov-hyperbolic space $X$, take a basepoint $O \in X$, and define $d(g, g')
= d_X(g \cdot O, g' \cdot O)$. (The case of a word distance corresponds to
the natural action of $\Gamma$ on its Cayley graph with the graph distance).
We do not know if the escape rate $\ell(\mu)$ depends analytically on $\mu$
in this generality. Indeed, our proofs rely on some contraction properties of
the dynamics on the Busemann boundary, for some suitable distance, which are
not clear in this generality.

For the word distance, we prove below that these requirements are met (the
distance on the Busemann boundary is defined in~\eqref{eq:def_d}, the
contraction properties are proved in Lemma~\ref{lem:LY}). The situation is
also well behaved if one considers an action of $\Gamma$ by isometries on
$X=\mathbb{H}^k$ (or a more general $CAT(-1)$ space) and defines a distance
$d$ on $\Gamma$ using the $X$ distance on an orbit as above. Indeed, in this
case, the Busemann boundary can be identified with a subset of the Gromov
boundary of $X$. One thus gets a distance with good contracting properties on
the Busemann boundary, from which one can deduce that the drift is analytic
following (simplified versions of) the proofs below. This case is in fact
easier than the case of the word distance, and was already known, see
Corollary~4.2 in~\cite{gilch_ledrappier}.
\end{rmk}

\begin{rmk}
Theorem~\ref{thm:main} only proves analyticity of the escape rate and the
entropy on the set of admissible measures. One may hope that these quantities
are analytic on a larger set of measures, for instance those that generate a
nonelementary subgroup. However, there are considerable difficulties to
extend the proofs to this more general situation. For the escape rate, the
problem is in Proposition~\ref{prop:spectral_description}: we consider there
the dominating eigenvalue of an operator $L_\mu$ associated to $\mu$. If
$\mu$ is not admissible, this eigenvalue can have a nontrivial multiplicity,
and perturbations of non-simple eigenvalues can be non-analytic. The
situation is even worse for the analyticity of the entropy: there, we make
use of the whole machinery of Ancona inequalities (see for instance
Proposition~\ref{prop:ineq_ancona_strong}), which are only known for
admissible measures.
\end{rmk}

Once the spectral gap is available, standard techniques due to Le
Page~\cite{lepage_trick} imply several results on the behavior of the random
walk, including the central limit theorem, the law of the iterated logarithm
and large deviations estimates. As an illustration, we prove the following
statement:
\begin{thm}
\label{thm:CLT} Under the assumptions of Theorem~\ref{thm:main}, there exists
for all $\mu\in \boP^+_1(F)$ a real number $\sigma^2(\mu)> 0$ such that
$(d(e,Z_n)-n\ell(\mu))/\sqrt{n}$ converges in distribution to $\boN(0,
\sigma^2(\mu))$ when $n$ tends to infinity. Moreover, $\mu\mapsto
\sigma^2(\mu)$ is analytic on $\boP^+_1(F)$.
\end{thm}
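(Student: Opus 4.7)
The plan is to deduce both the CLT and the analyticity of the variance from the Nagaev--Guivarc'h perturbation method, applied to the same spectral setup that underlies Theorem~\ref{thm:main}. Recall that its proof provides, for each $\mu\in\boP_1^+(F)$, a transfer operator $L_\mu$ on a suitable Banach space of functions on $\partial'_B\Gamma$ with a simple dominating eigenvalue $1$, unique stationary measure $\nu_\mu$, spectral gap, and analytic dependence on $\mu$. The first step is to note that, for $\xi$ in a compact subset of $\partial'_B\Gamma$, the distance $d(e,g)$ differs from $c_B(g,\xi)$ by a uniformly bounded quantity (the point of working with the Busemann compactification for a word distance), so that one may replace $d(e,Z_n)$ by $c_B(Z_n,\xi_0)$ up to $O(1)$ without affecting the CLT, and then use the cocycle relation to express the resulting characteristic function as the iterate of a twisted transfer operator.

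Specifically, I would introduce the Fourier-perturbed operator
\[
  L_{\mu,t} f(\xi) = \sum_{g\in F} \mu(g)\, e^{\ic t\, c_B(g,\xi)}\, f(g^{-1}\xi),
\]
which coincides with $L_\mu$ at $t=0$ and depends jointly analytically on $(\mu,t)\in\boP(F)\times\C$ in operator norm on the space used for Theorem~\ref{thm:main} (analyticity in $t$ uses only that $c_B(g,\cdot)$ is a bounded function on $\partial'_B\Gamma$ for each $g\in F$). Standard analytic perturbation theory then supplies, in a complex neighbourhood of $\boP_1^+(F)\times\{0\}$, a simple dominating eigenvalue $\lambda(\mu,t)$ with $\lambda(\mu,0)=1$; differentiating the eigenvalue equation and pairing with $\nu_\mu$ at $t=0$ gives $\partial_t\lambda(\mu,0)=\ic\,\ell(\mu)$ thanks to~\eqref{eq:ell}.

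The Nagaev decomposition then yields
\[
  \Ebb\bigl[e^{\ic t(d(e,Z_n)-n\ell(\mu))/\sqrt n}\bigr] = e^{-\ic t\sqrt n\,\ell(\mu)}\,\lambda\bigl(\mu,t/\sqrt n\bigr)^{\!n} + R_n(t),
\]
where $R_n(t)\to 0$ exponentially fast by the spectral gap of $L_{\mu,0}$ away from its leading eigenvalue. A Taylor expansion $\log\lambda(\mu,t)=\ic t\,\ell(\mu)-t^2\sigma^2(\mu)/2+O(t^3)$ then produces convergence of the left-hand side to $e^{-t^2\sigma^2(\mu)/2}$ with
\[
  \sigma^2(\mu) = -\partial_t^2\lambda(\mu,0)-\ell(\mu)^2.
\]
Since $\lambda$ is jointly analytic in $(\mu,t)$, this formula immediately gives the analyticity of $\mu\mapsto\sigma^2(\mu)$ on $\boP_1^+(F)$.

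The one genuinely delicate point, and the main obstacle, is the strict positivity $\sigma^2(\mu)>0$. The vanishing $\sigma^2(\mu)=0$ would force the centred cocycle $(g,\xi)\mapsto c_B(g,\xi)-\ell(\mu)$ to be a measurable coboundary for the shift on trajectories, i.e.\ $c_B(g,\xi)-\ell(\mu)=u(\xi)-u(g^{-1}\xi)$ for some measurable $u$ on $\partial'_B\Gamma$. Evaluating this identity along the orbit of a loxodromic $\gamma\in\Gamma$ whose attracting fixed point lies in $\partial'_B\Gamma$ forces the translation length of $\gamma$ to equal $\ell(\mu)$; nonelementarity of $\Gamma$ then supplies two loxodromic elements with different translation lengths in the semigroup generated by $F$, giving the required contradiction. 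This coboundary obstruction is the only step that does not fall out directly from the spectral machinery already in place for Theorem~\ref{thm:main}.
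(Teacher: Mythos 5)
Your proposal follows the same Le Page / Nagaev--Guivarc'h route as the paper: perturb $L_\mu$ by the Fourier multiplier $e^{\ic t c_B(g,\cdot)}$, extract the variance from the second derivative of the leading eigenvalue, and handle positivity separately. The overall skeleton is right, but there are two genuine gaps plus a few inaccuracies.

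\emph{Peripheral spectrum.} You assert that $R_n(t)\to 0$ exponentially fast ``by the spectral gap of $L_{\mu,0}$ away from its leading eigenvalue.'' But Proposition~\ref{prop:spectral_description} does not give such a gap: it only yields a \emph{simple} eigenvalue at $1$ together with finitely many further simple eigenvalues $\rho$ of modulus exactly $1$, reflecting possible periodicity of the $\Gamma$-action on $\partial'_B\Gamma$ (the example $F_2\times\Z/2\Z$ in the paper realizes a peripheral eigenvalue $-1$). So there is no gap separating $1$ from the rest of the peripheral spectrum, and the corresponding terms $L_{\mu,t/\sqrt n}^n\Pi_{\rho,t/\sqrt n}1$ are not exponentially small. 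The paper's fix is the observation that $\Pi_{\rho,0}1=0$ for $\rho\neq 1$, so by analyticity $\Pi_{\rho,t/\sqrt n}1\to 0$, and since $L_{\mu,t/\sqrt n}^n$ is bounded on $C^0$ the contribution is $o(1)$. Without this observation, your $R_n(t)\to 0$ is not justified.

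\emph{Positivity.} As written, the degenerate case of the CLT gives a \emph{continuous} (in fact H\"older) coboundary $c_B(g,\xi)-\ell = v(\xi)-v(g\xi)$, not the form $u(\xi)-u(g^{-1}\xi)$ you wrote (with the paper's convention $L_\mu u(\xi)=\sum\mu(g)u(g\xi)$, only $v(\xi)-v(g\xi)$ telescopes along products); extending along $(\supp\mu)^n$ then forces $c_B(g,\xi)=n\ell+v(\xi)-v(g\xi)$. Your loxodromic-translation-length route is not wrong in spirit, but as stated it is incomplete: what is forced is that any loxodromic $\gamma\in(\supp\mu)^n$ with a $\gamma$-fixed Busemann point in $\partial'_B\Gamma$ has translation length exactly $n\ell$, so ``two loxodromics with different translation lengths'' is not the right criterion --- you need incompatible ratios of translation length to $\mu$-word length, and you also need to argue that the Gromov attracting fixed point lifts to a genuinely $\gamma$-fixed point in $\partial'_B\Gamma$ (a priori $\gamma$ only permutes a finite fiber). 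The paper's argument short-circuits all of this: admissibility gives some $n$ with $\mu^{*n}(e)>0$ (see~\eqref{eq:exists_good_N}), and plugging $g=e$ into the identity yields $0=n\ell$, a contradiction with $\ell>0$. That one-line argument is what you should use.

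\emph{Minor.} The relation $d(e,g)=c_B(g,\xi)+O(1)$ is \emph{not} uniform over $g\in\Gamma$; it holds only when $g$ stays away from the direction $\pi_B(\xi)$ in $\partial\Gamma$ (see~\eqref{eq:dh_bounded_diff}). For the random walk this is true in probability because the harmonic measure is atomless, which is exactly the content of~\eqref{eq:cBproba2}; the transfer of the CLT from $c_B(Z_n,\xi)$ to $d(e,Z_n)$ must be carried out via this convergence in probability, not via a uniform $O(1)$ bound. Also, your perturbed operator should read $f(g\xi)$, not $f(g^{-1}\xi)$, for the cocycle identity to produce $L_{\mu,t}^n 1(\xi)=\Ebb(e^{\ic t c_B(Z_n,\xi)})$.
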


The central limit theorem is already known under much weaker assumptions on
the measure $\mu$: it suffices that it has a second moment, it does not need
to be admissible, and the distance can be more general than a word distance.
This is proved in~\cite{benoist_quint_TCL_hyperbolic}. Another approach
(which works also in acylindrically hyperbolic groups) has recently been
developed by Mathieu and Sisto~\cite{mathieu_sisto}. If one replaces the word
distance by a nicer distance (for which the Gromov boundary and the Busemann
boundary coincide), then the central limit theorem had been proved earlier by
Bj\"orklund in~\cite{bjorklund_CLT}.

The analyticity of $\mu\mapsto \sigma^2(\mu)$ is proved
in~\cite{HMM_renewal_surface} for surface groups, but it is new in this
generality.

\begin{rmk}
\label{rmk:infinite_support} The assumption of finite support can be somewhat
weakened for the analyticity of $\mu\mapsto \ell(\mu)$ and $\mu\mapsto
\sigma^2(\mu)$. Indeed, it can be replaced by an exponential moment
condition, of the form $\sum \abs{\mu(g)} e^{\alpha\abs{g}}<\infty$, for any
fixed $\alpha>0$. Denote by $\boP(\alpha)$ the set of measures satisfying
this condition (it is a Banach space for the corresponding norm), by
$\boP_1(\alpha)$ its elements with $\sum \mu(g)=1$, and by $\boP_1^+(\alpha)$
its elements which, furthermore, are nonnegative and admissible. Then $\ell$
and $\sigma^2$ are analytic on $\boP_1^+(\alpha)$, i.e., they extend to
analytic functions on a neighborhood of this set in $\boP(\alpha)$. The
proofs are exactly the same as for the finite support case, modulo some
details on analytic functions on Banach spaces which are explained in
Paragraph~\ref{subsec:analytic}.

On the other hand, for the entropy, finite support is essential to our
argument. It is likely that it can be weakened to exponential moment of some
high enough order, using the techniques of~\cite{gouezel_infinite_support},
at the price of significant technical complications.
\end{rmk}

The paper is organized as follows: Section~\ref{sec:prelim} recalls classical
results for random walks on hyperbolic groups. Then, the analyticity of the
rate of escape is proved in Section~\ref{sec:escape}, and the analyticity of
the entropy is proved in Section~\ref{sec:entropy}. Finally,
Section~\ref{sec:CLT} is devoted to the central limit theorem.

\section{Preliminaries}
\label{sec:prelim}

In this paragraph, we recall classical results on random walks on hyperbolic
groups that we will need later on. See~\cite{ghys_hyperbolique} for a general
introduction to hyperbolic groups, and~\cite{ledrappier_entropy_hyperbolic,
haissinsky, gouezel_matheus_maucourant_2} for properties of random walks
there.

\subsection{Hyperbolic groups}

\label{subsec:hyperbolic}

Let $\Gamma$ be a finitely generated group, with a finite symmetric
generating set $S$. The word distance $d=d_S$ is given by
\begin{equation*}
  d(x,y) = \inf\{ n\in \N \st \exists s_1,\dotsc,s_n \in S\text{ with } x^{-1}y=s_1\dotsm s_n\}.
\end{equation*}
This is the graph distance on the Cayley graph of $(\Gamma,S)$.

The group $\Gamma$ is \emph{hyperbolic} if all geodesic triangles in its
Cayley graph are thin, i.e., there exists $\delta\geq 0$ such that each side
of such a triangle is included in the $\delta$-neighborhood of the union of
the two other sides. This geometric definition has a metric counterpart, as
follows. For $x,y,z\in \Gamma$, define the Gromov product of $x$ and $y$ with
basepoint $z$ as
\begin{equation*}
  \gromprod{x}{y}{z} = ( d(x,z)+d(y,z)-d(x,y))/2.
\end{equation*}
It is small if $z$ is close to a geodesic from $x$ to $y$. One should think
of $\gromprod{x}{y}{z}$ as the time during which two geodesics from $z$ to
$x$ and from $z$ to $y$ remain close. The group $\Gamma$ is hyperbolic if and
only, for some $\delta\geq 0$, and for all $x_1,x_2,x_3,z\in \Gamma$,
\begin{equation}
\label{eq:def_hyperb}
  \gromprod{x_1}{x_3}{z} \geq \min( \gromprod{x_1}{x_2}{z}, \gromprod{x_2}{x_3}{z})-\delta.
\end{equation}
A hyperbolic group $\Gamma$ is \emph{nonelementary} if it is not finite nor
virtually $\Z$. Equivalently, it is non-amenable. Then it contains a free
group on two generators.

The Gromov boundary of $\Gamma$, denoted by $\partial \Gamma$, is the set of
equivalence classes of geodesic rays, where two such rays are equivalent if
they stay a bounded distance away. It has a canonical topology, for which
$\Gamma \cup \partial \Gamma$ is a compact space. A sequence $x_n\in \Gamma$
converges to some point on the boundary if and only if
$\gromprod{x_n}{x_m}{z}$ tends to infinity when $n,m\to \infty$ for some, or
equivalently all, basepoint $z$.

The group $\Gamma$ acts on itself by left-multiplication. This action extends
to a continuous action on $\Gamma \cup \partial\Gamma$, and in particular on
$\partial \Gamma$.

\medskip

All the notions we have described up to now are invariant under
quasi-isometry. Hence, they do not depend on the choice of the generating set
$S$ or the word metric $d$ (although the precise value of $\delta$ does). We
turn to a notion that depends on finer details of the distance (this is
necessary to compute the escape rate, which really depends on the distance).

To $y\in \Gamma$, we associate the corresponding normalized distance function
$h_y(x) = d(x,y)-d(e,y)$. It is $1$-Lipschitz and satisfies $h_y(e)=0$. A
horofunction $h$ on $\Gamma$ is a pointwise limit of a sequence $h_{y_n}$
where $y_n\to \infty$. It is still $1$-Lipschitz, satisfies $h(e)=0$, and
$h(\Gamma) \subseteq \Z$. The map $x\mapsto h_x$ is an embedding $\Gamma \to
C^0(\Gamma,\R)$. Identifying $\Gamma$ with its image, and taking its closure
in $C^0(\Gamma,\R)$ for the topology of pointwise convergence, we obtain a
compactification $\Gamma \cup \partial_B \Gamma$, where each $\xi \in
\partial_B \Gamma$ corresponds to a horofunction $h_\xi$ obtained as above.
The compact space $\partial_B \Gamma$ is called the \emph{Busemann boundary}
associated to $(\Gamma,d)$. It really depends on $d$, not just its
quasi-isometry class.

If a sequence $y_n\in \Gamma$ converges in the Busemann compactification, it
also converges in the Gromov compactification. Hence, there is a canonical
continuous projection $\pi_B :\partial_B \Gamma \to \partial \Gamma$, which
is onto but not one-to-one in general. Indeed, the space $\partial_B \Gamma$
is made of $\Z$-valued functions, hence it is totally discontinuous, while
$\partial \Gamma$ is totally discontinuous if and only if $\Gamma$ is
virtually free, see~\cite[Theorem 8.1]{kapovich_survey}. The projection
$\pi_B$ is uniformly finite-to-one
by~\cite{coornaert_papadopoulos_horofunctions}.

The left-action of $\Gamma$ on itself extends to a continuous action on
$\Gamma \cup \partial_B\Gamma$, and in particular on $\partial_B\Gamma$. In
terms of horofunctions, it is given by the following formula:
\begin{equation*}
  h_{g\cdot \xi} (x) = h_\xi(g^{-1} x) - h_\xi(g^{-1}),
\end{equation*}
where the term $-h_\xi(g^{-1})$ ensures that the expression on the right
vanishes for $x=e$, as it should.

We define a distance on the Busemann boundary by
\begin{equation}
\label{eq:def_d}
  d(\xi, \xi') = e^{-n}
\end{equation}
where $n$ is the largest number such that the functions $h_\xi$ and
$h_{\xi'}$ coincide on the ball $B(e,n)$. It is compatible with the topology
of $\partial_B \Gamma$.

Consider a point $\xi \in \partial_B \Gamma$, and a subset $A\subseteq
\Gamma$ such that $\pi_B(\xi)$ does not belong to the closure of $A$ in the
Gromov compactification. Then there exists $C$ such that, for all $x\in A$,
\begin{equation}
\label{eq:dh_bounded_diff}
  \abs{d(e,x) - h_\xi(x)} \leq C.
\end{equation}
Indeed, if $y_n$ tends to $\xi$, then for large enough $n$ a geodesic from
$x\in A$ to $y_n$ intersects a fixed large enough ball around $e$. This
implies that $d(e,x)-h_{y_n}(x)$ remains uniformly bounded. Letting $n$ tend
to infinity, we obtain the claim.

\subsection{Random walks on hyperbolic groups}

Let $\mu$ be an admissible probability measure on a nonelementary hyperbolic
group $\Gamma$, endowed with a word distance $d$. Almost surely, trajectories
$Z_n$ of the corresponding random walk converge in the Gromov
compactification $\Gamma \cup \partial \Gamma$, towards a random point
$Z_\infty \in \partial\Gamma$~\cite[Theorem 7.6]{kaimanovich_poisson}. The
distribution $\nu$ of $Z_\infty$ is a measure on $\partial\Gamma$, called the
exit measure, or hitting measure, or harmonic measure.

An important property of $\nu$ is that it is $\mu$-stationary, i.e., $\mu
*\nu=\nu$, i.e.,
\begin{equation}
\label{eq:def_stationary}
  \nu=\sum_{g\in \Gamma} \mu(g) g_*\nu.
\end{equation}
Indeed, consider the first jump $g$ of the random walk, distributed according
to $\mu$, and then the subsequent trajectories starting from $g$. By
homogeneity, they are distributed like $g Z_{n-1}$, hence their exit
distribution is $g_* \nu$. Averaging with respect to $g$, one
obtains~\eqref{eq:def_stationary}.

Using the contraction properties of the $\Gamma$-action on $\partial \Gamma$,
one checks that $\nu$ is the unique $\mu$-stationary measure on
$\partial\Gamma$~\cite[Theorem 7.6]{kaimanovich_poisson}. Moreover, it is
atomless and has full support.

\medskip

On the other hand, the random walk does not always converge in the Busemann
compactification $\Gamma \cup \partial_B \Gamma$. For instance, if $\Gamma=
F_d \times \Z/2\Z$ (where $F_d$ is a free group), then the Busemann boundary
is made of two copies of $\partial F_d$, corresponding to the two elements of
$\Z/2\Z$. As the random walk keeps jumping between the two sheets $F_d\times
\{0\}$ and $F_d\times \{1\}$, it does not converge.

Nevertheless, by compactness of $\partial_B\Gamma$, there are stationary
measures on $\partial_B \Gamma$. Such measures project under $\pi_B$ to the
unique stationary measure $\nu$ on $\partial \Gamma$. Since the projection
$\pi_B$ is uniformly finite-to-one, there are finitely many ergodic
stationary measures on $\partial_B\Gamma$, their number being bounded by the
cardinality of the fibers. Indeed, the disintegrations above $\nu$ of such
ergodic measures give rise to mutually singular measures on the fibers, and
there are at most $N$ mutually singular measures on a space of cardinality
$N$. Note however that there is no known example of a hyperbolic group with
an admissible measure for which there are more than one stationary measure on
$\partial_B\Gamma$.

One interest of these stationary measures is that they make it possible to
express the escape rate of the random walk. From this point on, we assume
that $\mu$ has a moment of order $1$. Define the Busemann cocycle
\begin{equation}
\label{eq:def_cB}
  c_B : \Gamma \times \partial_B \Gamma \to \R, \quad c_B(g,\xi) = h_\xi(g^{-1}).
\end{equation}
It is an (action) cocycle, i.e., it satisfies the following equation:
\begin{equation}
\label{eq:cocycle_cB}
  c_B(g_1g_2, \xi) = c_B(g_1, g_2\xi) + c_B(g_2, \xi).
\end{equation}
Moreover, if $\nu_B$ is any stationary probability measure on $\partial_B
\Gamma$, one has
\begin{equation}
\label{eq:ell_integral}
  \ell(\mu) = \int_{\Gamma \times \partial_B \Gamma} c_B(g,\xi) \dd\mu(g) \dd\nu_B(\xi),
\end{equation}
see for instance~\cite[Proposition 2.2]{gouezel_matheus_maucourant_2}.

\medskip
Assume now that $\mu$ has finite support. The Green function associated to
$\mu$ is defined by
\begin{equation}
\label{eq:defGmu}
  G_\mu(x,y) = \sum_n \mu^{*n}(x^{-1}y).
\end{equation}
It is the average time that the walk started from $x$ spends at $y$. In
general, the convergence of this series has a simple probabilistic
interpretation, the transience of the random walk. The Green function is also
equal to $\sum_n (Q_\mu^n \delta_y)(x)$, where $Q_\mu$ is the Markov operator
of the random walk, given by $Q_\mu f(x) = \sum \mu(g) f(xg)$. Since $\Gamma$
is non-amenable, the spectral radius of this operator acting on
$\ell^2(\Gamma)$ is $<1$. Hence, the series~\eqref{eq:defGmu} is well defined
as its general term tends to $0$ exponentially fast. Moreover, if $\mu'$ is
any measure close enough to $\mu$, then this series still makes sense, even
if $\mu'$ is not a probability measure any more.

One interest of the Green function is that it is harmonic away from the
diagonal: if $x\neq y$, then $G_\mu(x,y) = \sum_g \mu(g) G_\mu(xg,y)$. This
follows by considering the first jump of the random walk, from $x$ to $xg$,
and then the trajectories from $xg$ to $y$.

\medskip

We will need some results on the Martin boundary of random walks on groups,
as explained for instance in~\cite{sawyer_martin}. One says that a sequence
$y_n \in \Gamma$ converges in the Martin boundary of $(\Gamma,\mu)$ if
$G_\mu(x, y_n)/G(e,y_n)$ converges to a limit for all $x\in \Gamma$. This
defines a compactification of $\Gamma$ by its Martin boundary $\partial_M
\Gamma$, a compact topological space. By definition, when $y_n$ converges to
a point $\xi$ in the Martin boundary, then $G_\mu(x, y_n)/G(e,y_n)$ converges
to a limit denoted by $K_\mu(x,\xi)$, the Martin kernel associated to $\xi$.
Moreover, the Martin kernel depends continuously on $\xi\in
\partial_M\Gamma$. A remarkable property is that
almost every trajectory of the random walk converges in the Martin boundary.
The distribution of the limit is a stationary measure $\nu_M$ on $\partial_M
\Gamma$. Moreover, the entropy satisfies the following formula:
\begin{equation}
\label{eq:hmu}
  h(\mu) = -\int_{\Gamma\times \partial_M\Gamma} \log K_\mu(x^{-1},\xi) \dd\mu(x) \dd\nu_M(\xi).
\end{equation}
This follows from the fact that $(\partial_M \Gamma, \nu_M)$ is a realization
of the Poisson boundary of the random walk, from the general formula
$K_\mu(x,\xi) = \frac{\dd (x_* \nu_M)}{\dd \nu_M}(\xi)$ on the Martin
boundary, and from the integral formula for the entropy on the Poisson
boundary given in~\cite[Theorem~3.1]{kaim_vershik}. Since we will not need
these notions in the remainder of the paper, readers may take the
formula~\eqref{eq:hmu} as a black box.

In general, it is very difficult to describe explicitly the Martin boundary,
i.e., to understand precisely for which sequences $y_n$ one has for all $x$
the convergence of $G_\mu(x, y_n)/G(e,y_n)$. In the specific case we are
considering, i.e., an admissible finitely supported measure on a hyperbolic
group, Ancona has proved in~\cite{ancona2} that the Martin boundary is
exactly the Gromov boundary (and they coincide as topological spaces). In
other words, when $y_n \in \Gamma$ tends to $\xi \in
\partial \Gamma$, then $G_\mu(x, y_n)/G(e,y_n)$ converges to a limit
$K_\mu(x,\xi)$, which is harmonic in $x$ and depends continuously on $\xi$.
Thus, the formula~\eqref{eq:hmu} reads
\begin{equation*}
  h(\mu) = -\int_{\Gamma\times \partial\Gamma} \log K_\mu(x^{-1},\xi) \dd\mu(x) \dd\nu(\xi),
\end{equation*}
where $\nu$ is the unique stationary measure on $\partial\Gamma$ (it
coincides with $\nu_M$ by uniqueness).

We will rather use this formula on the Busemann boundary. The Martin kernel
lifts to $\Gamma\times \partial_B \Gamma$ by the formula
$K_\mu^B(x,\xi)=K_\mu(x,\pi_B(\xi))$. The projection $(\pi_B)_* \nu_B$ of any
stationary measure $\nu_B$ on $\partial_B \Gamma$ is equal to $\nu$. Hence,
the previous formula yields
\begin{equation}
\label{eq:h_integral}
  h(\mu) = -\int_{\Gamma\times\partial_B\Gamma} \log K_\mu^B(x^{-1},\xi) \dd\mu(x) \dd\nu_B(\xi).
\end{equation}

\subsection{Analytic maps between Banach spaces}
\label{subsec:analytic}

To prove the analyticity of $\ell$ and $h$, we need to use analytic mappings
between Banach spaces. Although this is standard, we recall some details for
the convenience of the reader. An excellent reference
is~\cite{mujica_infinite_dim_complex_analysis}.

Let $E$, $F$ be real Banach spaces. A mapping $f$ from an open subset $U$ of
$E$ to $F$ is real-analytic (or simply analytic) if, for every $a\in U$, $f$
has a Taylor expansion around $a$, of the form $f(x) = \sum_m P^m f(a)
(x-a)$, where $P^m f(a)$ is a homogeneous polynomial of degree $m$, i.e., a
map of the form $y \mapsto A(y,\dotsc, y)$ where $A$ is a continuous
$m$-linear map from $E^m$ to $F$. We require that the above series converges
uniformly on some ball around $a$, i.e., $\sum \norm{P^m f(a)} r^m<\infty$
for some $r>0$.

When the same property is satisfied in complex Banach spaces, we say that $f$
is complex-analytic, or analytic, or holomorphic.

Let $f:U\subseteq E\to F$ be a real-analytic map between real Banach spaces.
Using its Taylor series, one obtains an extension $f_\C$ of $f$ where
$f_\C:U_\C\subseteq E_\C \to F_\C$ is holomorphic on a domain $U_\C$ of the
complexification $E_\C = E\otimes \C$, with $U\subseteq U_\C$ . One can
choose to work either with $f$ or $f_\C$, i.e., in real or complex Banach
spaces. However, holomorphic mappings enjoy several remarkable properties
that are not satisfied in the real case, hence we will mainly use the complex
point of view. Notably:
\begin{itemize}
\item A mapping $f$ is holomorphic if and only if it is everywhere
    differentiable over $\C$ (i.e., it is $\R$-differentiable and its
    differential is $\C$-linear)~\cite[Theorem
    13.16]{mujica_infinite_dim_complex_analysis}. For instance, this
    implies without any computation that a composition of holomorphic maps
    is still holomorphic.
\item A mapping $f:U_\C \subseteq E_\C \to F_\C$ is holomorphic if and only
    if it is continuous and, for every $a,b\in E_\C$, the map $z \mapsto
    f(a+bz)$ from an open subset of $\C$ to $F_\C$ is holomorphic where
    defined~\cite[Theorem 8.7]{mujica_infinite_dim_complex_analysis}.
\item A mapping $f:U_\C \subseteq E_\C \to F_\C$ is holomorphic if and only
    if it is continuous and, for every $a,b\in E_\C$ and every $\psi\in
    F_\C'$, the map $z \mapsto \psi \circ f(a+bz)$ from an open subset of
    $\C$ to $\C$ is holomorphic where defined~\cite[Theorem
    8.12]{mujica_infinite_dim_complex_analysis}.
\item A locally uniform pointwise limit of holomorphic mappings is still
    holomorphic, by~\cite[Proposition
    9.13]{mujica_infinite_dim_complex_analysis}.
\end{itemize}

\section{Analyticity of the escape rate}
\label{sec:escape}

Let $\Gamma$ be a nonelementary $\delta$-hyperbolic group, endowed with a
word distance. Increasing $\delta$ if necessary, we may assume that it is an
integer.

In this section, we prove that the map $\mu\mapsto \ell(\mu)$ is analytic on
$\boP_1^+(F)$, as stated in Theorem~\ref{thm:main}. We will even prove the
stronger statement given in Remark~\ref{rmk:infinite_support}. Let
$\alpha>0$. Denote by $\boP(\alpha)$ the complex Banach space of measures
$\mu$ such that $\sum \abs{\mu(g)}e^{\alpha\abs{g}}<\infty$, with the
corresponding norm $\norm{\mu}_\alpha$. We will show that $\mu\mapsto
\ell(\mu)$ is analytic on a neighborhood of the set
$\boP_1^+(\alpha)\subseteq \boP(\alpha)$ of admissible probability measures.

The idea is to use~\eqref{eq:ell_integral}. Thus, we need to exhibit a family
of stationary probability measures on the Busemann boundary $\partial_B
\Gamma$, depending analytically on $\mu$.

We recall that we have defined a distance $d$ on $\partial_B\Gamma$
in~\eqref{eq:def_d}. We write $C^\beta$ for the space of $\beta$-H\"older
continuous function on $(\partial_B \Gamma,d)$, with the norm
\begin{equation*}
  \norm{u}_{C^\beta} = \norm{u}_{C^0} + \sup_{\xi\neq \xi'} \abs{u(\xi)-u(\xi')}/d(\xi,\xi')^\beta.
\end{equation*}.

The main result of this section is the following.

\begin{thm}
\label{thm:stationary_analytic} Let $\alpha>0$ and $\beta>0$. Let $\mu_0 \in
\boP_1^+(\alpha)$ be an admissible probability measure with a finite moment
of order $\alpha$. Then there exist a neighborhood $U$ of $\mu_0$ in
$\boP(\alpha)$ and an analytic map $\Phi : U \to (C^\beta)^*$ such that, for
$\mu \in U\cap \boP_1^+(\alpha)$, the linear form $\Phi(\mu)$ is given by the
integration against a $\mu$-stationary measure on $\partial_B \Gamma$.
\end{thm}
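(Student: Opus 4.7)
The plan is to realize each $\mu$-stationary measure as a dual eigenvector of the Markov operator
\begin{equation*}
(L_\mu u)(\xi) = \sum_{g \in \Gamma} \mu(g)\, u(g \cdot \xi),
\end{equation*}
for which stationarity of a probability $\nu$ is just $L_\mu^* \nu = \nu$, and then to appeal to Kato's analytic perturbation theory of bounded operators with a simple isolated eigenvalue. As noted in the introduction, $L_\mu$ may have more than one fixed probability on all of $\partial_B\Gamma$, so I replace $\partial_B\Gamma$ by a minimal nonempty closed $\Gamma$-invariant subset $\partial'_B\Gamma \subseteq \partial_B\Gamma$ (obtained from Zorn's lemma), on which the eigenvalue $1$ will turn out to be simple.

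The first step is to check that $\mu \mapsto L_\mu$ is a bounded \emph{linear} (hence trivially complex-analytic) map from $\boP(\alpha)$ into the operator algebra on $C^\beta$ for any $\beta < \alpha$. The bound $d(g\xi, g\xi') \leq e^{|g|} d(\xi,\xi')$ follows directly from \eqref{eq:def_d} together with the action formula $h_{g\xi}(x) = h_\xi(g^{-1}x) - h_\xi(g^{-1})$, and yields $\norm{L_\mu}_{C^\beta} \leq \sum_g \abs{\mu(g)} e^{\beta|g|} \leq \norm{\mu}_\alpha$. The core of the proof is then to establish a spectral gap for $L_{\mu_0}$ on $C^\beta(\partial'_B\Gamma)$ via a Lasota-Yorke inequality
\begin{equation*}
\norm{L_{\mu_0}^n u}_{C^\beta} \leq C \rho^n \norm{u}_{C^\beta} + C_n \norm{u}_{C^0}, \qquad \rho < 1,
\end{equation*}
whose validity makes $L_{\mu_0}$ quasi-compact by Hennion's theorem. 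The geometric content is that although the worst-case dilation of $g$ on $(\partial_B\Gamma,d)$ is $e^{|g|}$, for $\xi \in \partial'_B\Gamma$ a $\mu_0^{*n}$-typical product $g = g_1 \cdots g_n$ has $g^{-1}$ lying deep in the Gromov-boundary direction away from $\pi_B(\xi)$, and \eqref{eq:dh_bounded_diff}-type control then forces $h_\xi$ and $h_{\xi'}$ to agree on a large ball around $g^{-1}$, so that $d(g\xi, g\xi')$ is exponentially smaller than $d(\xi,\xi')$. A standard maximum-principle argument using minimality of $\partial'_B\Gamma$ and admissibility of $\mu_0$ then upgrades quasi-compactness to a genuine spectral gap: any continuous $L_{\mu_0}$-fixed $u$ attains its supremum on a closed $(\supp\mu_0)$-forward-invariant set, which by minimality equals $\partial'_B\Gamma$, forcing $u$ to be constant; peripheral eigenvalues $e^{i\theta}\neq 1$ are ruled out the same way.

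With the spectral gap, the final step is routine: on a neighborhood $U$ of $\mu_0$ in $\boP(\alpha)$, Kato's theory produces a simple isolated eigenvalue $\lambda(\mu)$ of $L_\mu$ close to $1$ together with an analytic rank-one spectral projector $\Pi(\mu)$. For $\mu \in U \cap \boP_1^+(\alpha)$ the constant function $\mathbf{1}$ satisfies $L_\mu \mathbf{1} = \mathbf{1}$, so $\lambda(\mu) = 1$; the one-dimensional range of $\Pi(\mu)^*$ is spanned by a linear form on $C^\beta$ which is $L_\mu^*$-fixed, i.e., satisfies the stationary equation. Normalizing by $\Phi(\mu)(\mathbf{1}) = 1$ defines the required analytic map $\Phi : U \to (C^\beta)^*$, and its restriction to admissible probability measures represents a $\mu$-stationary probability on $\partial'_B\Gamma$, viewed as a measure on $\partial_B\Gamma$ by trivial extension.

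The main obstacle is the Lasota-Yorke estimate, since nothing at the level of individual group elements gives contraction on $\partial'_B\Gamma$: one has to convert the qualitative statement that $\mu_0^{*n}$-typical trajectories concentrate their inverse images away from $\pi_B(\xi)$ into a quantitative $\beta$-Hölder inequality, and this is where hyperbolicity of $\Gamma$, positivity of the drift, and the choice of $\partial'_B\Gamma$ (as opposed to all of $\partial_B\Gamma$, where fibers of $\pi_B$ may host non-contracting pairs) must all be put to work simultaneously.
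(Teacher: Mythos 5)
Your proposal follows essentially the same path as the paper: define $L_\mu$ on $C^\beta(\partial'_B\Gamma)$ for a minimal closed $\Gamma$-invariant subset $\partial'_B\Gamma$, prove a Doeblin--Fortet (Lasota--Yorke) inequality, invoke Hennion's theorem for quasicompactness, deduce simplicity of the eigenvalue $1$ from minimality and admissibility, and feed the resulting spectral gap into Kato's analytic perturbation theory. That is the paper's strategy.

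There is, however, a genuine error in your sketch of the spectral step. You assert that ``peripheral eigenvalues $e^{i\theta}\neq 1$ are ruled out the same way,'' but this is false, and the paper points this out explicitly with the example $\Gamma = F_2 \times \Z/2\Z$ where the operator $L_\mu$ has $-1$ as a (simple) eigenvalue. Your maximum-principle argument only shows that if $L_{\mu_0} u = \rho u$ with $\abs{\rho}=1$, then $v=\abs{u}$ satisfies $v\leq L_{\mu_0}^k v$ and hence is constant; this does \emph{not} force $u$ itself to be constant nor $\rho=1$. The correct statement (Proposition~\ref{prop:spectral_description}) is that there may be finitely many peripheral eigenvalues, all simple with trivial Jordan blocks, of which $1$ is one. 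Kato's theory still applies since $1$ is isolated and simple, so the analytic map $\Phi$ can still be built. But the error resurfaces at the end, where you leave a gap: you must show that the $L_\mu^*$-fixed functional $\Phi(\mu)$ is actually given by integration against a \emph{positive} measure for $\mu\in \boP_1^+(\alpha)$. The paper does this by showing $\Phi(\mu)(u) = \lim_k \frac{1}{k}\sum_{i<k}(L_\mu^i u)(\xi_0)$, whence positivity and the $C^0$-bound follow; the Ces\`aro average is needed precisely because other peripheral eigenvalues $\rho\neq 1$ may be present (so $L_\mu^n u$ need not converge, but $\frac{1}{k}\sum\rho^i\to 0$ handles them). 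Your claim that no such eigenvalues exist would let you skip this, which is exactly why the error is not cosmetic.

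A secondary imprecision: you assign the choice of $\partial'_B\Gamma$ a role in the Lasota--Yorke estimate, claiming fibers of $\pi_B$ ``host non-contracting pairs'' that must be excluded. In fact Lemma~\ref{lem:LY} holds on all of $\partial_B\Gamma$ (the uniform lower bound on $\sum_g\mu^{*n}(g)h_\xi(g^{-1})$ comes from the uniform convergence of cocycle averages to the drift, applied uniformly over $\xi\in\partial_B\Gamma$). Minimality is needed only later, to make the $1$-eigenspace one-dimensional. Finally, you correctly flag that the Lasota--Yorke estimate is where the real work is, but the one quantitative tool the paper relies on there---the uniform-in-$\xi$ convergence $\frac{1}{n}\sum_g\mu^{*n}(g)c_B(g,\xi)\to\ell$ from Benoist--Quint---is not mentioned, and without it the passage from ``typical trajectories escape'' to a uniform H\"older contraction is incomplete.
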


Before proving this theorem, let us see how it implies the analyticity of the
escape rate.

\begin{cor}
The map $u\mapsto \ell(\mu)$, associating to $\mu \in \boP_1^+(\alpha)$ its
escape rate, extends to an analytic map on a neighborhood in $\boP(\alpha)$
of any $\mu_0 \in\boP_1^+(\alpha)$.
\end{cor}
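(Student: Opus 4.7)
The plan is to combine Theorem~\ref{thm:stationary_analytic} with the integral formula \eqref{eq:ell_integral} by viewing $\ell(\mu)$ as the pairing of the stationary measure (provided by $\Phi$) against the Busemann cocycle. First I would fix some $\beta \in (0,\alpha)$ and apply Theorem~\ref{thm:stationary_analytic} to obtain a neighborhood $U$ of $\mu_0$ and an analytic map $\Phi : U \to (C^\beta)^*$. Then, for $\mu \in U$ I would define a candidate extension
\begin{equation*}
  \tilde\ell(\mu) = \sum_{g\in\Gamma} \mu(g)\,\Phi(\mu)\bigl[c_B(g,\cdot)\bigr],
\end{equation*}
and the task reduces to showing (a) that each function $c_B(g,\cdot)$ lies in $C^\beta$ with a controlled norm, (b) that the series defining $\tilde\ell$ converges locally uniformly on $U$, and (c) that $\tilde\ell$ coincides with $\ell$ on admissible probability measures.

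For (a), I would use that $h_\xi$ is $1$-Lipschitz with $h_\xi(e)=0$, so $|c_B(g,\xi)| \leq |g|$, and that $d(\xi,\xi')=e^{-n}$ forces $h_\xi = h_{\xi'}$ on $B(e,n)$. Hence $c_B(g,\xi) = c_B(g,\xi')$ when $|g|\leq n$, and otherwise the trivial bound $|c_B(g,\xi)-c_B(g,\xi')| \leq 2|g|$ gives a H\"older ratio at most $2|g|e^{\beta n}\leq 2|g|e^{\beta|g|}$. Altogether $\|c_B(g,\cdot)\|_{C^\beta} \lesssim |g|e^{\beta|g|}$. Since $\beta<\alpha$, the factor $|g|e^{\beta|g|}$ is dominated by $C_{\alpha,\beta} e^{\alpha|g|}$, so for $\mu \in U$ (shrunk if necessary so that $\|\mu\|_\alpha$ and $\|\Phi(\mu)\|_{(C^\beta)^*}$ are bounded) the series is majorized by a constant multiple of $\|\mu\|_\alpha$, yielding (b).

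For analyticity, I would note that each summand $\mu\mapsto \mu(g)\,\Phi(\mu)[c_B(g,\cdot)]$ is a product of the bounded linear functional $\mu\mapsto\mu(g)$ with the composition of $\Phi$ and the evaluation-at-$c_B(g,\cdot)$ bounded linear functional on $(C^\beta)^*$; such a product of analytic scalar-valued maps is analytic on $U$. The partial sums are therefore analytic, and the uniform convergence on a neighborhood of $\mu_0$ established in the previous step promotes $\tilde\ell$ to an analytic function via the locally-uniform-limit principle recalled at the end of Paragraph~\ref{subsec:analytic}. Finally, (c) is immediate: for $\mu \in U \cap \boP_1^+(\alpha)$, $\Phi(\mu)$ is integration against a stationary measure $\nu_\mu$, Fubini applies (by the absolute convergence just proved), and \eqref{eq:ell_integral} identifies $\tilde\ell(\mu)$ with $\ell(\mu)$.

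I do not expect a serious obstacle here. The only delicate point is the bookkeeping between $\alpha$ and $\beta$: one must apply the previous theorem with $\beta$ strictly smaller than $\alpha$ so that the exponential weight in $\|c_B(g,\cdot)\|_{C^\beta}$ is absorbed by the norm on $\boP(\alpha)$. In the finite-support case this is automatic since only finitely many $g$ contribute and $\beta$ is arbitrary; the slightly more careful estimate is only needed for the extension to exponential-moment measures noted in Remark~\ref{rmk:infinite_support}.
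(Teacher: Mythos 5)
Your proof is correct and follows essentially the same route as the paper: fix $\beta<\alpha$, invoke Theorem~\ref{thm:stationary_analytic} and the formula~\eqref{eq:ell_integral}, estimate $\norm{c_B(g,\cdot)}_{C^\beta}$ by splitting on whether $d(\xi,\xi')$ is larger or smaller than $e^{-\abs{g}}$ to get a bound of the form $Ce^{\alpha\abs{g}}$, then conclude by absolute convergence and the stability of analyticity under locally uniform limits. The added remarks about shrinking $U$ so that $\norm{\Phi(\mu)}_{(C^\beta)^*}$ stays bounded and about Fubini are correct but are small bookkeeping points that the paper leaves implicit; they do not change the argument.
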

\begin{proof}
Take $\beta<\alpha$. Let $\Phi(\mu)$ be the functional constructed in
Theorem~\ref{thm:stationary_analytic}, on a neighborhood $U\subseteq
\boP(\alpha)$ of $\mu_0\in \boP_1^+(\alpha)$. For $\mu \in U \cap
\boP_1^+(\alpha)$, the linear form $\Phi(\mu)$ corresponds to integration
against a $\mu$-stationary measure. Hence,~\eqref{eq:ell_integral} shows that
the escape rate $\ell(\mu)$ is given by
\begin{equation}
\label{eq:ell_mu_expression}
  \ell(\mu) = \sum_{g\in \Gamma} \mu(g) \Phi(\mu)(\xi \mapsto c_B(g,\xi)).
\end{equation}

We claim that the expression on the right defines an analytic function on
$U$. Let us first show that the sum converges absolutely. We need to estimate
the $C^\beta$ norm of $u_g : \xi \mapsto c_B(g,\xi)$. As $c_B(\cdot, \xi)$ is
$1$-Lipschitz and $c_B(e,\xi)=0$, we get $\abs{c_B(g,\xi)}\leq \abs{g}$,
i.e., $u_g$ is bounded in sup norm by $\abs{g}$. If $d(\xi, \xi') \leq
e^{-\abs{g}}$, we have $u_g(\xi)-u_g(\xi')=0$. On the other hand, if $d(\xi,
\xi')> e^{-\abs{g}}$, we use the sup norm bound by $\abs{g}$ to write
\begin{equation*}
  \abs{u_g(\xi)-u_g(\xi')}/ d(\xi, \xi')^\beta
  \leq 2 \abs{g} / e^{-\beta \abs{g}}
  \leq C e^{\alpha \abs{g}},
\end{equation*}
where the last inequality follows as $\beta<\alpha$. Hence,
$\norm{u_g}_{C^\beta} \leq  C e^{\alpha \abs{g}}$. It follows that
\begin{equation*}
  \sum_{g\in \Gamma} \abs{\mu(g) \Phi(\mu)(u_g)} \leq \sum_{g\in \Gamma} \abs{\mu(g)} C e^{\alpha \abs{g}}
  \leq C \norm{\mu}_\alpha.
\end{equation*}
This shows that the right hand side of~\eqref{eq:ell_mu_expression} is well
defined. Moreover, each such sum, when restricted to a finite subset of
$\Gamma$, is analytic. Since analytic functions are closed under uniform
convergence, as we recalled in Paragraph~\ref{subsec:analytic}, it follows
that the sum over $\Gamma$ is also analytic.
\end{proof}

The rest of this section is devoted to the proof of
Theorem~\ref{thm:stationary_analytic} .

To $\mu \in \boP(\alpha)$ is associated a convolution operator on
$\partial_B\Gamma$, and consequently an operator acting on continuous
functions. It is given by
\begin{equation*}
  L_\mu u(\xi) = \sum_g \mu(g) u(g\cdot \xi).
\end{equation*}
The stationary measure $\Phi(\mu)$ of Theorem~\ref{thm:stationary_analytic}
will be constructed as an eigenfunction for the dual operator $L_\mu^*$
acting on $(C^\beta)^*$, for the eigenvalue $1$. Eigenfunctions corresponding
to isolated eigenvalues which are simple depend analytically on the operator,
hence we should understand the spectral properties of $L_\mu^*$ or,
equivalently, of $L_\mu$.

The usual strategy, dating back to Le Page~\cite{lepage_trick}, is to prove
the following contraction estimate:
\begin{equation}
\label{eq:GLP}
  \sup_{\xi,\xi' \in \partial_B\Gamma} \int \paren*{\frac{d(g\xi, g\xi')}{d(\xi,\xi')}}^\beta \dd\mu^{*n}(g) < 1,
\end{equation}
for some $n>0$ and some $\beta>0$. Such an estimate implies that $L_\mu$ has
a simple eigenvalue at $1$ and no other eigenvalue of modulus $\geq 1$, see
for instance~\cite{bjorklund_CLT} or~\cite{benoist_quint_livre}.
Unfortunately, such an inequality can not hold in general in our context. For
instance, in $\Gamma = F_d \times \Z/2\Z$, consider two horofunctions $\xi$
and $\xi'$ coming from the two sheets $F_d\times \{0\}$ and $F_d \times
\{1\}$. Then $g\xi$ and $g\xi'$ are at distance $1$ for any $g\in \Gamma$,
since they differ on the element $e \times 1$, at distance $1$ of the origin.
Hence,~\eqref{eq:GLP} is always equal to $1$. In this example, one can
nevertheless hope that $L_\mu$ has contraction properties due to another
mechanism, maybe matching for instance $g\xi$ with $g'\xi'$ for another group
element $g'$.

In general, we will not obtain the contraction from an explicit contraction
estimate such as~\eqref{eq:GLP}, but rather from a less explicit spectral
argument.

\begin{prop}
\label{prop:action_continue} For $\mu\in \boP(\alpha)$, the operator $L_\mu$
acts continuously on $C^0$, and on $C^\beta$ when $\beta\leq \alpha$. Its
operator norm is bounded by $\norm{\mu}_\alpha$.
\end{prop}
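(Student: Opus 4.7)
The plan is to reduce everything to a Lipschitz-type distortion bound for the $\Gamma$-action on $(\partial_B\Gamma, d)$: namely, for every $g \in \Gamma$ and $\xi,\xi' \in \partial_B\Gamma$,
\begin{equation*}
  d(g\cdot \xi, g\cdot \xi') \leq e^{\abs{g}} d(\xi, \xi').
\end{equation*}
To prove this, suppose $d(\xi,\xi') \leq e^{-m}$, so $h_\xi$ and $h_{\xi'}$ agree on $B(e,m)$. Using the formula $h_{g\cdot \xi}(x) = h_\xi(g^{-1}x) - h_\xi(g^{-1})$, I observe that if $x \in B(e,m-\abs{g})$, then both $g^{-1}x$ and $g^{-1}$ lie in $B(e,m)$ (the former by the triangle inequality), so $h_\xi(g^{-1}x)=h_{\xi'}(g^{-1}x)$ and $h_\xi(g^{-1})=h_{\xi'}(g^{-1})$; subtracting gives $h_{g\cdot\xi}(x) = h_{g\cdot\xi'}(x)$ on $B(e,m-\abs{g})$, hence $d(g\xi,g\xi') \leq e^{\abs{g}-m}$, as required. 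The case $m < \abs{g}$ is trivial since $d \leq 1$ on $\partial_B\Gamma$.

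With this in hand, the $C^0$ bound is immediate: since $\sum_g \abs{\mu(g)} \leq \norm{\mu}_\alpha$, the series $\sum_g \mu(g) u(g\cdot \xi)$ converges absolutely and uniformly in $\xi$; the sum is therefore continuous and satisfies
\begin{equation*}
  \norm{L_\mu u}_{C^0} \leq \norm{\mu}_\alpha \norm{u}_{C^0}.
\end{equation*}
For the H\"older bound, when $\beta \leq \alpha$ the distortion estimate combined with the H\"older condition on $u$ yields
\begin{equation*}
  \abs{u(g\cdot\xi) - u(g\cdot\xi')} \leq [u]_{C^\beta} d(g\xi,g\xi')^\beta \leq [u]_{C^\beta} e^{\beta\abs{g}} d(\xi,\xi')^\beta,
\end{equation*}
where $[u]_{C^\beta}$ denotes the H\"older seminorm. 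Summing against $\mu$ gives
\begin{equation*}
  [L_\mu u]_{C^\beta} \leq \paren*{\sum_g \abs{\mu(g)} e^{\beta\abs{g}}} [u]_{C^\beta} \leq \norm{\mu}_\alpha [u]_{C^\beta},
\end{equation*}
since $\beta \leq \alpha$. Adding the $C^0$ estimate gives $\norm{L_\mu u}_{C^\beta} \leq \norm{\mu}_\alpha \norm{u}_{C^\beta}$.

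There is essentially no obstacle here: the only non-routine point is the distortion bound $d(g\xi, g\xi') \leq e^{\abs{g}} d(\xi,\xi')$, and this is a direct unwinding of the definitions of the distance~\eqref{eq:def_d} and the $\Gamma$-action on horofunctions. The rest is a standard dominated convergence / triangle inequality argument, exploiting that the exponential weight in the definition of $\norm{\mu}_\alpha$ precisely absorbs the exponential distortion of the action on the Busemann boundary.
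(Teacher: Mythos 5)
Your proof is correct and follows essentially the same route as the paper: establish the distortion bound $d(g\xi, g\xi') \le e^{\abs{g}} d(\xi,\xi')$ (the paper's~\eqref{eq:stupid_contraction}) and then let the exponential weight in $\norm{\mu}_\alpha$ absorb it. The only (cosmetic) difference is that you spell out the derivation of the distortion bound from $h_{g\xi}(x)=h_\xi(g^{-1}x)-h_\xi(g^{-1})$, and you track the H\"older seminorm rather than the full norm, which actually yields the stated constant $\norm{\mu}_\alpha$ without the factor of~$2$ that the paper's estimate would literally produce.
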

\begin{proof}
If $\mu$ has finite support, then $L_\mu u$ is the sum of the continuous
functions $\xi \mapsto \mu(g) u(g\cdot \xi)$, bounded by $\abs{\mu(g)}$.
Hence, $L_\mu$ acts continuously on $C^0$ with norm at most $\sum
\abs{\mu(g)} \leq \norm{\mu}_\alpha$. The general case follows by density.

Suppose now that two horofunctions $h_\xi$ and $h_{\xi'}$ coincide on the
ball $B(e,N)$. Then the horofunctions $h_{g\xi}$ and $h_{g\xi'}$ coincide at
least on the ball $B(e, N-\abs{g})$. Therefore,
\begin{equation}
\label{eq:stupid_contraction}
  d(g\xi, g\xi') \leq e^{\abs{g}} d(\xi,\xi').
\end{equation}

Let $u\in C^\beta$. We have
\begin{align*}
  \abs{L_\mu u(\xi) -L_\mu u(\xi')}
  &\leq \sum \abs{\mu(g)} \abs{u(g\xi)-u(g\xi')}
  \leq \sum \abs{\mu(g)} \norm{u}_{C^\beta} d(g \xi, g \xi')^\beta
  \\&
  \leq \sum \abs{\mu(g)} \norm{u}_{C^\beta} e^{\beta \abs{g}}d(\xi, \xi')^\beta
  = \norm{\mu}_\beta \norm{u}_{C^\beta} d(\xi, \xi')^\beta
  \\&
  \leq \norm{\mu}_\alpha \norm{u}_{C^\beta} d(\xi, \xi')^\beta.
\end{align*}
This shows that $L_\mu u$ is again $\beta$-H\"older continuous, with H\"older
constant at most $\norm{\mu}_\alpha \norm{u}_{C^\beta}$.
\end{proof}

This computation does not give any contraction. To get contraction, we should
show that $d(g\xi, g\xi')$ is smaller than $d(\xi,\xi')$ for most $g$.
Starting from the fact that two horofunctions $h_\xi$ and $h_{\xi'}$ coincide
on a ball $B(e,N)$, we thus need to show that they coincide on a bigger ball
centered at $g^{-1}$. This is essentially the content of the following lemma.

\begin{lem}
\label{lem:coincide} Let $h_1$ and $h_2$ be two horofunctions, which coincide
on a ball $B(e,N)$. Let $x\in B(e,N-10\delta)$. Then $h_1$ and $h_2$ coincide
on the ball $B(x, N+h_1(x)-10\delta)$.
\end{lem}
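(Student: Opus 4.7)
The plan is to show $h_1(y) = h_2(y)$ for every $y \in B(x, R)$, where $R := N + h_1(x) - 10\delta$; we may assume $R \geq 0$, else the ball is empty. Since horofunctions are pointwise limits on $\Gamma$ of integer-valued functions, we can pick $a_1, a_2 \in \Gamma$ with $h_{a_i}$ equal to $h_i$ on the finite set $\{e, x, y\} \cup (B(e, N) \cap \Gamma)$. In particular $d(x, a_i) = d(e, a_i) + h_1(x)$ (using the coincidence $h_1(x) = h_2(x)$) and $d(y, a_i) - d(e, a_i) = h_i(y)$ for both $i$.

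First, I use the coincidence hypothesis on $B(e, N)$ to extract a common trunk for the geodesics $[x, a_1]$ and $[x, a_2]$. Taking a vertex $z$ on $[e, a_1]$ at distance $N$ from $e$, one has $h_{a_1}(z) = -N = h_{a_2}(z)$ by hypothesis, which rewrites as $d(e, z) + d(z, a_2) = d(e, a_2)$, i.e.\ $z \in [e, a_2]$. Iterating, $[e, a_1]$ and $[e, a_2]$ may be chosen to share their first $N$ vertices, whence $\gromprod{a_1}{a_2}{e} \geq N$. Combined with $d(x, a_i) = d(e, a_i) + h_1(x)$, a direct computation of the Gromov product at $x$ yields
\[
\gromprod{a_1}{a_2}{x} = \tfrac{1}{2}\bigl(d(x, a_1) + d(x, a_2) - d(a_1, a_2)\bigr) \geq N + h_1(x) = R + 10\delta,
\]
so the geodesics $[x, a_1]$ and $[x, a_2]$ fellow-travel from $x$ for distance at least $R + 10\delta$.

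Next, for $y \in B(x, R)$, I claim there is a vertex $v$ on the common trunk $[x, a_1] \cap [x, a_2]$, at distance at most $R + O(\delta)$ from $x$, lying on a geodesic from $y$ to both $a_1$ and $a_2$. The projection of $y$ onto the trunk furnishes such a candidate up to $O(\delta)$ by thin-triangle considerations, and the $10\delta$ headroom past this projection provides the slack needed to slide $v$ along the trunk to a vertex where the two triangle inequalities are saturated exactly. Granting $v$, the identities $d(v, a_i) = d(x, a_i) - d(x, v)$ (from $v \in [x, a_i]$) and $d(y, a_i) = d(y, v) + d(v, a_i)$ (from $v \in [y, a_i]$) subtract to give
\[
d(y, a_1) - d(y, a_2) = d(x, a_1) - d(x, a_2) = d(e, a_1) - d(e, a_2),
\]
which is exactly $h_{a_1}(y) = h_{a_2}(y)$, hence $h_1(y) = h_2(y)$ by the choice of $a_i$.

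The main obstacle is the construction of $v$ in the second step: upgrading the $O(\delta)$-approximate projection, which is all that generic hyperbolicity provides, to an exact vertex on both geodesics $[y, a_1]$ and $[y, a_2]$. This is precisely where the specific constant $10\delta$ appearing in $R$ is used, absorbing the thin-triangle fuzziness at both ends (the bound $\gromprod{a_1}{a_2}{e} \geq N$ only controls the trunk up to $O(\delta)$, and the projection itself has $O(\delta)$ slack).
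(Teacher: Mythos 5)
Your step~2 (producing $z$ on both $[e,a_1]$ and $[e,a_2]$, hence a shared initial trunk for the geodesics from $e$, and deducing $\gromprod{a_1}{a_2}{x} \geq R + 10\delta$) is sound. The gap is in step~3, and you correctly flag it as ``the main obstacle,'' but it is not a gap that the $10\delta$ slack repairs. You refer to ``the common trunk $[x,a_1] \cap [x,a_2]$,'' yet you have only established that $\gromprod{a_1}{a_2}{x}$ is large, which says the two geodesics fellow-travel within $O(\delta)$ of each other; it does not say they literally share vertices. (Your argument that $[e,a_1]$ and $[e,a_2]$ share their first $N$ vertices used the exact identity $h_{a_1}=h_{a_2}$ on $B(e,N)$, which is a hypothesis; no such identity is available along the geodesics emanating from $x$ --- that is what the lemma is trying to prove.) Worse, even granting a common trunk for the $[x,a_i]$, you also need $v$ to lie on \emph{exact} geodesics $[y,a_1]$ and $[y,a_2]$ simultaneously, i.e.\ $d(y,v)+d(v,a_i)=d(y,a_i)$ for $i=1,2$. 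In a $\delta$-hyperbolic Cayley graph these two geodesics only stay $O(\delta)$-close; there is no device that ``slides'' an approximate projection to a point where four triangle inequalities are all saturated, and one can build graphs where the sets of vertices on geodesics $[y,a_1]$ and $[y,a_2]$ are disjoint past a bounded neighborhood of $y$. So the final display $d(y,a_1)-d(y,a_2)=d(x,a_1)-d(x,a_2)$ does not follow.

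The paper's proof sidesteps exactly this by being \emph{asymmetric} in $h_1$, $h_2$: it uses only one approximating point $z$ (for $h_1$), locates a vertex $b$ on a geodesic $[z,y]$ lying inside $B(e,N)$ via Gromov-product estimates, and then gets the exact identity $h_1(y)=h_1(b)+d(b,y)$ from $b\in[z,y]$. For $h_2$ it never needs $b$ on any geodesic: the $1$-Lipschitz inequality $h_2(y)\leq h_2(b)+d(b,y)$ together with $h_1(b)=h_2(b)$ gives $h_2(y)\leq h_1(y)$, and swapping the roles of $h_1,h_2$ finishes. If you want to salvage your two-approximating-points framework, you would need to replace the exact-geodesic requirement on the $a_2$ side by the same Lipschitz one-sided bound; at that point you have essentially reconstructed the paper's argument.
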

This lemma is trivial for $x=e$, or more generally when $h_1(x)=-d(e,x)$: in
this case, the ball $B(x, N+h_1(x)-10\delta)$ is included in $B(e,N)$, where
we already know that $h_1$ and $h_2$ coincide. On the other hand, it gives
new and useful information for instance when $h_1(x)$ is positive. This lemma
follows easily from the arguments
in~\cite{coornaert_papadopoulos_horofunctions}, as we explain now. The idea
is that geodesics from a point $y\in B(x, N+h_1(x)-10\delta)$ to the point at
infinity directed by $h_1$ have to enter $B(e,N)$, hence the value of $h_1$
on $y$ is determined by its values on $B(e,N)$. The reader should keep this
geometric picture in mind, although the rigorous formalization in terms of
Gromov products is rather tedious.

\begin{proof}
We will use the following easy inequality on Gromov products. For any $u$,
$v$ and $w$, one has $d(w,v)\geq d(w,u)-d(u,v)$. Hence
\begin{equation*}
  \gromprod{u}{v}{w} = ( d(w,u)+d(w,v)-d(u,v))/2 \geq d(w,u)-d(u,v).
\end{equation*}

Take $x\in B(e,N-10\delta)$. As $h_1$ is a horofunction, it is a pointwise
limit of normalized distance functions. Hence, we may take $z$ with
$d(e,z)>2N$ such that $h_1(y) = h_z(y)$ for all $y\in B(e,2N)$, where $h_z(y)
= d(z,y) - d(z,e)$. Write $d(e,z)=M+N$ with $M>N$. Then
\begin{equation*}
  \gromprod{e}{x}{z}
  \geq d(z,e)-d(e,x)
  \geq M+N-(N-10\delta) = M+10 \delta.
\end{equation*}
Consider $y\in B(x, N+h_1(x)-10\delta)$. As $d(z,
x)=d(z,e)+h_z(x)=M+N+h_1(x)$, we obtain
\begin{equation*}
  \gromprod{x}{y}{z} \geq d(z,x)- d(y,x) \geq M+N+h_1(x) - (N+h_1(x)-10\delta)
  = M+10\delta.
\end{equation*}
The inequality~\eqref{eq:def_hyperb} characterizing hyperbolic spaces entails
\begin{equation}
\label{eq:eyx1}
  \gromprod{e}{y}{z} \geq M+9\delta.
\end{equation}

Consider $a$ and $b$ the points on geodesics from $z$ to $e$ and from $z$ to
$y$, at distance $M+9\delta$ of $z$. They belong to $\Gamma$ since $\delta\in
\N$ by assumption. Moreover, $d(e,a)=N-9\delta$. We have $\gromprod{e}{a}{z}
= M+9\delta$ and $\gromprod{y}{b}{z}=M+9\delta$. Combining these two
inequalities with hyperbolicity, and using~\eqref{eq:eyx1}, we get
\begin{equation*}
  \gromprod{a}{b}{z} \geq M+7\delta.
\end{equation*}
Hence,
\begin{equation*}
  d(a,b) = d(a,z)+d(b,z) -2\gromprod{a}{b}{z} \leq 2(M+9\delta)-2(M+7\delta)=4\delta.
\end{equation*}
In particular, $d(b,e) \leq d(a,e)+4\delta \leq N-5\delta$. This implies that
$b\in B(e,N)$, so that $h_1(b)=h_2(b)$.

As $b$ is on a geodesic segment from $z$ to $y$, we have
\begin{equation*}
  h_1(y)=h_z(y)=h_z(b)+d(b,y)=h_1(b)+d(b,y).
\end{equation*}
Moreover, as $h_2$ is $1$-Lipschitz, $h_2(y) \leq h_2(b)+d(b,y)$. This shows
that $h_2(y)\leq h_1(y)$.

We have proved that $h_2 \leq h_1$ on $B(x, N+h_1(x) -10\delta)$. Since
everything is symmetric, the reverse inequality and the equality follow.
\end{proof}

A weak form of contraction of the convolution operator (called a
Doeblin-Fortet inequality) follows using the standard trick of Le
Page~\cite{lepage_trick}, as explained for instance
in~\cite[Lemma~12.6]{benoist_quint_livre}.

\begin{lem}
\label{lem:LY} Let $\mu\in \boP_1^+(\alpha)$. There exist $n>0$, $\beta\leq
\alpha/2$, $C>0$ and $\rho<1$ such that, for any $u\in C^\beta$,
\begin{equation}
\label{eq:LY}
  \norm{L_\mu^n u}_{C^\beta} \leq \rho \norm{u}_{C^\beta} + C \norm{u}_{C^0}.
\end{equation}
\end{lem}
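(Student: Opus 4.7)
The plan is to follow the Le~Page trick, using Lemma~\ref{lem:coincide} in place of the contraction estimate~\eqref{eq:GLP}, which fails in this setting. The idea is to separate, in the expansion $L_\mu^n u(\xi) - L_\mu^n u(\xi') = \sum_g \mu^{*n}(g)(u(g\xi)-u(g\xi'))$, the contribution of short words (which produce genuine contraction via Lemma~\ref{lem:coincide}) from that of long words (whose total mass is small thanks to the exponential moment assumption).

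First I fix $\xi, \xi' \in \partial_B \Gamma$ with $d(\xi,\xi') = e^{-N}$, so that $h_\xi$ and $h_{\xi'}$ agree on $B(e,N)$. For $g$ with $\abs{g} \leq N - 10\delta$, applying Lemma~\ref{lem:coincide} at $x = g^{-1}$ and translating through $h_{g\xi}(y) = h_\xi(g^{-1}y)-h_\xi(g^{-1})$ yields
\[
d(g\xi, g\xi') \leq e^{10\delta}\, e^{-c_B(g,\xi)}\, d(\xi,\xi'),
\]
so the H\"older bound gives $\abs{u(g\xi)-u(g\xi')} \leq e^{10\delta\beta} \norm{u}_{C^\beta} e^{-\beta c_B(g,\xi)} d(\xi,\xi')^\beta$. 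For the remaining $g$ (or in the degenerate case $N<10\delta$), I use the crude bound $\abs{u(g\xi)-u(g\xi')} \leq 2\norm{u}_{C^0}$. Summing against $\mu^{*n}$ and dividing by $d(\xi,\xi')^\beta = e^{-\beta N}$ yields the schematic estimate
\[
\frac{\abs{L_\mu^n u(\xi)-L_\mu^n u(\xi')}}{d(\xi,\xi')^\beta} \leq e^{10\delta\beta} \norm{u}_{C^\beta} \int e^{-\beta c_B(g,\xi)}\dd\mu^{*n}(g) + 2 \norm{u}_{C^0}\, e^{\beta N}\mu^{*n}\{\abs{g}>N-10\delta\}.
\]
For $\beta < \alpha$, Markov's inequality combined with the submultiplicativity $\norm{\mu^{*n}}_\alpha \leq \norm{\mu}_\alpha^n$ gives $\mu^{*n}\{\abs{g}>N-10\delta\} \leq C^n e^{-\alpha(N-10\delta)}$, so for fixed $n$ the second summand is uniformly bounded by a constant times $\norm{u}_{C^0}$, independent of $N$.

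The crux is then the Le~Page-type estimate
\[
\rho_{n,\beta} \coloneqq \sup_{\xi \in \partial_B \Gamma} \int e^{-\beta c_B(g,\xi)}\dd\mu^{*n}(g) < e^{-10\delta\beta}
\]
for well-chosen $n$ and $\beta$. The cocycle identity~\eqref{eq:cocycle_cB} together with a supremum in the inner variable yields submultiplicativity $\rho_{n_1+n_2,\beta} \leq \rho_{n_1,\beta}\rho_{n_2,\beta}$, so it suffices to exhibit some $n_0$ and $\beta>0$ with $\rho_{n_0,\beta}<e^{-10\delta\beta}$. Expanding $e^{-\beta c_B} \leq 1 - \beta c_B + \tfrac{1}{2}\beta^2 c_B^2 e^{\beta\abs{c_B}}$ and controlling the quadratic term via the exponential moment on $\abs{g}$ reduces the problem to showing that $\eta_{n_0}(\xi) \coloneqq \int c_B(g,\xi)\dd\mu^{*n_0}(g)$ is bounded below by a positive multiple of $n_0$, uniformly in $\xi$, once $n_0$ is large. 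Here the cocycle decomposition $\eta_n = \sum_{m=0}^{n-1} L_\mu^m \bar c_B$ (with $\bar c_B(\xi) \coloneqq \int c_B(g,\xi)\dd\mu(g)$), combined with the strict positivity of the drift $\ell(\mu)>0$ and the integral formula~\eqref{eq:ell_integral} which identifies $\ell(\mu)$ as the integral of $\bar c_B$ against any stationary measure, provides this lower bound after a Ces\`aro averaging argument on the Markov operator $L_\mu$. This last step, establishing uniform-in-$\xi$ positivity, is the main technical obstacle and proceeds along the lines of~\cite[Lemma~12.6]{benoist_quint_livre}. Choosing $\beta \leq \alpha/2$ small enough that $e^{10\delta\beta}\rho_{n,\beta}<1$ then yields~\eqref{eq:LY}.
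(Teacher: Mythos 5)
Your proposal is correct and follows essentially the same route as the paper: the same short/long-word split via Lemma~\ref{lem:coincide}, the same Taylor expansion $e^{-\beta c_B}\leq 1-\beta c_B + O(\beta^2)$ exploiting the exponential moment, and the same reduction to a uniform-in-$\xi$ positivity of $\frac{1}{n}\int c_B(g,\xi)\dd\mu^{*n}(g)$, which you (like the paper) delegate to the Benoist--Quint uniform convergence theorem for cocycle averages. The only cosmetic differences are that you absorb the long-word tail into the $C^0$ term rather than the $C^\beta$ term, and your submultiplicativity observation for $\rho_{n,\beta}$ is harmless but unnecessary, since one only needs a single good $n$.
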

\begin{proof}
Let $\beta\leq \alpha/2$. Fix $n\in \N$, and $N>0$. As the sup norm control
$\norm{L\mu^n u}_{C^0} \leq \norm{u}_{C^0}$ is trivial, we just have to
estimate the $\beta$-H\"older constant of $L_\mu^n u$. Given $\xi$ and
$\xi'$, we should bound $\abs{L_\mu^n u(\xi) -L_\mu^n u(\xi')}$. If
$d(\xi,\xi')>e^{-N}$, we simply write
\begin{equation*}
  \abs{L_\mu^n u(\xi) -L_\mu^n u(\xi')} \leq 2\norm{L_\mu^n u}_{C^0}
  \leq 2 \norm{u}_{C^0} \leq d(\xi,\xi')^\beta \cdot 2\cdot e^{\beta N}  \norm{u}_{C^0}.
\end{equation*}
This is compatible with the inequality we seek, for a large $C \geq 2e^{\beta
N}$.

Assume now that $d(\xi,\xi') =e^{-M} \leq e^{-N}$, i.e., the two
horofunctions $h_\xi$ and $h_{\xi'}$ coincide on the ball $B(e,M)$ with
$M\geq N$. When $\abs{g} \leq N-10\delta$, Lemma~\ref{lem:coincide} implies
that $h_\xi$ and $h_{\xi'}$ coincide on the ball $B(g^{-1},
M+h_\xi(g^{-1})-10\delta)$. We recall that $g\cdot \xi$ satisfies
$h_{g\xi}(x)= h_\xi(g^{-1}x)-h_\xi(g^{-1})$. Hence, $h_{g\xi}$ and
$h_{g\xi'}$ coincide on the ball $B(e, M+h_\xi(g^{-1})-10\delta)$. This
yields $d(g\xi, g\xi') \leq e^{10\delta -h_\xi(g^{-1})} d(\xi, \xi')$. If
$\abs{g} > N-10\delta$, we simply use the trivial inequality $d(g\xi, g\xi')
\leq e^{\abs{g}} d(\xi, \xi')$ proved in~\eqref{eq:stupid_contraction}
instead.

We obtain
\begin{multline*}
  \abs{L_\mu^n u(\xi) - L_\mu^n u(\xi')}
  \leq \sum_g \mu^{*n}(g) \abs{u(g\xi)-u(g\xi')}
  \leq \sum_g \mu^{*n}(g) \norm{u}_{C^\beta} d(g\xi, g\xi')^\beta
  \\
  \leq \norm{u}_{C^\beta} d(\xi, \xi')^\beta
    \paren*{ \sum_{\abs{g} \leq N-10\delta}\mu^{*n}(g) e^{\beta(10\delta -h_\xi(g^{-1}))}
    +\sum_{\abs{g}> N-10\delta} \mu^{*n}(g)e^{\beta \abs{g}}}.
\end{multline*}
If the term between parenthesis on the last line is bounded by $\rho<1$,
uniformly in $\xi \in \partial_B \Gamma$, then we get $\abs{L_\mu^n u(\xi) -
L_\mu^n u(\xi')} \leq \rho \norm{u}_{C^\beta} d(\xi, \xi')^\beta$, which is
the desired bound for the pair $\xi,\xi'$.

Hence, to conclude, it suffices to obtain such a bound by $\rho <1$ as above.
It is even sufficient to obtain it for $N=\infty$, since the same bound (with
a slightly larger $\rho'\in (\rho,1)$) then follows for any large enough
finite $N$. Finally, it suffices to find $\beta\leq\alpha/2$ and $n$ such
that, uniformly in $\xi \in \partial_B \Gamma$,
\begin{equation}
\label{eq:eqexp}
  \sum_g \mu^{*n}(g) e^{\beta(10\delta -h_\xi(g^{-1}))} \leq \rho<1.
\end{equation}

We use the inequality $e^t \leq 1+t+t^2 e^{\abs{t}}$. Moreover, there exists
$C$ such that $t^2 \leq C e^{\alpha \abs{t}/2}$ for all real $t$. Hence, the
sum in the above equation is bounded by
\begin{multline*}
  \sum_g \mu^{*n}(g) \paren*{ 1+ \beta (10\delta -h_\xi(g^{-1})) +
  \beta^2 (10\delta -h_\xi(g^{-1}))^2 e^{\beta (10 \delta+\abs{g})}}
  \\
  \leq 1- \beta \paren*{\sum_g  \mu^{*n}(g)(h_\xi(g^{-1})-10\delta)} +
    \beta^2 C \paren*{\sum_g  \mu^{*n}(g)e^{(\beta+\alpha/2)(10 \delta+\abs{g})}}.
\end{multline*}
When $\beta \leq \alpha/2$, the last sum is finite and bounded by $e^{10
\alpha \delta} \norm{\mu^{*n}}_\alpha$. Hence, when $\beta$ tends to $0$, the
last term is $O(\beta^2)$, and is negligible with respect to the first one,
of order $\beta$. If the first term is strictly negative,~\eqref{eq:eqexp}
follows for small enough $\beta$. Therefore, it suffices to show that
\begin{equation}
\label{eq:lineaire}
  \sum_g  \mu^{*n}(g)(h_\xi(g^{-1})-10\delta) \geq K>0,
\end{equation}
uniformly in $\xi \in \partial_B \Gamma$. The term on the left of this
equation looks closely like the escape rate of the random walk, which is
strictly positive. The difficulty is that we want a pointwise uniform
inequality, not an averaged version.

This kind of situation has already been encountered several times in the
literature, hence efficient tools are available. We recall for instance
Theorem 2.9 in~\cite{benoist_quint_livre}. Let $\Gamma$ be a countable group
acting continuously on a compact space $X$, let $\mu$ be a probability
measure on $\Gamma$, and let $c:\Gamma \times X \to \R$ be a continuous
cocycle, i.e., a continuous function satisfying $c(g_1 g_2, x) = c(g_1, g_2
x)+ c(g_2, x)$. Assume that $\sum_{g\in \Gamma} \mu(g) \sup_{x\in X}
\abs{c(g,x)} < \infty$. Assume also that there exists $\ell\in \R$ such that,
for any $\mu$-stationary probability measure $\nu$ on $X$, $\int_{\Gamma
\times X} c(g,x) \dd\mu(g) \dd\nu(x)= \ell$. Then, as $n \to \infty$,
\begin{equation*}
  \frac{1}{n}\sum_{g\in \Gamma}\mu^{*n}(g) c(g,x) \to \ell,
\end{equation*}
uniformly in $x\in X$.

We will apply this statement to the Busemann cocycle on $X=\partial_B
\Gamma$, given by $c_B(g, \xi)=h_\xi(g^{-1})$. In this case, for any
stationary measure $\nu$ on $\partial_B \Gamma$,~\eqref{eq:ell_integral}
states that
\begin{equation*}
  \int_X c_B(g,\xi)\dd\mu(g)\dd\nu(\xi) = \ell.
\end{equation*}
Hence, we deduce from~\cite[Theorem 2.9]{benoist_quint_livre} that
\begin{equation*}
  \frac{1}{n}\sum_{g\in\Gamma} \mu^{*n}(g) c_B(g,\xi) \to \ell,
  \text{ uniformly in }\xi\in \partial_B\Gamma.
\end{equation*}
As $\mu$ is admissible and $\Gamma$ is non-amenable, $\ell>0$. Hence, if $n$
is large enough, we have for all $\xi \in \partial_B \Gamma$ the inequality
\begin{equation*}
  \sum_g \mu^{*n}(g) h_\xi(g^{-1}) \geq n \ell/2.
\end{equation*}
This implies~\eqref{eq:lineaire} if $n$ is large enough so that $n\ell/2 >
10\delta$.
\end{proof}

The group $\Gamma$ acts continuously on the compact set $\partial_B \Gamma$.
By Zorn's lemma, there exists a nonempty compact invariant subset of
$\partial_B \Gamma$ which is minimal among such sets. It is simply called a
minimal subset of $\partial_B \Gamma$. An essential feature of such a set
$\partial_B' \Gamma$ is that, for any $x \in \partial_B' \Gamma$, then
$\Gamma x$ is dense in $\partial_B' \Gamma$. Indeed, otherwise, the closure
of $\Gamma x$ would be a nonempty compact invariant strict subset of
$\partial_B' \Gamma$, contradicting its minimality.

\begin{prop}
\label{prop:spectral_description} Let $\mu\in \boP_1^+(\alpha)$. Consider a
nonempty compact subset $\partial'_B \Gamma \subseteq \partial_B \Gamma$
which is minimal for the $\Gamma$-action. Then, for small enough $\beta$, the
operator $L_\mu$ acting on $C^\beta(\partial'_B \Gamma)$ has a simple
eigenvalue at $1$, finitely many eigenvalues of modulus $1$ (they are simple
and do not have nontrivial Jordan blocks) and the rest of its spectrum is
contained in a disk $D(0,r)$ for some $r<1$.
\end{prop}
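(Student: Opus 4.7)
\medskip

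\noindent\textbf{Proof proposal.}
The plan is a standard Hennion/Ionescu-Tulcea-Marinescu argument, followed by a maximum principle that uses the minimality of $\partial'_B \Gamma$.

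First, I would combine the Doeblin-Fortet inequality of Lemma~\ref{lem:LY} with the fact that the inclusion $C^\beta(\partial'_B\Gamma) \hookrightarrow C^0(\partial'_B\Gamma)$ is compact (Arzela-Ascoli, since $\partial'_B\Gamma$ is compact). Since $\mu$ is a probability measure, one has $L_\mu \mathbf{1} = \mathbf{1}$ and $\|L_\mu u\|_{C^0} \leq \|u\|_{C^0}$, so $\|L_\mu^n\|_{C^0} \leq 1$ and, by iterating Lemma~\ref{lem:LY}, $\|L_\mu^n\|_{C^\beta}$ stays bounded. Hennion's theorem then gives that $L_\mu$ acting on $C^\beta$ is quasi-compact with essential spectral radius at most $\rho^{1/n}<1$ and spectral radius exactly $1$. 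Consequently, outside any disk $D(0,r)$ with $\rho^{1/n}<r<1$, the spectrum consists of finitely many eigenvalues of finite multiplicity, all of modulus in $(r,1]$.

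Next, I would rule out Jordan blocks on the unit circle. If $L_\mu u = \lambda u$ with $|\lambda|=1$ and $L_\mu v = \lambda v + u$, then $L_\mu^n v = \lambda^n v + n\lambda^{n-1} u$, forcing $\|L_\mu^n v\|_{C^0} \to \infty$; this contradicts $\|L_\mu^n\|_{C^0}\leq 1$. Hence the peripheral spectrum consists of finitely many eigenvalues with trivial Jordan structure.

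The heart of the argument is then the simplicity (one-dimensional eigenspaces) of the peripheral eigenvalues, via a maximum principle combined with the minimality of $\partial'_B\Gamma$. Let $L_\mu u = \lambda u$ with $|\lambda|=1$ and $u\in C^\beta$. The pointwise inequality $L_\mu |u|\geq |L_\mu u| = |u|$ and the fact that $|u|$ attains its maximum at some $\xi_0\in\partial'_B\Gamma$ give $L_\mu |u|(\xi_0) = |u|(\xi_0)$; since $L_\mu|u|(\xi_0)=\sum_g \mu(g)|u|(g\xi_0)$ is an average bounded by $\max|u|$, equality forces $|u|(g\xi_0)=\max|u|$ for every $g\in\supp(\mu)$. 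Iterating and using that $\supp(\mu)$ generates $\Gamma$ as a semigroup (admissibility), $|u|$ is constant on the whole orbit $\Gamma\xi_0$, hence, by continuity and minimality, on $\partial'_B\Gamma$. Now if $u_1,u_2$ are two $\lambda$-eigenfunctions, $u_2$ has no zeros (its modulus is a positive constant), so $u_1-(u_1(\xi_0)/u_2(\xi_0))u_2$ is again a $\lambda$-eigenfunction that vanishes at $\xi_0$; the same modulus-constancy then forces it to be identically zero. Hence each peripheral eigenspace is one-dimensional, concluding the proof.

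The main obstacle, conceptually, is the simplicity of the $\lambda$-eigenspaces: a direct contraction argument in the spirit of~\eqref{eq:GLP} is unavailable (as explained in the text), so the proof must proceed non-constructively, combining the abstract quasi-compactness package with the convex-combination/minimality maximum principle described above. Everything else (quasi-compactness, absence of Jordan blocks) is routine once Lemma~\ref{lem:LY} is in hand.
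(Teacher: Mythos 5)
Your argument is correct and follows the paper's strategy (Hennion quasi-compactness from Lemma~\ref{lem:LY}, ruling out Jordan blocks from $C^0$-boundedness, and a maximum principle on $\partial'_B\Gamma$ using minimality and admissibility) up through the step showing that $|u|$ is constant for any peripheral eigenfunction $u$. From there the two proofs diverge in the final simplicity argument, and your route is genuinely different and slightly slicker. You observe that since any nonzero eigenfunction has constant, hence nowhere-vanishing, modulus, an eigenfunction that vanishes at a single point must vanish identically; subtracting a suitable scalar multiple of $u_2$ from $u_1$ then forces $u_1 \in \C u_2$, giving a one-dimensional eigenspace directly. The paper instead pushes the equality case of the triangle inequality $|L_\mu^k u| \leq L_\mu^k |u|$ to control the phase of $u$ on the sets $(\supp \mu^{*k})\xi$, introduces $N$ with $\mu^{*N}(e)>0$, and analyzes the limiting ``period'' sets $A_j$ to conclude that $u$ is determined by its value at one point. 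Your version avoids the phase analysis and the period structure entirely, which is more economical; the paper's version yields extra structural information (that each peripheral eigenfunction takes only values of the form $\rho^j u(\xi)$ on the pieces $\overline{A_j}$, so that the peripheral eigenvalues carry an explicit cyclic structure), but that information is not needed for the statement of the proposition.
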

\begin{proof}
In the proof, we let $L_\mu$ act on $C^\beta(\partial'_B \Gamma)$. On this
space, the estimates of Lemma~\ref{lem:LY} readily follow from the
corresponding ones on $C^\beta(\partial_B \Gamma)$. Let $n$ and $\beta$ be
given by Lemma~\ref{lem:LY}. In the inequality~\eqref{eq:LY}, the term $\rho
\norm{u}_{C^\beta}$ heuristically corresponds to a part of $L_{\mu}$ with
spectral radius at most $\rho^{1/n}$, while the term $C \norm{u}_{C^0}$ would
come from a compact part (as the inclusion of $C^\beta$ in $C^0$ is compact),
which should only add discrete eigenvalues. This intuition is made precise by
a theorem of Hennion~\cite{hennion}: It entails that~\eqref{eq:LY} implies
that the spectrum of $L_{\mu}$ in $\{z\in \C \st \abs{z}>\rho^{1/n}\}$ is
made of isolated eigenvalues of finite multiplicity.

In particular, by discreteness, there are finitely many eigenvalues of
modulus $\geq 1$, and the rest of the spectrum is contained in a disk
$D(0,r)$ for some $r<1$. As the iterates of $L_{\mu}$ on $C^0$ are uniformly
bounded, there is no eigenvalue with modulus $>1$. Moreover, the eigenvalues
of modulus $1$ have no Jordan block.

Let $u$ be a nonzero eigenfunction of $L_{\mu}$ for an eigenvalue $\rho$ of
modulus $1$. Then $v=\abs{u}$ satisfies
\begin{equation}
\label{eq:vabsu}
  v=\abs{u} =\abs{L^k_{\mu} u} \leq L^k_{\mu} \abs{u}=L^k_{\mu} v.
\end{equation}
Consider $\xi \in \partial'_B \Gamma$ such that $v(\xi)$ is maximal. Then the
previous inequality implies that $v(g\xi)=v(\xi)$ for all $g$ in the
semigroup generated by the support of $\mu$. By admissibility, this is true
for all $g\in \Gamma$. The orbit of $\xi$ is dense in $\partial'_B \Gamma$ by
minimality. Thus, $v$ is constant.

The equality in~\eqref{eq:vabsu} also implies that all the complex numbers
$u(g\xi)$ for $g\in \supp(\mu^{*k})$ have the same phase. Hence, $u$ is
constant on $(\supp \mu^{*k})\xi$.

We claim that
\begin{equation}
\label{eq:exists_good_N}
  \text{there exists $N>0$ with $\mu^{*N}(e)>0$.}
\end{equation}
Indeed, fix $x\neq e$. Then, by admissibility, there is $\ell_1>0$ such that
$\mu^{*\ell_1}(x)>0$, and $\ell_2>0$ such that $\mu^{*\ell_2}(x^{-1})>0$.
Then $\mu^{*(\ell_1+\ell_2)}(e)>0$.

Fix such an $N$. Then, for any $j$, the sequence $(\supp \mu^{*(kN+j)})(\xi)$
increases with $k$, towards a limiting set $A_j \subseteq \partial'_B
\Gamma$. The function $u$ is constant on each set $\overline{A_j}$. Moreover,
for $j\in \Z/N\Z$, one has $(\supp \mu) \cdot A_j = A_{j+1}$. As $L_\mu u =
\rho u$, it follows that $\rho u_{\restr\overline{A_j}} =
u_{\restr\overline{A_{j+1}}}$. Finally, one gets $u(x) = \rho^{j} u(\xi)$ for
$x\in \overline{A_j}$. As $\bigcup_{j} \overline{A_j} = \partial'_B \Gamma$
by minimality, this shows that $u$ is determined by its value at $\xi$, and
therefore that the $\rho$-eigenspace is at most $1$-dimensional.

For $\rho=1$, the eigenspace is exactly $1$-dimensional, as it contains the
constant functions.
\end{proof}

\begin{ex*}
On $\Gamma = F_2 \times \Z/2\Z$, consider the uniform measure $\mu$ on
$(e,1)$, $(a,1)$, $(a^{-1},1)$, $(b,1)$ and $(b^{-1},1)$, where $a$ and $b$
are the canonical generators of $F_2$. It is admissible. Let $\nu$ denote its
(unique) stationary measure on the Gromov boundary $\partial\Gamma \simeq
\partial F_2$. The Busemann boundary $\partial_B \Gamma$ is canonically
isomorphic to $\partial F_2 \times \Z/2\Z$, and the $\Gamma$ action on
$\partial_B \Gamma$ is minimal. Write $\nu_0$ and $\nu_1$ for $\nu \otimes
\delta_0$ and $\nu \otimes \delta_1$. As the measure $\mu$ exchanges the
sheets $F_2 \times \{0\}$ and $F_2 \times \{1\}$, the operator $(L_\mu)^*$
maps $\nu_0$ to $\nu_1$ and $\nu_1$ to $\nu_0$. Dually, denoting by $u_i$ the
characteristic function of $\partial F_2 \times \{i\}$ for $i=0,1$, the
operator $L_\mu$ maps $u_0$ to $u_1$ and $u_1$ to $u_0$. The constant
function $u=u_0+u_1=2$ is invariant under $L_\mu$. Moreover, this operator
also has a simple eigenvalue at $-1$, for the eigenfunction $u_0-u_1$. There
is no other eigenvalue of modulus $1$. Dually, the measure $\nu_0+\nu_1$ is
an eigenmeasure of $(L_\mu)^*$ for the eigenvalue $1$ and $\nu_0-\nu_1$ is an
eigenmeasure for the eigenvalue $-1$.
\end{ex*}

\begin{proof}[Proof of Theorem~\ref{thm:stationary_analytic}]
It suffices to prove the theorem for small enough $\beta$, as $(C^\beta)^*
\subseteq (C^{\beta'})^*$ when $\beta \leq \beta'$. We will use basic results
of spectral theory in Banach spaces, as explained for instance
in~\cite{kato_pe}.

Take $\mu_0\in \boP_1^+(\alpha)$. Proposition~\ref{prop:spectral_description}
applies to $\mu_0$. Hence, for small enough $\beta$, one can decompose the
space $C^{\beta}(\partial'_B \Gamma)$ as a direct sum of finite-dimensional
eigenspaces $E_\rho$ associated to eigenvalues $\rho$ with modulus $1$, and
an infinite-dimensional subspace $E_{<1}$ on which $L_{\mu_0}^n$ tends
exponentially fast to $0$. Let $\Pi_{\mu_0}$ be the eigenprojection for the
eigenvalue $1$, i.e., the projection on $E_1$ with kernel $E_{<1}\oplus
\bigoplus_{\rho\neq 1} E_\rho$. Then we claim that, for any $u\in
C^{\beta}(\partial'_B \Gamma)$,
\begin{equation}
\label{eq:limite}
  \frac{1}{k}\sum_{i=0}^{k-1} L_{\mu_0}^i u \to \Pi_{\mu_0} u.
\end{equation}
Indeed, this is clear on each $E_\rho$ and on $E$ separately, and the general
result follows. By~\cite{kato_pe}, the projection $\Pi_{\mu_0}$ is also given
by the formula
\begin{equation}
\label{eq:proj_spectral}
  \Pi_{\mu_0} u = \frac{1}{2\ic \pi} \int_{\boC} (zI-L_{\mu_0})^{-1} \dd z,
\end{equation}
where $\boC$ is any small enough circle around $1$.

The map $\mu \mapsto L_\mu$ is linear and continuous by
Proposition~\ref{prop:action_continue} (and therefore analytic). For $\mu$
close enough to $\mu_0$, the operator $L_\mu$ is close to $L_{\mu_0}$. Hence,
the spectral description given by Proposition~\ref{prop:spectral_description}
for $L_{\mu_0}$ persists for $L_\mu$ by spectral continuity results for
isolated simple eigenvalues, see~\cite{kato_pe}: the operator $L_\mu$ has a
unique eigenvalue close to $1$, it is simple, and the corresponding spectral
projection $\Pi_{\mu}$ is given by the formula~\eqref{eq:proj_spectral} (with
$\mu_0$ replaced by $\mu$). Moreover, $\mu\mapsto \Pi_{\mu}$ is analytic.
When $\mu\in \boP_1^+(\alpha)$, the operator $L_\mu$ is a convolution
operator with a probability measure, hence $L_\mu 1 = 1$. In particular, the
corresponding eigenvalue is $1$.

Let $\xi_0 \in \partial'_B \Gamma$. We define a linear form $\Phi(\mu)$ on
$C^\beta(\partial'_B \Gamma)$, for $\mu$ close to $\mu_0$, by $\Phi(\mu)(u) =
(\Pi_\mu u)(\xi_0)$. Then $\mu \mapsto \Phi(\mu)$ is analytic from $U$ to
$(C^\beta(\partial'_B \Gamma))^*$. As H\"older functions on $\partial_B
\Gamma$ restrict to H\"older functions on $\partial'_B \Gamma$, $\Phi(\mu)$
can be considered as a linear form on $C^\beta(\partial_B \Gamma)$.
Equivalently, we implicitly compose $\Phi(\mu)$ with the continuous inclusion
$(C^\beta(\partial'_B \Gamma))^* \subseteq (C^\beta(\partial_B \Gamma))^*$.
Hence, $\Phi(\mu)\in (C^\beta(\partial_B \Gamma))^*$.

It remains to show that the linear form $\Phi(\mu)$ is the integration
against a stationary probability measure on $\partial'_B \Gamma$ when $\mu\in
\boP_1^+(\alpha)$. In this case, $\Phi(\mu)(u)$ is again given by the
formula~\eqref{eq:limite} (with $\mu_0$ replaced by $\mu$), as the above
discussion also applies to $\mu$. For any nonnegative H\"older function $u$,
one gets $0\leq \Phi(\mu)(u) \leq \norm{u}_{C^0}$. By Stone-Weierstrass
theorem, H\"older functions are dense in $C^0$. We deduce that $\Phi(\mu)$
extends to a positive linear form on continuous functions, i.e., a positive
measure on $\partial'_B \Gamma$. As $\Phi(\mu)1=1$, it is a probability
measure. Finally, as $\Phi(\mu)(L_\mu u) = \Phi(\mu)(u)$, it is stationary.
\end{proof}

\section{Analyticity of the entropy}
\label{sec:entropy}

Let $\Gamma$ be a nonelementary hyperbolic group. We fix once and for all a
finite subset $F$ of $\Gamma$. We denote by $\boP(F)$ the set of functions
$\mu:F\to \C$, and by $\boP_1^+(F)$ its subset made of admissible probability
measures. We also fix a reference measure $\mu_0\in \boP_1^+(F)$. Note that
the support of $\mu_0$ may be a proper subset of $F$. In this section, we
prove the entropy part of Theorem~\ref{thm:main}:

\begin{thm}
\label{thm:entropy_analytic} The map $\mu\mapsto h(\mu)$, associating to $\mu
\in \boP_1^+(F)$ its entropy, extends to an analytic map on a neighborhood of
$\mu_0$ in $\boP(F)$.
\end{thm}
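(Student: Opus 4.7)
The plan is to leverage the integral formula~\eqref{eq:h_integral}, which here reads
\begin{equation*}
  h(\mu) = -\sum_{g \in F} \mu(g) \int_{\partial_B \Gamma} \log K_\mu^B(g^{-1}, \xi) \dd\nu_\mu(\xi).
\end{equation*}
Since $F$ is finite, it suffices to prove that for each fixed $g \in F$, the integral $\int_{\partial_B \Gamma} \log K_\mu^B(g^{-1}, \xi) \dd\nu_\mu(\xi)$ extends analytically to a neighborhood of $\mu_0$ in $\boP(F)$. Theorem~\ref{thm:stationary_analytic} already provides an analytic family $\mu \mapsto \nu_\mu$ of stationary functionals in $(C^\beta(\partial_B \Gamma))^*$, supported on the minimal set $\partial'_B \Gamma$. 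The remaining, and much harder, task is therefore to exhibit the Martin kernel $K_\mu^B$ itself as an analytic function of $\mu$, valued in a suitable Banach space of sections over $\Gamma \times \partial'_B \Gamma$, with values bounded uniformly away from $0$ and $\infty$ on compact sets so that composition with $\log$ is analytic.

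Following the strategy advocated in the introduction, I would realize the whole family $(x,\xi) \mapsto K_\mu^B(x,\xi)$ as the unique fixed point of a nonlinear operator $T_\mu$ acting on an appropriate Banach space $\mathcal{B}$, with $T_\mu$ depending analytically on $\mu$. The geometric input driving the construction is a strong (exponential) Ancona inequality of the form
\begin{equation*}
  K_\mu^B(x,\xi) = K_\mu(x,y)\, K_\mu^B(y,\xi)\, \bigl(1+ O(e^{-\kappa n})\bigr),
\end{equation*}
valid when $y$ lies approximately on a geodesic from $x$ to $\pi_B(\xi)$ at distance $n$ from both. This approximate multiplicative cocycle structure is precisely what will allow $T_\mu$ to be built so that its Fr\'echet differential at the fixed point $K_{\mu_0}^B$ has spectral radius strictly less than one on $\mathcal{B}$: advancing the base point $x$ one step along the geodesic toward $\xi$ produces exponential contraction in the error. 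Analytic dependence of $T_\mu$ on $\mu$ will follow in turn from analytic dependence of the Green function, since $G_\mu = \sum_n Q_\mu^n$ is the resolvent at $1$ of the Markov operator $Q_\mu$, whose $\ell^2$-spectral radius is strictly less than one by non-amenability; hence $G_\mu$ extends to an analytic $\Gamma\times\Gamma$-indexed family on a complex neighborhood of $\mu_0$.

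Once this is set up, the analytic implicit function theorem in Banach spaces, justified using the machinery recalled in Paragraph~\ref{subsec:analytic}, produces a unique analytic family of fixed points $\mu \mapsto K_\mu^B \in \mathcal{B}$ on a neighborhood of $\mu_0$. Restriction to $\{g^{-1}\}\times \partial'_B\Gamma$ (which embeds continuously into $C^\beta(\partial'_B\Gamma)$ by the structure of $\mathcal{B}$), composition with $\log$, and pairing with the analytic functional $\nu_\mu$ from Theorem~\ref{thm:stationary_analytic} produce an analytic scalar function of $\mu$ for each $g \in F$. Summing the finite number of terms yields the asserted analyticity of $h(\mu)$ on a neighborhood of $\mu_0$ in $\boP(F)$.

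The principal obstacle is the design of $\mathcal{B}$ and $T_\mu$. One needs simultaneously that $K_{\mu_0}^B \in \mathcal{B}$, that $T_\mu$ preserves $\mathcal{B}$ and depends analytically on $\mu$, and most critically that the differential of $T_{\mu_0}$ at its fixed point has a spectral gap. The spectral gap is where the strong exponential form of Ancona's inequalities enters, and this is also the point at which finite support becomes essential: sharp exponential Ancona estimates, as discussed in the remark following Theorem~\ref{thm:CLT}, are currently only available for admissible finitely supported measures, and they are what will control the tail of the linearization and force the contraction required by the implicit function theorem.
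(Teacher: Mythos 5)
Your outline reproduces the paper's strategy: start from the formula~\eqref{eq:h_integral}, pair the (analytic) stationary functional from Theorem~\ref{thm:stationary_analytic} against $\log K_\mu^B$, and reduce everything to showing that $(x,\xi)\mapsto K_\mu^B(x,\xi)$ depends analytically on $\mu$ by exhibiting it as a fixed point of a nonlinear operator whose linearization has a spectral gap coming from strong Ancona inequalities, then invoking the analytic implicit function theorem. This is exactly what the paper does; the space $\mathcal{B}$ you leave undesigned is the paper's $\boC^\beta$ (Hölder-type norms on functions of $(x,\xi)$ with $x\in\Lambda(\xi)$, weighted by $G_{\bar\mu}(x,e)$), and $T_\mu$ is the projectivized transfer operator $\boL_\mu$ built from relative Green functions $G_\mu(\cdot,\cdot;\Lambda'(\xi))$.

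There is, however, a genuine gap in your last step. The analytic implicit function theorem produces an analytic family $\mu\mapsto f_\mu\in\mathcal{B}$ of fixed points of $T_\mu$ with $f_{\mu_0}=K_{\mu_0}^B$, but for real admissible $\mu\neq\mu_0$ this does \emph{not} automatically identify $f_\mu$ with the Martin kernel $K_\mu^B$: the implicit function theorem only guarantees uniqueness of the fixed point in a small neighborhood of $K_{\mu_0}^B$, and you have no a priori control saying that $K_\mu^B$ lies in that neighborhood. You silently label the output of the theorem as $K_\mu^B$, which assumes the conclusion. The paper resolves this with two additional lemmas: first, $K_\mu^B$ is the \emph{unique positive} fixed point of $\boL_\mu$ in $\boC^\beta$ (Lemma~\ref{lem:Kmu_pointfixe_unique}, proved by iterating $\boL_\mu$ and using Proposition~\ref{prop:ineq_ancona_strong}); second, the implicitly defined $f_\mu$ is everywhere positive for $\mu$ real admissible close to $\mu_0$ (Lemma~\ref{lem:fmu_positive}, which again leans on the Ancona estimates to bound $\boM_\mu f_\mu$ from below by a multiple of $G_\mu(\cdot,e)$). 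Only the combination of these two facts lets one conclude $f_\mu=K_\mu^B$ and feed the result into the entropy formula. Without this identification step, the argument is incomplete even granting the construction of $\mathcal{B}$ and $T_\mu$ that you defer.
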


The idea is to start from~\eqref{eq:h_integral}, and to show that the lift
$K_\mu^B(x,\xi)$ of the Martin kernel to the Busemann boundary depends
analytically on $\mu$.

\subsection{Strong Ancona inequalities}

A fundamental tool to study the Green function and the Martin kernel in
hyperbolic groups is an inequality due to Ancona, showing that the Green
function is essentially multiplicative along geodesics. In other words,
typical trajectories of the random walk tend to follow geodesics. We will use
repeatedly a quantitative version of such inequalities, that we describe in
this paragraph. As it should hold uniformly on a neighborhood of $\mu_0$, we
first describe such a convenient neighborhood.

\medskip

As $\Gamma$ is non-amenable, the spectral radius of the convolution operator
by $\mu_0$ on $\ell^2(\Gamma)$ is $<1$. We fix $\epsilon_0>0$ small enough so
that the measure $\bar\mu = \mu_0+\epsilon_0\sum_{g\in F}\delta_g$ also has a
spectral radius $<1$.

We denote by $\bar U \subseteq \boP(F)$ the set of functions supported on $F$
with $\abs{\mu(g)-\mu_0(g)} \leq \epsilon_0$ for all $g\in F$, and by $U$ its
interior. We also write $\bar U^+$ for the nonnegative elements of $\bar U$,
and $\bar U^+_1$ for the admissible probability measures in $\bar U$. As all
functions in $\bar U$ are dominated by $\bar\mu$, they all have a spectral
radius $<1$, uniformly. Moreover, the Green function (defined
in~\eqref{eq:defGmu}) of $\mu\in \bar U^+$ satisfies
\begin{equation*}
  G_{\mu}(x,y) \leq G_{\bar\mu}(x,y)
\end{equation*}
for all $x,y\in \Gamma$.

If $\epsilon_0$ is small enough, all measures $\mu\in \bar U^+$ satisfy
$\mu(g) \geq \mu_0(g)/2$ for all $g\in \supp \mu_0$. As $\mu_0$ is
admissible, it follows that $G_\mu(x,y) \geq e^{-C d(x,y)}$ for all $x,y\in
\Gamma$, uniformly in $\mu$. As trajectories from $x$ to $y$ and trajectories
from $y$ to $z$ can be concatenated to form trajectories from $x$ to $z$, we
deduce the following Harnack inequalities, uniformly in $x,y,z\in \Gamma$ and
$\mu \in\bar U^+$:
\begin{equation}
\label{eq:harnack}
  e^{- C d(x,y)} \leq \frac{G_\mu(x,z)}{G_\mu(y,z)} \leq e^{C d(x,y)},\quad
  e^{- C d(y,z)} \leq \frac{G_\mu(x,y)}{G_\mu(x,z)} \leq e^{C d(y,z)}.
\end{equation}

For $\mu \in \bar U^+$, we say that a function $u:\Gamma \to \R$ is
$\mu$-harmonic in a domain $A$ of $\Gamma$ if, for all $x\in A$,
\begin{equation*}
  u(x) = \sum_g \mu(g) u(xg).
\end{equation*}
For instance, the Green function $x\mapsto G_\mu(x,y)$ is $\mu$-harmonic on
$\Gamma \setminus \{y\}$.

\medskip

Consider a geodesic $\gamma$ in $\Gamma$, with length $D \geq 0$, from
$x_0=\gamma(0)$ to $y_0=\gamma(D)$. We define a domain of points close to the
beginning of $\gamma$ as $I^-(\gamma)=\{x\in \Gamma \st
\gromprod{x_0}{x}{y_0} \geq D-10\delta\}$. In the same way, let
$I^+(\gamma)=\{y\in \Gamma \st \gromprod{y_0}{y}{x_0} \geq D-10\delta\}$.

The quantitative Ancona inequalities we will use are the following:
\begin{prop}
\label{prop:ineq_ancona_strong} There exist $C>0$ and $D_0$ such that, for
all $\mu\in \bar U^+$, the following holds. Let $\gamma$ be a geodesic
segment in $\Gamma$, with length $D \geq D_0$. Let $u$ and $v$ be two
nonnegative functions on $\Gamma$ which satisfy the following outside of
$I^+(\gamma)$: they are strictly positive, $\mu$-harmonic, and bounded by a
finite linear combination of functions $G_{\mu}(\cdot, y)$ where $y\in
I^+(\gamma)$. Then, for all $x, x'\in I^-(\gamma)$
\begin{equation*}
  \abs*{ \frac{u(x)/u(x')}{v(x)/v(x')} -1} \leq Ce^{-C^{-1} D}.
\end{equation*}
\end{prop}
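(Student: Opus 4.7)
The proposition is a quantitative (exponential) strengthening of Ancona's classical inequality, and I would prove it in two stages, following the strategy originating in \cite{ancona2} with the quantitative refinement by Doeblin-style iteration.

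\textbf{Stage 1 (uniform weak Ancona).} First I would establish the non-quantitative estimate: there is $C$, uniform in $\mu \in \bar U^+$, such that whenever $z$ lies on a geodesic from $x$ to $y$ with $\min(d(x,z),d(z,y)) \geq D_0$,
$G_\mu(x,y) \leq C\, G_\mu(x,z)\, G_\mu(z,y)$.
The idea is probabilistic and exploits $\delta$-hyperbolicity: any sample path of the $\mu$-walk from $x$ to $y$ must, with overwhelming probability, pass through a bounded neighbourhood $N(z)$ of $z$, since trajectories that stay far from a geodesic connecting $x$ and $y$ correspond to a long detour and therefore carry negligible Green mass. Quantitatively, the domination $\mu \leq \bar\mu$ gives $G_\mu \leq G_{\bar\mu}$, and because $\bar\mu$ has spectral radius $<1$, $G_{\bar\mu}$ decays exponentially off-diagonal, which controls the detour contributions. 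Decomposing trajectories at the first visit to $N(z)$ via the strong Markov property, and combining with the uniform Harnack inequality \eqref{eq:harnack}, yields the weak Ancona with a constant independent of $\mu \in \bar U^+$.

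\textbf{Stage 2 (Doeblin iteration along $\gamma$).} Subdivide $\gamma$ into $N \approx D/k$ consecutive segments of length $k$, for a large $k$ depending only on $\delta$, and for each segment $i$ pick an ``exit surface'' $S_i$ (e.g.\ a discrete sphere around the midpoint of the $i$-th subsegment) which separates $I^-(\gamma)$ from $I^+(\gamma)$. Since $u$ is $\mu$-harmonic in the complement of $I^+(\gamma)$, optional stopping at the first hit of $S_i$ gives $u(x) = \sum_{w \in S_i} p^i_x(w)\, u(w)$ for $x \in I^-(\gamma)$, where $p^i_x$ is the hitting distribution of $S_i$ started at $x$. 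The key point is that, by weak Ancona, for any $x, x' \in I^-(\gamma)$ the measures $p^i_x$ and $p^i_{x'}$ admit a common positive minorant of mass $\geq \eta$, with $\eta > 0$ uniform in $\mu$, in $i$, and in the base-points; this is a Doeblin condition inherited from the fact that, up to multiplicative constants, both measures are proportional to $w \mapsto G_\mu(z_i,w)$ near the midpoint $z_i$.

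\textbf{Stage 3 (contraction of the ratio).} Setting $f = u/v$ and using $v(x) = \sum p^i_x(w) v(w)$, one rewrites $f(x) = \sum_w q^i_x(w) f(w)$ for the reweighted probability $q^i_x(w) = p^i_x(w) v(w)/v(x)$. The Doeblin minorization transfers to the $q^i_x$ (with a possibly smaller $\eta$, still uniform, controlled by the Harnack quotient of $v$ on $S_i$). Hence each bottleneck contracts the oscillation of $f$ over $I^-(\gamma)$ by a factor $1-\eta$; iterating across the $N$ independent bottlenecks yields oscillation bounded by $(1-\eta)^N \leq e^{-D/C}$, which after rearranging is exactly the bound $|(u(x)/u(x'))/(v(x)/v(x')) - 1| \leq C e^{-D/C}$.

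\textbf{Main obstacle.} The delicate part is Stage 1: ensuring the weak Ancona constant depends on $\mu$ only through the a priori data already captured by $\bar U^+$ (namely the domination by $\bar\mu$ and the lower bound $\mu(g) \geq \mu_0(g)/2$ on $\supp \mu_0$). Once this uniformity is secured, Stage 2 is a robust coupling argument whose exponential rate is fixed by $\eta$ and $k$, and the passage to the ratio form in Stage 3 is essentially bookkeeping with the Harnack inequality.
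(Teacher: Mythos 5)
Your three-stage strategy (uniform weak Ancona, then iterated Doeblin contraction across a sequence of barriers, then passage to ratios) is indeed the strategy underlying the paper's proof, which simply delegates to the arguments of Gou\"ezel--Lalley and observes (a) that the spectral radius bound on $\bar U^+$ makes the relevant estimates uniform in $\mu$, and (b) that the symmetry hypothesis there is irrelevant here. So you have correctly identified the route. However, there are two points in Stages~2--3 where the sketch would fail as written, and these are precisely the places where the hyperbolic geometry does real work.

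First, a ``discrete sphere around the midpoint'' does \emph{not} separate $I^-(\gamma)$ from $I^+(\gamma)$ in a hyperbolic group unless the group is virtually free: a random walk path can simply go around it. The separating sets must be exit sets from half-space-type domains determined by a Gromov-product inequality (in the paper's notation, the $\Lambda'_k(\xi)$). This is not a cosmetic change: these exit sets are \emph{infinite}, which is what makes the next point subtle.

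Second, the claim that the Doeblin minorization transfers to the reweighted kernels $q^i_x$ ``with a smaller $\eta$ controlled by the Harnack quotient of $v$ on $S_i$'' breaks down because $S_i$ has unbounded diameter, so the Harnack quotient of $v$ over $S_i$ is not bounded. What one actually has to prove is that the hitting measure $p^i_x$ places a uniformly positive fraction of its mass on a \emph{bounded} region $B_i$ near the midpoint $z_i$ (uniformly in $x\in I^-(\gamma)$ and $\mu\in\bar U^+$); this requires weak Ancona together with the a priori domination $v\leq C\sum G_\mu(\cdot,y_j)$ to control the far-field contribution to the normalization $v(x)=\sum_w p^i_x(w)v(w)$. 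Only on $B_i$ do you get both the common minorant for $p^i_x$ and the Harnack comparability of $v$, and then the Doeblin argument goes through. The cone/Hilbert-metric formulation used in Gou\"ezel--Lalley handles these tail contributions more transparently because it never normalizes to a probability measure, and I would recommend it for a clean write-up. Stage~1 (uniformity of weak Ancona over $\bar U^+$ from $G_\mu\leq G_{\bar\mu}$, the spectral radius bound, and the uniform Harnack inequality) is fine, and is exactly the observation the paper makes about Lemma~4.4 of Gou\"ezel--Lalley.
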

\begin{proof}
This follows from the arguments in~\cite{gouezel_lalley}. Indeed, the
conclusion of Lemma 4.4 there is a consequence of the fact that the spectral
radius is bounded away from $1$ on $\bar U^+$. Then all the arguments up to
the end of Theorem 4.6 apply verbatim, even for general $\mu$-harmonic
functions. Note that, while it is assumed in the statements
of~\cite{gouezel_lalley} that the measure is symmetric, this is only used in
the proof of Lemma 4.4 there (which is trivial in our context, even without
symmetry).
\end{proof}

One important tool in this proof is the \emph{Ancona inequality} (Theorem 4.1
in~\cite{gouezel_lalley}), saying that the Green function is multiplicative
up to a constant along geodesics. We will use the following version:
\begin{lem}
\label{lem:Ancona} For any $K>0$, there exists $C>0$ with the following
property. Let $\mu\in \bar U^+$. Consider three points $x$, $y$, $z$ in
$\Gamma$ such that $z$ is within distance $K$ of a geodesic segment between
$x$ and $y$. Then
\begin{equation*}
  C^{-1} G_\mu(x,z) G_\mu(z,y) \leq G_{\mu}(x,y)\leq C G_\mu(x,z) G_\mu(z,y).
\end{equation*}
\end{lem}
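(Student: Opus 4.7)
The proof splits naturally into the lower and upper bounds, of quite different character.

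For the lower bound, I would use a first-passage decomposition. Let $z'$ be a point on the geodesic $[x,y]$ with $d(z,z')\leq K$. Writing $F_\mu(x,z')$ for the generating function of first visits of the walk to $z'$ starting from $x$, the standard renewal identity gives $G_\mu(x,z')=F_\mu(x,z')G_\mu(z',z')$, and concatenating a first-visit trajectory from $x$ to $z'$ with any trajectory from $z'$ to $y$ yields
\begin{equation*}
  G_\mu(x,y)\geq F_\mu(x,z')G_\mu(z',y)=\frac{G_\mu(x,z')G_\mu(z',y)}{G_\mu(z',z')}.
\end{equation*}
Because every $\mu\in\bar U^+$ is dominated by $\bar\mu$ and $\bar\mu$ has spectral radius $<1$, one has $G_\mu(z',z')\leq G_{\bar\mu}(e,e)<\infty$ uniformly. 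The Harnack estimate~\eqref{eq:harnack} lets us replace $z'$ by $z$ (losing a factor depending only on $K$), giving the lower bound.

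For the upper bound, which is the serious direction of Ancona's inequality, I would invoke the argument of~\cite{gouezel_lalley}, Theorem~4.1, after verifying that its hypotheses hold uniformly on $\bar U^+$. The geometric idea is to place a ``barrier'' $B$ around $z$ transverse to the geodesic; by thin-triangles ($\delta$-hyperbolicity), every trajectory from $x$ to $y$ must enter $B$. Decomposing according to the first and last visit to $B$, one bounds $G_\mu(x,y)$ by a sum
\begin{equation*}
  G_\mu(x,y)\leq \sum_{b,b'\in B}G_\mu(x,b)\,H_\mu(b,b')\,G_\mu(b',y),
\end{equation*}
where $H_\mu(b,b')$ is the contribution of paths from $b$ to $b'$ staying in $B$. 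The key quantitative input is that the spectral radius of the walk restricted to paths avoiding a large ball around a point on the geodesic is uniformly bounded away from $1$; this follows from the spectral radius bound for $\bar\mu$ and is precisely why $\bar U$ was defined via $\bar\mu$. Using Harnack on $B$ to decouple factors and comparing with $G_\mu(x,z)G_\mu(z,y)$, one concludes.

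The main obstacle is the upper bound, and specifically the \emph{uniformity} of the constant $C$ over $\mu\in\bar U^+$. All the purely geometric ingredients (thin triangles, definition of the barrier $B$, the fact that every $x$-to-$y$ trajectory must cross it) depend only on the hyperbolicity constant $\delta$ of $\Gamma$, not on $\mu$. The only analytic input is the exponential decay of taboo Green functions across $B$, and this is controlled uniformly by the spectral radius of $\bar\mu$, which was built into the definition of $\bar U$ precisely to obtain such uniformity. Thus, after that observation, the proof from~\cite{gouezel_lalley} applies verbatim.
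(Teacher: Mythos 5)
Your proof takes a genuinely different route from the one in the paper, and arguably a more honest one logically. The paper's displayed proof of the lemma \emph{derives} it from the strong Ancona inequalities of Proposition~\ref{prop:ineq_ancona_strong} (by applying that proposition to $u = G_\mu(\cdot,y)$ and $v = G_\mu(\cdot,y_0)$ for suitably placed points $x_0, y_0$ on the geodesic near $z$), while explicitly flagging that this is ``not the logical order'': Proposition~\ref{prop:ineq_ancona_strong} is itself proved in~\cite{gouezel_lalley} using this very lemma. You instead sketch the actual logical route: the lower bound by first-visit decomposition plus Harnack, and the upper bound by citing Gouezel--Lalley's Theorem~4.1 with uniformity over $\bar U^+$ secured by the uniform spectral-radius bound built into the definition of $\bar U$. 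That matches precisely what the paper does in its justification of Proposition~\ref{prop:ineq_ancona_strong} (``the conclusion of Lemma~4.4 there is a consequence of the fact that the spectral radius is bounded away from $1$ on $\bar U^+$; then all the arguments\dots apply verbatim''). What the paper's illustration buys is a demonstration of how to use Proposition~\ref{prop:ineq_ancona_strong}, which is the main workhorse of Section~\ref{sec:entropy}; what your version buys is a self-contained account of the lower bound and a non-circular chain of references.

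One small imprecision, not a gap: in the upper bound you describe the quantitative input as ``the spectral radius of the walk restricted to paths avoiding a large ball around a point on the geodesic is uniformly bounded away from $1$.'' The input actually used in~\cite{gouezel_lalley} is the uniform exponential decay $G_\mu(x,y)\leq C\rho^{d(x,y)}$ with $\rho<1$ (equivalent to the spectral radius of $\bar\mu$ being $<1$), fed into an inductive thin-triangles argument; the paraphrase is loose but the key point --- that the only $\mu$-dependent analytic input is controlled by $\bar\mu$ --- is correct and is exactly what yields the uniform constant.
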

To illustrate the use of Proposition~\ref{prop:ineq_ancona_strong}, let us
explain how, conversely, it implies Ancona inequalities. We stress that this
is \emph{not} the logical order, as the proof of
Proposition~\ref{prop:ineq_ancona_strong} uses the lemma.
\begin{proof}
By Harnack inequalities, it suffices to prove the lemma when $K=0$, i.e., $z$
belongs to a geodesic between $x$ and $y$.

If $z$ is $D_0$-close to $x$ or $y$, then Ancona inequalities are trivial
thanks to Harnack inequalities~\eqref{eq:harnack}. Otherwise, along a
geodesic from $x$ to $y$ containing $z$, consider the point $x_0$ between $x$
and $z$ at distance $D=D_0$ of $z$, and similarly the point $y_0$ between $z$
and $y$ with $d(z,y_0)=D_0$. Let $\gamma$ be the restriction of the geodesic
to $[x_0 y_0]$. Then $x, x_0 \in I^-(\gamma)$ and $y,y_0 \in I^+(\gamma)$.
The functions $u = G_\mu(\cdot,y)$ and $v=G_\mu(\cdot,y_0)$ satisfy the
assumptions of the proposition. We deduce
\begin{equation*}
  \abs*{ \frac{G_\mu(x,y)/G_\mu(x_0, y)}{G_\mu(x, y_0)/G_\mu(x_0,y_0)}-1}\leq Ce^{-C^{-1}D_0}.
\end{equation*}
In particular, the fraction is bounded. As $x_0$ and $y_0$ are within bounded
distance of $z$, this fraction coincides, up to a multiplicative constant,
with
\begin{equation*}
  \frac{G_\mu(x,y)/ G_\mu(z,y)}{G_\mu(x,z) / G_\mu(z,z)}.
\end{equation*}
Its boundedness shows that $G_\mu(x,y) \leq C G_\mu(x,z) G_\mu(z,y)$.
Conversely, the other inequality $G_\mu(x,y) \geq C^{-1}G_\mu(x,z)
G_\mu(z,y)$ is trivial since trajectories from $x$ to $z$ and from $z$ to $y$
can be concatenated to form trajectories from $x$ to $y$.
\end{proof}

\begin{rmk}
\label{rmq:Martin_borne} Proposition~\ref{prop:ineq_ancona_strong} applies to
the Martin kernel $x\mapsto K_\mu(x,\xi)$ when $\xi \notin \overline{\Gamma
\setminus I^+(\gamma)}$. It is $\mu$-harmonic and positive everywhere, what
remains to be checked is that it is bounded by $CG_{\mu}(x,y_0)$. Let $y_n\in
I^+(\gamma)$ tend to $\xi$. Then a geodesic from $x$ to $y_n$ passes within
bounded distance of $y_0$, uniformly in $n$ and in $x$ in $\Gamma \setminus
I^+(\gamma)$. Thus, Ancona inequalities give
\begin{equation*}
  G_{\mu}(x, y_n)\leq C G_{\mu}(x, y_0)G_{\mu}(y_0,y_n).
\end{equation*}
By Harnack inequalities, $G_\mu(y_0,y_n) \leq C G_\mu(e,y_n)$. Hence,
\begin{equation*}
  G_\mu(x,y_n)/G_\mu(e,y_n) \leq C G_{\mu}(x, y_0).
\end{equation*}
Letting $n$ tend to infinity, we obtain $K_\mu(x,\xi) \leq C G_{\mu}(x, y_0)$
as desired.
\end{rmk}

When $A$ is a subset of $\Gamma$ and $\mu \in \bar U^+$, we define the
relative Green function $G_\mu(x,y; A)$ as the sum of $\mu$-probabilities of
paths from $x$ to $y$ that stay in $A$ except possibly at the first and last
step, i.e.,
\begin{equation*}
  G_\mu(x,y; A) = \sum_{n\geq 0} \sum_{\substack{x_0=x, x_1,\dotsc, x_n=y\\ x_1,\dotsc, x_{n-1}\in A}}
     \mu(x_0^{-1}x_1) \dotsm \mu(x_{n-1}^{-1}x_n).
\end{equation*}
The function $x\mapsto G_\mu(x,y;A)$ is harmonic on the set of points $x$
that are different from $y$ and can not jump outside of $A$ in one step. It
is bounded by $G_\mu(x,y)$. Hence, Proposition~\ref{prop:ineq_ancona_strong}
applies to relative Green functions on suitable domains. One has $G_\mu(x,y;
\Gamma)=G_\mu(x,y)$.

\subsection{The Martin kernel as a fixed point}

To prove that the entropy depends analytically on the measure,
using~\eqref{eq:h_integral}, we should show that the lifted Martin kernel
$K_\mu^B$ depends analytically on $\mu$. As we explained in the introduction,
our strategy is to exhibit $K_\mu^B$ as the fixed point of a suitable
operator, and conclude using the implicit function theorem.

We will now introduce this operator, first formally. Let $N>0$ be large
enough, only depending on the subset $F$ of $\Gamma$ in which the measures we
consider are supported. To $\xi \in
\partial_B\Gamma$, we associate a point $a(\xi)\in\Gamma$ with $\abs{a(\xi)}=N$ which,
heuristically, points in the direction of $\xi$. We require that
$h_\xi(a(\xi))=-N$ and that $a(\xi)$ only depends on the restriction of
$h_\xi$ to $B(e,N)$. We also associate to $\xi$ a set of points in $\Gamma$
denoted by $\Lambda(\xi)=\Lambda_0(\xi)$, which only depends on the
restriction of $h_\xi$ to $B(e,N)$ and contains the points ``far away from
$\xi$''. More precisely, let
\begin{equation*}
  \Lambda(\xi) = \{g\in \Gamma \st \gromprod{a(\xi)}{g}{e} \leq N/4\}.
\end{equation*}
If $N$ is large enough, $F$ and $F^{-1}$ are included in $\Lambda(\xi)$ for
every $\xi$. We will also need a slightly larger set
\begin{equation*}
  \Lambda'(\xi) = \{g\in \Gamma \st \gromprod{a(\xi)}{g}{e} \leq N/2\}.
\end{equation*}

Given $\xi \in \partial_B\Gamma$, there is now a canonical way to go towards
$\xi$ at infinity, starting from the identity $e$. First, we jump to
$a_1=a(\xi)$. Then, to $a_2=a_1 \cdot a( a_1^{-1}\xi)$. Then, to $a_3=a_2
\cdot a(a_2^{-1}\xi)$, and so on. To this process correspond nested sets
\begin{equation*}
  \Lambda(\xi) = \Lambda_0(\xi)\subset \Lambda_1(\xi)=a_1 \Lambda(a_1^{-1}\xi)
  \subset \Lambda_2(\xi) = a_2 \Lambda(a_2^{-1}\xi) \subset \dotsc
\end{equation*}
covering more and more the group. The successive boundaries of these sets
form a sequence of barriers between $e$ and $\xi$. Let also
$\Lambda'_k(\xi)=a_k \Lambda'(a_k^{-1} \xi)$. The complements of
$\Lambda_i(\xi)$ and $\Lambda'_i(\xi)$ are essentially horoballs centered at
$\pi_B \xi$, at distance respectively $iN+N/4$ and $iN+N/2$ of $e$.

Let $\mu \in \bar U^+$. Consider a nonnegative function $u$ on $\Gamma$ which
is, on $\Lambda'(\xi)$, positive, harmonic, and bounded by a finite linear
combination of functions $G_\mu(\cdot, y_i)$ with $y_i \notin \Lambda'(\xi)$.
It is classical that such a function satisfies
\begin{equation}
\label{eq:req_rec}
  u(x) = \sum_{y\notin \Lambda'(\xi)} G_\mu(x,y; \Lambda'(\xi)) u(y).
\end{equation}
Indeed, by harmonicity, $u(x) = \sum_y \mu(x^{-1}y) u(y)$. One can then apply
again this formula to all the $y$ which are still in $\Lambda'(\xi)$, and
repeat this algorithm up to time $n$. If $\mu$ is a probability measure, this
amounts to considering the random walk starting from $x$, stopped once it
exits $\Lambda'(\xi)$, and saying that the average value of $u$ along this
process does not change by harmonicity. We get a formula $u(x) = \sum_y
F_n(y) u(y)$, where for $y\notin \Lambda'(\xi)$
\begin{equation*}
  F_n(y)=\sum_{i\leq n} \sum_{\substack{x_0=x, x_1,\dotsc, x_i=y\\x_1,\dotsc, x_{i-1} \in \Lambda'(\xi)}}
    \mu(x_0^{-1}x_1) \dotsm \mu(x_{i-1}^{-1}x_i)
\end{equation*}
converges to $G_\mu(x,y; \Lambda'(\xi))$ when $n$ tends to $\infty$.  On the
other hand, for $y\in\Lambda'(\xi)$,
\begin{equation*}
  F_n(y) = \sum_{\substack{x_0=x, x_1,\dotsc, x_n=y\\x_1,\dotsc, x_{n-1}
    \in \Lambda'(\xi)}}
    \mu(x_0^{-1}x_1) \dotsm \mu(x_{n-1}^{-1}x_n).
\end{equation*}
One should show that the contribution of these points to the equality $u(x) =
\sum_y F_n(y) u(y)$ tends to $0$ when $n$ tends to $\infty$. This follows
from the domination condition $u(x) \leq C \sum G_\mu(x,y_i)$, since this
contribution is then bounded by the tails of the series defining the Green
function, which tend to $0$ as the series is finite.

\begin{rmk}
\label{rmk:Kmu_rec} The lifted Martin kernel $K_\mu^B(\cdot,\xi)=K_\mu(\cdot,
\pi_B \xi)$ is bounded on $\Lambda'(\xi)$ by $C G_\mu(\cdot, y_0)$ for any
$y_0 \notin \Lambda'(\xi)$, see Remark~\ref{rmq:Martin_borne}. Hence, it
satisfies~\eqref{eq:req_rec}.
\end{rmk}

In the formula~\eqref{eq:req_rec}, the points $y\notin \Lambda'(\xi)$ with a
nonzero coefficient $G_\mu(x,y; \Lambda'(\xi))$ are within bounded distance
of $\Lambda'(\xi)$, as the walk has bounded jumps. In particular, if $N$ is
large enough, they are all included in $\Lambda_1(\xi)$. Thus, this formula
can also be written as
\begin{equation*}
  u(x) = \sum_{y\in \Lambda_1(\xi)\setminus \Lambda'(\xi)} G_\mu(x,y; \Lambda'(\xi)) u(y).
\end{equation*}
This shows how the values of $u$ on $\Lambda_0(\xi)$ are determined by its
values on $\Lambda_1(\xi)$. This formula entails that harmonic functions are
fixed point of an operator, which has contracting properties thanks to the
good behavior of the kernel $G_\mu(x,y; \Lambda'(\xi))$ coming from Ancona
inequalities. Ultimately, this should provide a tractable analytic
characterization of the Martin kernel.

\medskip

We convert this heuristic discussion into a true operator $\boM_\mu$, defined
for any $\mu\in \bar U$. It acts on scalar-valued functions $f(x, \xi)$
defined on pairs $(x,\xi)$ with $\xi \in
\partial_B \Gamma$ and $x \in \Lambda(\xi)$, by the formula
\begin{equation}
\label{eq:defbommu}
  \boM_\mu f(x, \xi) = \sum_{y\in \Lambda_1(\xi) \setminus \Lambda'(\xi)}
     G_\mu(x,y; \Lambda'(\xi)) f(a(\xi)^{-1}y, a(\xi)^{-1}\xi).
\end{equation}
Note that the right-hand side is well defined as the point $(a(\xi)^{-1}y,
a(\xi)^{-1}\xi)$ belongs to the domain of definition of $f$, i.e.,
$a(\xi)^{-1}y \in \Lambda(a(\xi)^{-1}\xi)$, thanks to the condition $y\in
\Lambda_1(\xi)$.

To get a fixed point, we should projectivize this operator, normalizing for
instance so that the value at $(e,\xi)$ is always $1$. Hence, let
\begin{equation*}
  \boL_\mu f(x, \xi) = \boM_\mu f(x, \xi)/ \boM_\mu f(e, \xi).
\end{equation*}
This is not defined when $\boM_\mu f(e, \xi)=0$ for some $\xi$. We should
also ensure that the sums in the definition of $\boM_\mu$ are finite.

The contraction properties of this operator are central to the proof of
strong Ancona inequalities (Proposition~\ref{prop:ineq_ancona_strong}): For
$\mu\in\bar U^+$, these inequalities are obtained by letting this operator
act on a cone of positive functions, endowed with a Hilbert distance. To
apply the implicit function theorem, we will rather need contraction on a
Banach space, that we will deduce from
Proposition~\ref{prop:ineq_ancona_strong}. Hence, we will not reprove
Proposition~\ref{prop:ineq_ancona_strong}, but rather use its results to
obtain another form of contraction.

The iterates of $\boM_\mu$ (whence of $\boL_\mu$) have the same form. Indeed,
\begin{align*}
  \boM^2_\mu f(x, \xi)
  &= \sum_{y\notin \Lambda'(\xi)} G_\mu(x,y; \Lambda'(\xi)) \boM_\mu f(a(\xi)^{-1}y, a(\xi)^{-1}\xi)
  \\&=
  \sum_{y\notin \Lambda'(\xi)} G_\mu(x,y; \Lambda'(\xi)) \sum_{z\notin \Lambda'(a(\xi)^{-1}\xi)}
G_\mu(a(\xi)^{-1}y,z; \Lambda'(a(\xi)^{-1}\xi))
\times
\\& \hspace{3cm} \times
 f(a(a(\xi)^{-1}\xi)^{-1}z, a(a(\xi)^{-1}\xi)^{-1}a(\xi)^{-1}\xi).
\end{align*}
Let $w=a(\xi)z$, it belongs to the complement of $\Lambda'_1(\xi)$. Moreover,
\begin{equation*}
  G_\mu(a(\xi)^{-1}y,z;\Lambda'(a(\xi)^{-1}\xi))=G_\mu(y, w; \Lambda'_1(\xi)).
\end{equation*}
Decomposing a trajectory from $x\in \Lambda(\xi)$ to $w$ according to the
first point where it exits $\Lambda'(\xi)$, we have
\begin{equation*}
  \sum_{y\notin \Lambda'(\xi)} G_\mu(x,y; \Lambda'(\xi))G_\mu(y, w;
\Lambda'_1(\xi)) = G_\mu(x, w; \Lambda'_1(\xi)).
\end{equation*}
Recalling that $a_2(\xi) = a(\xi)\cdot a(a(\xi)^{-1}\xi)$, the above formula
for $\boM^2_\mu f(x, \xi)$ becomes
\begin{equation*}
  \boM^2_\mu f(x, \xi) = \sum_{w\notin \Lambda'_1(\xi)} G_\mu(x, w; \Lambda'_1(\xi)) f(a_2(\xi)^{-1}w, a_2(\xi)^{-1}\xi).
\end{equation*}
The only points with a nonzero coefficient $G_\mu(x, w; \Lambda'_1(\xi))$
belong to $\Lambda_2(\xi)$. Hence, the sum may be restricted to $w\in
\Lambda_2(\xi) \setminus \Lambda'_1(\xi)$.

In the same way, iterating this argument, one obtains
\begin{equation}
\label{eq:itereboMn}
  \boM^n_\mu f(x, \xi) = \sum_{y\in \Lambda_n(\xi) \setminus \Lambda'_{n-1}(\xi)}
     G_\mu(x, y; \Lambda'_{n-1}(\xi)) f(a_n(\xi)^{-1}y, a_n(\xi)^{-1}\xi).
\end{equation}
Finally, as the projectivization commutes with the iteration of $\boM_\mu$,
\begin{equation*}
  \boL_\mu^n f(x, \xi) = \boM^n_\mu f(x, \xi)/\boM^n_\mu f(e,\xi).
\end{equation*}

\bigskip

We define different norms on these functions. Let $\mu \in \bar U^+$. We set
\begin{equation*}
  \norm{f}_{\boC^0_\mu} = \sup_{\xi} \sup_{x\in \Lambda(\xi)} \abs{f(x, \xi)}/G_{\mu}(x,e)
\end{equation*}
and, for small $\beta>0$,
\begin{equation*}
  \norm{f}'_{\boC^\beta_\mu} = \sup_{d(\xi, \xi') \leq e^{-N}} d(\xi,\xi')^{-\beta}
  \sup_{x\in \Lambda(\xi)=\Lambda(\xi')} \abs{f(x, \xi)-f(x,\xi')}/G_{\mu}(x,e).
\end{equation*}
Finally, let $\norm{f}_{\boC^\beta_\mu} = \norm{f}_{\boC^0_\mu} +
\norm{f}'_{\boC^\beta_\mu}$. As these are H\"older-like norms, one checks
easily that these spaces are Banach algebras: if $f_1, f_2 \in
\boC^\beta_\mu$, then $f_1 f_2 \in \boC^\beta_\mu$ and
\begin{equation*}
  \norm{f_1 f_2}_{\boC^\beta_\mu} \leq \norm{f_1}_{\boC^\beta} \norm{f_2}_{\boC^\beta}.
\end{equation*}
The same holds in $\boC^0_\mu$.

\begin{rmk}
\label{rmq:equiv} Take $x\in \Lambda(\xi)$. Then a geodesics from $x$ to
$\pi_B \xi$ passes within uniformly bounded distance of $e$. By Ancona
inequalities, we obtain $G_\mu(x,y)/G_\mu(e,y) \asymp G_\mu(x,e)$ if $y$ is
close to $\pi_B \xi$. Letting $y$ tend to $\pi_B \xi$ yields
\begin{equation*}
  G_\mu(x,e) \asymp K_\mu^B(x,\xi) \quad\text{uniformly in }\xi\in \partial_B \Gamma \text{ and }x\in \Lambda(\xi).
\end{equation*}
Hence, in the definitions of the norms for $\boC^0_\mu$ and $\boC^\beta_\mu$,
we could have normalized by $K_\mu^B(x,\xi)$ instead of $G_\mu(x,e)$,
obtaining an equivalent norm.

This also implies that, if $d(\xi,\xi')\leq e^{-N}$ (ensuring that
$\Lambda(\xi)=\Lambda(\xi')$), then $K_\mu^B(x, \xi)\asymp K_\mu^B(x,\xi')$
uniformly on $x\in\Lambda(\xi)$.
\end{rmk}

If we want to study a fixed operator $\boL_\mu$ with $\mu\in \bar U^+$, the
space $\boC^\beta_\mu$ is most natural. However, we want to vary $\mu$ in
$U$. Hence, we need a reference space, independent of $\mu$. We recall that
we have a measure $\bar \mu$ which dominates all measures $\mu\in U$. The
spaces $\boC^\beta_{\mu}$ are all included in $\boC^\beta_{\bar\mu}$. Hence,
we can use the latter as a fixed reference space. We will simply write
$\boC^0 = \boC^0_{\bar\mu}$ and $\boC^\beta=\boC^\beta_{\bar\mu}$.

\begin{lem}
\label{lem:boMmuprime_bien_defini} For $\mu\in \bar U$ and small enough
$\beta$, the operator $\boM_{\mu}$ sends continuously $\boC^\beta$ into
itself. If $\mu\in \bar U^+$, it even sends $\boC^\beta$ into
$\boC^\beta_\mu$. Moreover, the map $(\mu,f)\mapsto \boM_{\mu} f$ is analytic
from $U \times \boC^\beta$ to $\boC^\beta$.
\end{lem}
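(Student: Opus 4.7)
My plan is to verify the three claims (continuity on $\boC^\beta$, mapping into $\boC^\beta_\mu$ when $\mu\in\bar U^+$, and joint analyticity) in that order. The key structural observation is that for any $\mu\in\bar U$, the relative Green function $G_\mu(x,y;\Lambda'(\xi))$ is absolutely dominated by $G_{\bar\mu}(x,y;\Lambda'(\xi))\leq G_{\bar\mu}(x,y)$, which is finite because $\bar\mu$ has spectral radius $<1$. Together with the fact that $\Lambda(\xi)$, $\Lambda'(\xi)$, $\Lambda_1(\xi)$, and $a(\xi)$ depend only on $h_\xi\restr B(e,N)$, this reduces every estimate to a uniformly controlled sum.

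For the $\boC^0$ bound, note that $f\in\boC^\beta\subseteq\boC^0$ satisfies $\abs{f(z,\eta)}\leq\norm{f}_{\boC^0} G_{\bar\mu}(z,e)$. Thus, using left-invariance of $G_{\bar\mu}$ to rewrite $G_{\bar\mu}(a(\xi)^{-1}y,e)=G_{\bar\mu}(y,a(\xi))$,
\begin{equation*}
  \abs{\boM_\mu f(x,\xi)}\leq \norm{f}_{\boC^0}\sum_{y} G_{\bar\mu}(x,y;\Lambda'(\xi))\, G_{\bar\mu}(y,a(\xi)).
\end{equation*}
For $N$ large enough, $a(\xi)\notin\Lambda'(\xi)$, so the first-exit decomposition collapses the sum to $G_{\bar\mu}(x,a(\xi))$, and the Harnack inequality~\eqref{eq:harnack} gives $G_{\bar\mu}(x,a(\xi))\leq Ce^{CN}G_{\bar\mu}(x,e)$, yielding $\norm{\boM_\mu f}_{\boC^0}\leq C\norm{f}_{\boC^0}$. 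For the stronger conclusion when $\mu\in\bar U^+$, use that the only $y$'s with nonzero weight lie within bounded distance $O(N)$ of $e$, so $G_{\bar\mu}(a(\xi)^{-1}y,e)$ is bounded by a constant, and then a variant of the same trick with $G_\mu$ in place of $G_{\bar\mu}$ (combined with the uniform Harnack inequality valid for all $\mu\in\bar U^+$) gives $\sum_y G_\mu(x,y;\Lambda'(\xi))\leq C\, G_\mu(x,e)$, hence $\norm{\boM_\mu f}_{\boC^0_\mu}\leq C\norm{f}_{\boC^0}$.

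For the H\"older seminorm, take $d(\xi,\xi')\leq e^{-N}$ so that $\Lambda'$, $\Lambda_1$, $a$, and even the kernel $G_\mu(x,\cdot;\Lambda'(\xi))$ are identical for $\xi$ and $\xi'$. Then
\begin{equation*}
  \boM_\mu f(x,\xi)-\boM_\mu f(x,\xi')=\sum_y G_\mu(x,y;\Lambda'(\xi))\bigl[f(a(\xi)^{-1}y,a(\xi)^{-1}\xi)-f(a(\xi)^{-1}y,a(\xi)^{-1}\xi')\bigr].
\end{equation*}
The H\"older control on $f$ and the trivial bound $d(a(\xi)^{-1}\xi,a(\xi)^{-1}\xi')\leq e^{\abs{a(\xi)}}d(\xi,\xi')=e^{N}d(\xi,\xi')$ from~\eqref{eq:stupid_contraction} give a factor $e^{\beta N}\norm{f}'_{\boC^\beta} d(\xi,\xi')^\beta$ in front of a sum of the same type estimated in the previous paragraph. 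Summing as before yields $\norm{\boM_\mu f}'_{\boC^\beta}\leq Ce^{\beta N}\norm{f}_{\boC^\beta}$, so $\boM_\mu$ is bounded $\boC^\beta\to\boC^\beta$ uniformly for $\mu\in\bar U$, and, by the same argument with $G_\mu$ in place of $G_{\bar\mu}$, bounded $\boC^\beta\to\boC^\beta_\mu$ when $\mu\in\bar U^+$.

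For analyticity, $\boM_\mu f$ is linear in $f$, so it suffices to show joint continuity plus analyticity in $\mu$ for fixed $f$. Each $G_\mu(x,y;\Lambda'(\xi))=\sum_n\sum_{\text{paths}}\mu(g_1)\cdots\mu(g_n)$ is an absolutely convergent power series in $\mu$, uniformly convergent on $\bar U$ thanks to the domination $\abs{\mu(g)}\leq\bar\mu(g)$ and the finiteness of $G_{\bar\mu}$. Hence each partial sum (in $n$ and $y$) of $\boM_\mu f$ is a polynomial in $\mu$ with values in $\boC^\beta$, and the estimates above show that these partial sums converge in the $\boC^\beta$ norm uniformly on $\bar U$. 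A locally uniform limit of holomorphic functions is holomorphic (as recalled in Paragraph~\ref{subsec:analytic}), so $\mu\mapsto\boM_\mu f$ is analytic on $U$. The main delicate point, which requires the sharpest form of the estimate, is the uniform convergence in the $\boC^\beta$ norm: I expect this to be the main technical obstacle, since it requires that the Hölder-style estimate of the previous paragraph, picking up a factor $e^{\beta N}$, be applied \emph{term by term} in the power-series expansion while keeping the total dominated by the uniformly convergent series for $G_{\bar\mu}$.
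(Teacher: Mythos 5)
Your first-exit shortcut for the $\boC^0=\boC^0_{\bar\mu}$ bound is correct and in fact a little cleaner than the paper's: for $x\in\Lambda(\xi)\subseteq\Lambda'(\xi)$ and $a(\xi)\notin\Lambda'(\xi)$, the first-passage decomposition collapses $\sum_{y\notin\Lambda'(\xi)}G_{\bar\mu}(x,y;\Lambda'(\xi))\,G_{\bar\mu}(y,a(\xi))$ to $G_{\bar\mu}(x,a(\xi))$, avoiding the paper's detour through Ancona inequalities and the finiteness of $\sum_y G_{\bar\mu}(e,y)G_{\bar\mu}(y,e)$. But your passage to the stronger $\boC^0_\mu$ bound has a genuine error, and it is exactly where the paper's Ancona argument cannot be dispensed with. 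The exit points $y$ with $G_\mu(x,y;\Lambda'(\xi))\neq 0$ are \emph{not} within bounded distance $O(N)$ of $e$: they lie within one $F$-step of the boundary of $\Lambda'(\xi)$, which is an unbounded region, so $G_{\bar\mu}(a(\xi)^{-1}y,e)$ decays with $|y|$ and is not a constant. Worse, the inequality $\sum_y G_\mu(x,y;\Lambda'(\xi))\leq C\,G_\mu(x,e)$ that you deduce is false: for a probability $\mu$ the left-hand side is the probability of ever leaving $\Lambda'(\xi)$ from $x$, hence of order $1$ uniformly, whereas $G_\mu(x,e)$ decays exponentially in $d(x,e)$ as $x$ ranges over the unbounded set $\Lambda(\xi)$. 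No first-exit manipulation can produce the $\mu$-weighted factor $G_\mu(x,e)$, because the $\boC^0$ norm of $f$ only gives you the $\bar\mu$-weighted bound $\abs{f(a(\xi)^{-1}y,a(\xi)^{-1}\xi)}\leq\norm{f}_{\boC^0}G_{\bar\mu}(a(\xi)^{-1}y,e)$, and a first-exit sum then closes up in terms of $G_{\bar\mu}(x,e)$ only.

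The missing idea is Ancona's inequality (Lemma~\ref{lem:Ancona}): bound $G_\mu(x,y;\Lambda'(\xi))\leq G_\mu(x,y)$ and note that, since $x\in\Lambda(\xi)$ and $y\notin\Lambda'(\xi)$, every geodesic from $x$ to $y$ passes within bounded distance of $e$, so $G_\mu(x,y)\leq C\,G_\mu(x,e)\,G_\mu(e,y)$. This is the step that splits off the $\mu$-dependent weight $G_\mu(x,e)$. The leftover $G_\mu(e,y)\,G_{\bar\mu}(a(\xi)^{-1}y,e)\leq C\,G_{\bar\mu}(e,y)\,G_{\bar\mu}(y,e)$ is then summed over $y\in\Gamma$, and the sum is finite because $\bar\mu$ has spectral radius $<1$ (it equals $G_{\bar\mu}(e,e)+\partial G_{r\bar\mu}(e,e)/\partial r\restr_{r=1}$). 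A secondary, more easily repaired point: in the H\"older estimate you apply the H\"older control on $f$ at the pair $(a(\xi)^{-1}\xi,a(\xi)^{-1}\xi')$, whose distance can only be bounded by $e^{N}d(\xi,\xi')$; since the $\boC^\beta$ seminorm only controls pairs at distance $\leq e^{-N}$, this is valid only when $d(\xi,\xi')\leq e^{-2N}$, and the intermediate range $e^{-2N}<d(\xi,\xi')\leq e^{-N}$ has to be handled separately by the sup-norm bound (as the paper does), at the cost of a harmless factor $e^{2N\beta}$. Your worry about the uniform convergence in $\boC^\beta$ of the length-truncated approximants is legitimate but is resolved exactly as in the paper; the dominant gap in your write-up is the $\boC^0_\mu$ estimate.
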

\begin{proof}
Consider $f\in \boC^\beta$ and $\mu\in \bar U^+$. Then
\begin{align*}
  \abs{\boM_{\mu} f(x,\xi)}
  &\leq \sum_{y\in \Lambda_1(\xi) \setminus \Lambda'(\xi)} G_{\mu}(x,y;
     \Lambda'(\xi)) \abs{f(a(\xi)^{-1}y, a(\xi)^{-1}\xi)}
  \\&\leq \sum_{y\in \Lambda_1(\xi) \setminus \Lambda'(\xi)} G_{\mu}(x,y) G_{\bar\mu} (a(\xi)^{-1}y, e) \norm{f}_{\boC^0}.
\end{align*}
As $x\in \Lambda(\xi)$ and $y\notin \Lambda'(\xi)$, a geodesics from $x$ to
$y$ passes within bounded distance of $e$. Hence, by Ancona inequalities
(Lemma~\ref{lem:Ancona}),
\begin{equation*}
 G_{\mu}(x,y)\leq C G_{\mu}(x,e) G_{\mu}(e,y)\leq C G_\mu(x,e) G_{\bar\mu}(e,y).
\end{equation*}
Moreover, $G_{\bar\mu} (a(\xi)^{-1}y, e)=G_{\bar\mu}(y, a(\xi)) \leq C
G_{\bar\mu}(y,e)$ as $a(\xi)$ is a bounded distance away from $e$. Hence,
\begin{equation*}
  \abs{\boM_{\mu} f(x,\xi)} \leq C \sum_{y\in \Gamma} G_{\mu}(x,e) G_{\bar\mu}(e,y) G_{\bar\mu}(y,e)  \norm{f}_{\boC^0}.
\end{equation*}
Factorizing $G_{\mu}(x,e) \norm{f}_{\boC^0}$, we are left with the sum
$\sum_y G_{\bar\mu}(e,y) G_{\bar\mu}(y,e)$. Since the spectral radius of
$\bar\mu$ is $<1$ by construction, the map $r\mapsto G_{r\bar\mu}(e,e)$ is
well defined and analytic on a neighborhood of $1$. In particular,
$G_{\bar\mu}(e,e) + \partial G_{r\bar\mu}(e,e)/\partial r \restr_{r=1}$ is
finite. By an elementary computation (see~\cite[Proposition
1.9]{gouezel_lalley}), this is equal to $\sum_y G_{\bar\mu}(e,y)
G_{\bar\mu}(y,e)$. Hence, this sum is finite. This shows that
\begin{equation*}
  \norm{\boM_{\mu} f}_{\boC^0_\mu} \leq C \norm{f}_{\boC^0}.
\end{equation*}

Let us now control the H\"older norm. Consider $\xi, \xi'$, write
$d(\xi,\xi')=e^{-n}$. If $n\leq 2N$, then
\begin{align*}
  \abs*{\boM_\mu f(x,\xi) - \boM_\mu f(x,\xi')} / (d(\xi,\xi')^\beta G_\mu(x,e))
  &\leq e^{2N\beta} (\abs*{\boM_\mu f(x,\xi)} + \abs*{\boM_\mu f(x,\xi')})/G_\mu(x,e)
  \\&\leq C e^{2N\beta} \norm{f}_{\boC^0},
\end{align*}
by the sup norm control we have already proved. Assume now that $n>2N$. This
entails $\Lambda(\xi)=\Lambda(\xi')$, and $a(\xi)=a(\xi')$, and
$\Lambda(a(\xi)^{-1}\xi)=\Lambda(a(\xi')^{-1}\xi')$. Then, for $x\in
\Lambda(\xi)=\Lambda(\xi')$,
\begin{align*}
  \abs{\boM_{\mu}f(x,\xi)&-\boM_{\mu}f(x,\xi')}
  \\&= \abs*{\sum_{y\in\Lambda_1(\xi) \setminus \Lambda'(\xi)}
      G_{\mu}(x,y; \Lambda'(\xi)) (f(a(\xi)^{-1}y, a(\xi)^{-1}\xi)- f(a(\xi')^{-1}y, a(\xi')^{-1}\xi'))}
  \\&\leq \sum_{y\in\Lambda_1(\xi) \setminus \Lambda'(\xi)}
      G_{\mu} (x,y) d(a(\xi)^{-1}\xi, a(\xi)^{-1}\xi')^{\beta} G_{\bar\mu}(a(\xi)^{-1}y,e) \norm{f}'_{\boC^\beta}.
\end{align*}
As above, this is bounded by
\begin{equation*}
  C G_\mu(x,e) d(a(\xi)^{-1}\xi, a(\xi)^{-1}\xi')^{\beta} \norm{f}'_{\boC^\beta}
  \leq C e^{\beta N} G_\mu(x,e) d(\xi,\xi')^{\beta} \norm{f}'_{\boC^\beta},
\end{equation*}
by~\eqref{eq:stupid_contraction}. This shows that $\boM_{\mu}$ maps
continuously $\boC^\beta$ into $\boC^\beta_\mu$ when $\mu\in \bar U^+$.

When $\mu\in U$, we show in the same way that $\boM_{\mu}$ maps continuously
$\boC^\beta$ into itself. The computation is the same, excepted that $G_\mu$
is not positive any more. Hence, one should bound $\abs{G_\mu(x,y)}$ by
$G_{\bar\mu}(x,y)$ at the beginning of the computation, and then proceed in
the same way.

The map $F: (\mu,f)\mapsto \boM_\mu f$ on $\boC^\beta$ is linear in $f$. To
prove that it is analytic, we should control its dependence on $\mu$. Each
function $\mu\mapsto G_{\mu}(x,y; \Lambda'(\xi))$ is a limit of a degree $K$
polynomial, obtained by considering the weight of trajectories of length at
most $K$. Moreover, the corresponding $K$-truncated operators all satisfy the
same bounds as $F$, since the above computations apply. Hence, $F$ is the
uniform limit of analytic operators $F_K$. As analyticity is stable under
uniform convergence (see Paragraph~\ref{subsec:analytic}), it follows that
$F$ itself is analytic.
\end{proof}

\begin{cor}
\label{cor:boM_analytic} Let $\mu_1\in U$ and $f_1 \in \boC^\beta$ satisfy
$\boM_{\mu_1} f_1(e,\xi) \neq 0$ for all $\xi$. Then the operator
$(\mu,f)\mapsto \boL_{\mu} f$ is well defined and analytic from a
neighborhood of $(\mu_1, f_1)$ in $U\times \boC^\beta$, to $\boC^\beta$.
\end{cor}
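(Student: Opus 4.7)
The plan is to factor $\boL_\mu f$ as a composition of analytic maps, writing $\boL_\mu f(x,\xi) = \boM_\mu f(x,\xi) \cdot \phi_{\mu,f}(\xi)^{-1}$ with $\phi_{\mu,f}(\xi) = \boM_\mu f(e,\xi)$. Lemma~\ref{lem:boMmuprime_bien_defini} already provides the analyticity of $(\mu,f)\mapsto \boM_\mu f$ from $U\times \boC^\beta$ into $\boC^\beta$, so what remains is to handle evaluation at $x=e$, pointwise inversion in $\xi$, and multiplication back into $\boC^\beta$, together with a continuity argument ensuring that the denominator stays away from zero on a neighborhood of $(\mu_1,f_1)$.

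First I would check that restriction $R : h \mapsto h(e,\cdot)$ is a continuous linear (hence analytic) map from $\boC^\beta$ into the space $C^\beta(\partial_B \Gamma)$ of $\beta$-H\"older functions on the Busemann boundary. Since $e \in \Lambda(\xi)$ for every $\xi$, the definitions of the norms give $\abs{h(e,\xi)} \leq G_{\bar\mu}(e,e) \norm{h}_{\boC^0}$; for $d(\xi,\xi')\leq e^{-N}$ one has $\abs{h(e,\xi)-h(e,\xi')} \leq G_{\bar\mu}(e,e) \norm{h}'_{\boC^\beta} d(\xi,\xi')^\beta$, and for larger distances the sup-norm bound is enough since $d(\xi,\xi')^\beta$ is then bounded below.

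Next I would combine two standard Banach-algebra facts. The reciprocation $\psi\mapsto 1/\psi$ is analytic on the open subset $\{\psi\in C^\beta(\partial_B\Gamma): \inf_\xi\abs{\psi(\xi)}>0\}$, via the geometric series $1/\psi = \psi_0^{-1}\sum_{k\geq 0}(1-\psi/\psi_0)^k$ valid in the Banach algebra $C^\beta(\partial_B\Gamma)$. And the pointwise multiplication $M : (h,\psi)\mapsto \bigl[(x,\xi)\mapsto h(x,\xi)\psi(\xi)\bigr]$ is a continuous bilinear (hence analytic) map from $\boC^\beta \times C^\beta(\partial_B\Gamma)$ into $\boC^\beta$: the $G_{\bar\mu}(x,e)$-weighted sup-norm control is immediate since $\psi$ is bounded in $\xi$, and the H\"older control comes from the standard Leibniz split of $h(x,\xi)\psi(\xi)-h(x,\xi')\psi(\xi')$, yielding an estimate of the form $\norm{M(h,\psi)}_{\boC^\beta}\leq C\norm{h}_{\boC^\beta}\norm{\psi}_{C^\beta}$.

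Finally, I would assemble the pieces: $(\mu,f)\mapsto R(\boM_\mu f) = \phi_{\mu,f}$ is analytic into $C^\beta(\partial_B\Gamma)$. The hypothesis $\phi_{\mu_1,f_1}(\xi)\neq 0$ on all of $\partial_B\Gamma$, together with continuity of $\phi_{\mu_1,f_1}$ and compactness of $\partial_B\Gamma$, yields $\inf_\xi\abs{\phi_{\mu_1,f_1}(\xi)}>0$; by continuity of this composition the same lower bound persists on a neighborhood of $(\mu_1,f_1)$ in $U\times\boC^\beta$. On that neighborhood, $(\mu,f)\mapsto 1/\phi_{\mu,f}$ is well-defined and analytic by the reciprocation step, and applying $M$ to the analytic pair $(\boM_\mu f,\, 1/\phi_{\mu,f})$ produces the sought analyticity of $\boL_\mu f$. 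There is no serious obstacle; the only point that benefits from a careful check is that the $G_{\bar\mu}(x,e)$ weight in the definition of $\boC^\beta$ plays well with restriction to $x=e$ and with pointwise multiplication by bounded functions of $\xi$ alone, both of which are routine once spelled out.
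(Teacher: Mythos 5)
Your proof is correct and follows essentially the same route as the paper: factor $\boL_\mu = \boN \circ \boM_\mu$ where $\boM_\mu$ is analytic by Lemma~\ref{lem:boMmuprime_bien_defini} and the normalization step is handled by a geometric-series argument using the Banach-algebra structure, with compactness of $\partial_B\Gamma$ ensuring the denominator stays bounded away from zero. The only difference is cosmetic: you split the normalization into restriction to $x=e$, reciprocation in $C^\beta(\partial_B\Gamma)$, and a bounded bilinear multiplication $\boC^\beta\times C^\beta(\partial_B\Gamma)\to\boC^\beta$, which makes explicit the multiplier structure that the paper's single map $\boN$ treats in one stroke.
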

\begin{proof}
The operator to be studied is the composition of the operators
$(\mu,f)\mapsto \boM_{\mu}f$ and $\boN:f\mapsto \tilde
f(x,\xi)=f(x,\xi)/f(e,\xi)$. The first one is well defined and analytic by
Lemma~\ref{lem:boMmuprime_bien_defini}. Hence, it suffices to show that
$\boN$ is well defined and analytic on the open set $\boD^\beta \subseteq
\boC^\beta$ of functions $f$ with $f(e,\xi)\neq 0$ for all $\xi$.

Let us first check that, if  $f\in \boD^\beta$, then $\boN f \in \boC^\beta$.
The supremum condition is obvious since $1/\abs{f(e,\xi)}$ is uniformly
bounded, by compactness of $\partial_B \Gamma$ and continuity. We check the
H\"older condition. We have
\begin{align*}
  \abs{\boN f(x, \xi)-\boN f(x,\xi')}
  &=\abs{f(x,\xi)/f(e,\xi) - f(x,\xi')/f(e,\xi')}
  \\&
  \leq C \abs{f(x,\xi) f(e,\xi') - f(e,\xi) f(x,\xi')}
  \\&
  \leq C \abs{f(x,\xi)-f(x,\xi')} \abs{f(e,\xi')} + C \abs{f(x,\xi')} \abs{f(e,\xi')-f(e,\xi)}.
\end{align*}
The first term is bounded by $C d(\xi,\xi')^\beta G_{\bar\mu}(x,e)$. In the
second one, we have $\abs{f(x,\xi')} \leq C G_{\bar\mu}(x,e)$ and
$\abs{f(e,\xi')-f(e,\xi)} \leq C d(\xi',\xi)^\beta$, hence we obtain the same
bound. This shows that $\boN f\in \boC^\beta$.

For the analyticity, let us fix $f\in \boD^\beta$. For small $h\in
\boC^\beta$,
\begin{equation*}
\boN (f+h) (x,\xi) = \frac{(f+h)(x,\xi)}{(f+h)(e,\xi)}
=\frac{(f+h)(x,\xi)}{f(e,\xi)} \sum (-1)^n (h/f)(e,\xi)^n.
\end{equation*}
This power series converges on a ball with positive radius, as $\boC^\beta$
is a Banach algebra.
\end{proof}

In the same way, one proves the following:
\begin{cor}
\label{cor:boMmu_analytic} Let $\mu\in \bar U^+$ and $f_1 \in \boC^\beta$
satisfy $\boM_{\mu} f_1(e,\xi) \neq 0$ for all $\xi$. Then the operator
$f\mapsto \boL_{\mu} f$ is well defined and analytic from a neighborhood of
$f_1$ in $ \boC^\beta$, to $\boC_{\mu}^\beta$.
\end{cor}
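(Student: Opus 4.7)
The plan is to follow the template of Corollary~\ref{cor:boM_analytic} almost verbatim, but with one careful bookkeeping change regarding the codomain. The map $f\mapsto \boL_\mu f$ factors as $f\mapsto \boM_\mu f$ followed by the normalization $\boN:g\mapsto g(x,\xi)/g(e,\xi)$. For $\mu$ held fixed in $U$, the map $f\mapsto \boM_\mu f$ is \emph{linear}, and its analyticity follows immediately from its continuity, which is what Lemma~\ref{lem:boMmuprime_bien_defini} provides; crucially, under the present hypothesis $\mu\in\bar U^+$, that same lemma gives the sharper statement that $\boM_\mu$ actually lands in $\boC^\beta_\mu$, not merely in $\boC^\beta$. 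Thus the first factor is a continuous linear (hence analytic) map $\boC^\beta\to\boC^\beta_\mu$.

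Next I would check that $\boN$ is well defined and analytic on the open subset $\boD^\beta_\mu\subseteq\boC^\beta_\mu$ consisting of functions $g$ with $g(e,\xi)\neq 0$ for every $\xi\in\partial_B\Gamma$. That this set is open and that $1/g(e,\cdot)$ is uniformly bounded there follows from compactness of $\partial_B\Gamma$ and the continuity built into the $\boC^\beta_\mu$ norm. The fact that $\boN$ sends $\boD^\beta_\mu$ into $\boC^\beta_\mu$ is the same two-line estimate as in the proof of Corollary~\ref{cor:boM_analytic}: write
\begin{equation*}
  \frac{g(x,\xi)}{g(e,\xi)}-\frac{g(x,\xi')}{g(e,\xi')}
  =\frac{(g(x,\xi)-g(x,\xi'))g(e,\xi')+g(x,\xi')(g(e,\xi')-g(e,\xi))}{g(e,\xi)g(e,\xi')},
\end{equation*}
and bound each factor by the $\boC^\beta_\mu$-norm of $g$, using that $\abs{g(x,\xi')}\leq \norm{g}_{\boC^0_\mu}G_\mu(x,e)$. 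This step uses only that $\boC^\beta_\mu$ is a Banach algebra, a fact noted right after its definition.

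For analyticity at a given $f\in\boD^\beta_\mu$, I would expand, for small $h\in\boC^\beta_\mu$,
\begin{equation*}
  \boN(f+h)(x,\xi)=\frac{(f+h)(x,\xi)}{f(e,\xi)}\sum_{n\geq 0}(-1)^n\paren*{\frac{h(e,\cdot)}{f(e,\cdot)}}^n(\xi),
\end{equation*}
which converges in $\boC^\beta_\mu$-norm on a ball of positive radius around $f$ because $\boC^\beta_\mu$ is a Banach algebra and $1/f(e,\cdot)$ is $\beta$-Hölder and bounded away from zero. This exhibits $\boN$ as a convergent power series, i.e.\ as an analytic map $\boD^\beta_\mu\to\boC^\beta_\mu$. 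Composing with the analytic map $f\mapsto\boM_\mu f$ from a neighborhood of $f_1$ in $\boC^\beta$ (which by hypothesis lands in $\boD^\beta_\mu$ at $f=f_1$, hence on a neighborhood by continuity) yields the desired analytic map $f\mapsto\boL_\mu f$ into $\boC^\beta_\mu$.

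There is really no serious obstacle; the only subtlety to watch for is keeping the two norm scales $\boC^\beta$ and $\boC^\beta_\mu$ separate, and invoking the second (stronger) conclusion of Lemma~\ref{lem:boMmuprime_bien_defini} that applies precisely because $\mu\in\bar U^+$ rather than merely $\mu\in U$. Everything else is a direct transcription of the previous corollary.
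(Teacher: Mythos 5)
Your proof is correct and follows essentially the same route as the paper: factor $\boL_\mu$ as $\boM_\mu$ (linear, landing in $\boC^\beta_\mu$ by the sharper conclusion of Lemma~\ref{lem:boMmuprime_bien_defini} since $\mu\in\bar U^+$) followed by the normalization $\boN$, then check that the power-series argument for the analyticity of $\boN$ from Corollary~\ref{cor:boM_analytic} carries over to $\boC^\beta_\mu$. The paper compresses the last step to ``the same proof works in $\boC^\beta_\mu$''; you spell out the Banach-algebra estimate and the openness of $\boD^\beta_\mu$, which is exactly what that remark presupposes.
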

\begin{proof}
The operator $\boL_{\mu}$ is the composition of $\boM_{\mu}$ and $\boN:
f\mapsto \tilde f(x,\xi)=f(x,\xi)/f(e,\xi)$. The first one is linear from
$\boC^\beta$ to $\boC_{\mu}^\beta$ by Lemma~\ref{lem:boMmuprime_bien_defini},
the second one is analytic as we explained in the proof of
Corollary~\ref{cor:boM_analytic} (on $\boC^\beta$, but the same proof works
in $\boC^\beta_{\mu}$). Hence, their composition is analytic.
\end{proof}

\begin{lem}
\label{lem:Kmuprime_Cralpha} For small enough $\beta$ and for $\mu\in \bar
U^+$, the function $(x,\xi)\mapsto K_\mu^B(x,\xi)$ belongs to $\boC^\beta$.
\end{lem}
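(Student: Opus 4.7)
The plan is to verify the two pieces of the $\boC^\beta$ norm separately. Boundedness is immediate from Remark~\ref{rmq:equiv}, which gives $K_\mu^B(x,\xi) \asymp G_\mu(x,e) \le G_{\bar\mu}(x,e)$ uniformly in $\xi\in\partial_B\Gamma$ and $x\in\Lambda(\xi)$; hence $\norm{K_\mu^B}_{\boC^0}<\infty$ with no extra work.

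The Hölder bound is the main content, and I would obtain it by a direct application of the strong Ancona inequality (Proposition~\ref{prop:ineq_ancona_strong}) to $u=K_\mu^B(\cdot,\xi)$ and $v=K_\mu^B(\cdot,\xi')$. Fix $\xi,\xi'$ with $d(\xi,\xi')=e^{-n}\le e^{-N}$, so $h_\xi$ and $h_{\xi'}$ coincide on $B(e,n)$. Because $a_j(\xi)$ only depends on $h_\xi\restr B(e,jN+O(1))$, one has $a_j(\xi)=a_j(\xi')$ for every $j$ up to some $k\asymp n/N$; denote the common point by $a=a_k$. Let $\gamma$ be a geodesic from $e$ to $a$; its length $D=kN$ is comparable to $n$. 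The two inputs needed to feed $\gamma$ into Proposition~\ref{prop:ineq_ancona_strong} are purely geometric. First, for $x\in\Lambda(\xi)=\Lambda(\xi')$, the defining inequality $\gromprod{a(\xi)}{x}{e}\le N/4$ together with $\delta$-hyperbolicity and the fact that $a(\xi)$ lies on a geodesic from $e$ to $a$ yields $\gromprod{a}{x}{e}\le N/4+O(\delta)$, so that $x$ lies in $I^-(\gamma)$ up to harmlessly replacing the constant $10\delta$ in the definition of $I^-$ by a larger (fixed) constant. Second, by Remark~\ref{rmq:Martin_borne} (applied with the same $\gamma$), both $K_\mu^B(\cdot,\xi)$ and $K_\mu^B(\cdot,\xi')$ are positive, $\mu$-harmonic, and bounded on $\Gamma\setminus I^+(\gamma)$ by a constant multiple of $G_\mu(\cdot,a)$, since $\pi_B\xi$ and $\pi_B\xi'$ both lie in the boundary closure of $I^+(\gamma)$.

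Granting these, Proposition~\ref{prop:ineq_ancona_strong} applied with $x'=e$ (where $K_\mu^B(e,\cdot)\equiv 1$) yields, for every $x\in\Lambda(\xi)$,
\begin{equation*}
\abs{K_\mu^B(x,\xi)-K_\mu^B(x,\xi')}\le Ce^{-C^{-1}D}\,K_\mu^B(x,\xi')\le C'e^{-C^{-1}n}\,G_{\bar\mu}(x,e).
\end{equation*}
Choosing $\beta\in(0,C^{-1})$ converts $e^{-C^{-1}n}$ into $C''\,d(\xi,\xi')^\beta$, which gives $\norm{K_\mu^B}'_{\boC^\beta}<\infty$ and hence the lemma.

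The main obstacle is the geometric bookkeeping around $I^\pm(\gamma)$: verifying that $\Lambda(\xi)$ really sits inside a suitable enlargement of $I^-(\gamma)$, and that the domination hypothesis of Proposition~\ref{prop:ineq_ancona_strong} holds on $\Gamma\setminus I^+(\gamma)$. Both are routine applications of $\delta$-hyperbolicity and iterated Ancona inequalities (Lemma~\ref{lem:Ancona}), but they require careful tracking of how $N$, $\delta$, and the additive constant in the definitions of $I^\pm$ interact, and rely on the fact that the specific ``$10\delta$'' in the statement of Proposition~\ref{prop:ineq_ancona_strong} can be replaced by any fixed constant at the cost of modifying $C$ and $D_0$.
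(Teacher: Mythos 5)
Your proposal is correct and follows exactly the same route as the paper: bound the sup norm via Remark~\ref{rmq:equiv}, then apply Proposition~\ref{prop:ineq_ancona_strong} (via Remark~\ref{rmq:Martin_borne}) to $u=K_\mu^B(\cdot,\xi)$, $v=K_\mu^B(\cdot,\xi')$ at the pair $(x,x')=(x,e)$, using that $K_\mu^B(e,\cdot)\equiv1$, and absorb $e^{-C^{-1}n}$ into $d(\xi,\xi')^\beta$ for $\beta$ small. The paper's proof is terse and leaves the choice of geodesic and the verification of the hypotheses of the strong Ancona inequality implicit; your proposal spells out precisely this bookkeeping (the geodesic $[e,a_k]$ with $k\asymp n/N$, the containment of $\Lambda(\xi)$ in an enlarged $I^-$, and the domination of the Martin kernels off $I^+$), so it is if anything more complete. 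One small inaccuracy: the condition from Remark~\ref{rmq:Martin_borne} is $\pi_B\xi\notin\overline{\Gamma\setminus I^+(\gamma)}$, which is stronger than ``$\pi_B\xi$ lies in the boundary closure of $I^+(\gamma)$''; what you actually verify (that any sequence converging to $\pi_B\xi$ eventually lies in a mildly enlarged $I^+(\gamma)$) is the right statement, so this is a wording slip rather than a gap.
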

\begin{proof}
By~\eqref{rmq:equiv}, $K_\mu^B(x,\xi) \asymp G_{\mu}(x,e)$ uniformly in $\xi
\in \partial_B\Gamma$ and $x\in\Lambda(\xi)$. Moreover, $G_{\mu}(x,e) \leq
G_{\bar\mu}(x,e)$. This proves that $\norm{K_\mu^B}_{\boC^0}<\infty$.

Let us estimate its H\"older norm. Consider $\xi,\xi'$ with
$d(\xi,\xi')=e^{-n}$ for some $n\geq N$.
Proposition~\ref{prop:ineq_ancona_strong} applies to Martin kernels, by
Remark~\ref{rmq:Martin_borne}. It shows that, for any $x\in\Lambda(\xi)$,
\begin{equation*}
  \abs*{ \frac{K_\mu^B(x,\xi)/K_\mu^B(e,\xi)}{K_\mu^B(x,\xi')/K_\mu^B(e,\xi')}-1} \leq C e^{-C^{-1}n}.
\end{equation*}
As $K_\mu^B(e,\xi) = K_\mu^B(e,\xi')=1$, we obtain
\begin{equation*}
  \abs{K_\mu^B(x,\xi) - K_\mu^B(x,\xi')} \leq K_\mu^B(x,\xi') C e^{-C^{-1}n}.
\end{equation*}
The term on the right hand side is bounded by $C' G_{\bar\mu}(x,e)
d(\xi,\xi')^{\beta}$ if $\beta$ is small enough.
\end{proof}

The operators $\boL_\mu$ were defined precisely so that the following lemma
holds.
\begin{lem}
\label{lem:Kmu_pointfixe_unique} Let $\mu\in \bar U^+$. The function $(x,
\xi)\mapsto K_\mu^B(x, \xi)$ is a fixed point of $\boL_{\mu}$. It is the only
one among positive functions in $\boC^\beta$.
\end{lem}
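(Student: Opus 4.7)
The plan is to prove the two assertions---that $K_\mu^B$ is a fixed point of $\boL_\mu$ and that any positive fixed point in $\boC^\beta$ equals $K_\mu^B$---separately.

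For the fixed-point claim, I would apply the representation~\eqref{eq:req_rec} to $u=K_\mu^B(\cdot,\xi)$; this is legitimate by Remark~\ref{rmq:Martin_borne}, and gives
\[
K_\mu^B(x,\xi) \;=\; \sum_{y\in\Lambda_1(\xi)\setminus\Lambda'(\xi)} G_\mu(x,y;\Lambda'(\xi))\,K_\mu^B(y,\xi),
\]
the sum being automatically truncated to $\Lambda_1(\xi)$ because $\mu$ has bounded jumps. Next I would use the standard cocycle identity for the Martin kernel under left-translation, $K_\mu(x,g\eta)=K_\mu(g^{-1}x,\eta)/K_\mu(g^{-1},\eta)$, specialized to $g=a(\xi)$, to rewrite each $K_\mu^B(y,\xi)$ as $K_\mu^B(a(\xi)^{-1}y,a(\xi)^{-1}\xi)/K_\mu^B(a(\xi)^{-1},a(\xi)^{-1}\xi)$. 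Substituting yields $\boM_\mu K_\mu^B(x,\xi) = K_\mu^B(a(\xi)^{-1},a(\xi)^{-1}\xi)\cdot K_\mu^B(x,\xi)$; evaluating at $x=e$ identifies the scalar as $\boM_\mu K_\mu^B(e,\xi)$, and division gives $\boL_\mu K_\mu^B=K_\mu^B$.

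For uniqueness, let $f\in\boC^\beta$ be positive with $\boL_\mu f=f$. Setting $x=e$ forces $f(e,\xi)=1$, so $\boM_\mu f=\lambda(\xi)f$ with $\lambda(\xi):=\boM_\mu f(e,\xi)>0$. Iterating and invoking~\eqref{eq:itereboMn},
\[
\lambda_n(\xi)\,f(x,\xi) \;=\; \sum_{y\in\Lambda_n(\xi)\setminus\Lambda'_{n-1}(\xi)} G_\mu(x,y;\Lambda'_{n-1}(\xi))\,f(a_n(\xi)^{-1}y,\,a_n(\xi)^{-1}\xi)
\]
for positive scalars $\lambda_n(\xi)$, with a parallel identity for $K_\mu^B$ involving scalars $\tilde\lambda_n(\xi)$. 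Viewed as functions of $x$, both right-hand sides are positive, $\mu$-harmonic on $\Lambda'_{n-1}(\xi)$, and dominated by a linear combination of Green functions $G_\mu(\cdot,y)$ with $y$ outside $\Lambda'_{n-1}(\xi)$ (using the $\boC^\beta$-bound on $f$, respectively Remark~\ref{rmq:Martin_borne} for $K_\mu^B$, together with the convolution estimate based on the finiteness of $\sum_y G_{\bar\mu}(e,y)G_{\bar\mu}(y,e)$ used already in Lemma~\ref{lem:boMmuprime_bien_defini}).

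I would then apply Proposition~\ref{prop:ineq_ancona_strong} to these two harmonic functions along a geodesic $\gamma_n$ of length $\sim nN$ joining a neighborhood of $e$ to $a_n(\xi)$. Since both $f$ and $K_\mu^B$ are normalized to equal $1$ at $(e,\xi)$, the unknown scalars $\lambda_n,\tilde\lambda_n$ cancel upon dividing by the value at $x=e$, and the proposition yields
\[
\Bigl|\tfrac{f(x,\xi)}{K_\mu^B(x,\xi)}-1\Bigr|\;\leq\;C e^{-C^{-1}n}
\]
uniformly on $x\in\Lambda(\xi)$; letting $n\to\infty$ forces $f=K_\mu^B$. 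The main delicate point is the geometric input into Proposition~\ref{prop:ineq_ancona_strong}, namely $\Lambda(\xi)\subseteq I^-(\gamma_n)$ and $\Lambda_n(\xi)\setminus\Lambda'_{n-1}(\xi)\subseteq I^+(\gamma_n)$; this follows from the fact that the complements of the sets $\Lambda'_k(\xi)$ are essentially nested horoballs around $\pi_B\xi$ whose depth grows linearly in $k$, so that points leaving these horoballs near the $k$-th barrier lie very close to (and at the right end of) a long geodesic ray from $e$ towards $\pi_B\xi$.
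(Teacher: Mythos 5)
Your fixed-point direction is identical to the paper's: apply~\eqref{eq:req_rec} to $K_\mu^B(\cdot,\xi)$ via Remark~\ref{rmk:Kmu_rec}, use the multiplicative cocycle identity for the Martin kernel, and divide by the value at $e$.

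For uniqueness you take a mildly different route. The paper applies Proposition~\ref{prop:ineq_ancona_strong} to the pair $u=G_\mu(\cdot,y;\Lambda'_{n-1}(\xi))$, $v=K_\mu^B(\cdot,\xi)$ for each individual $y$ on the exit barrier (where the domination hypothesis is trivially satisfied by one Green function, respectively by Remark~\ref{rmq:Martin_borne}), and then plugs the resulting uniform ratio estimate into the sum~\eqref{eq:itereboMn}, using nonnegativity of $f$ to factor. You instead want to apply the proposition once, directly to the aggregate functions $\boM_\mu^n f(\cdot,\xi)$ and $\boM_\mu^n K_\mu^B(\cdot,\xi)$. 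That is a valid strategy, but as written it has a gap in verifying the hypothesis of Proposition~\ref{prop:ineq_ancona_strong}: the proposition asks that $u$ be bounded by a \emph{finite} linear combination of $G_\mu(\cdot,y)$ with $y\in I^+(\gamma)$, while your $u(x)=\sum_{y\in\Lambda_n(\xi)\setminus\Lambda'_{n-1}(\xi)}G_\mu(x,y;\Lambda'_{n-1}(\xi))f(a_n(\xi)^{-1}y,\cdot)$ is an infinite sum over an infinite exit barrier, and the only convenient closed-form bound you have for $f$ itself, namely $\abs{f}\leq \norm{f}_{\boC^0}G_{\bar\mu}(\cdot,e)$, is centered at $e\notin I^+(\gamma)$ and involves $G_{\bar\mu}$ rather than $G_\mu$, so it does not supply the required domination either. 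The repair is easy (truncate the sum to a finite set, apply the proposition to each truncation, and pass to the limit using the convergence from Lemma~\ref{lem:boMmuprime_bien_defini}), or simply revert to the term-by-term application as the paper does; but you should not treat the domination hypothesis as automatic for the aggregated function.
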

\begin{proof}
As $K_\mu^B \in \boC^\beta$ by Lemma~\ref{lem:Kmuprime_Cralpha}, the operator
$\boM_{\mu}$ is well defined on $K_\mu^B$. By positivity, $\boL_{\mu}
K_\mu^B$ is also well defined.

By Remark~\ref{rmk:Kmu_rec}, the function $K_\mu^B$ satisfies the
equation~\eqref{eq:req_rec}, i.e.,
\begin{equation}
\label{eq:Kmuxig}
  K_\mu^B(x, \xi) = \sum_{y\notin \Lambda'(\xi)} G_{\mu}(x,y; \Lambda'(\xi)) K_\mu^B(y,\xi).
\end{equation}
It follows from its definition that $K_\mu^B$ satisfies the following
multiplicative cocycle equation:
\begin{equation*}
  K_\mu^B(y,\xi)=K_\mu^B(a^{-1}y, a^{-1}\xi)/K_\mu^B(a^{-1}, a^{-1}\xi).
\end{equation*}
Therefore,~\eqref{eq:Kmuxig} gives
\begin{equation*}
  K_\mu^B(x, \xi) = \sum_{y\notin \Lambda'(\xi)} G_\mu(x,y; \Lambda'(\xi))
    K_\mu^B(a(\xi)^{-1}y, a(\xi)^{-1}\xi)/K_\mu^B(a(\xi)^{-1}, a(\xi)^{-1}\xi).
\end{equation*}
Hence,
\begin{equation}
\label{eq:itere_Kmu}
  \boM_{\mu} K_\mu^B(x, \xi) = K_\mu^B(a(\xi)^{-1}, a(\xi)^{-1}\xi) K_\mu^B(x,\xi).
\end{equation}
Applying this equation to $x=e$ and dividing both sides, we get $K_\mu^B(x,
\xi)/K_\mu^B(e,\xi) = \boL_{\mu} K_\mu^B(x,\xi)$. As $K_\mu^B(e, \xi)=1$,
this concludes the proof that $K_\mu^B$ is a fixed point of $\boL_{\mu}$.

Consider now any fixed point $f>0$ of $\boL_\mu$ in $\boC^\beta$. Using the
equality~\eqref{eq:itereboMn}, we get
\begin{equation}
\label{eq:liuvcpouwxociv}
  f(x,\xi) = \boL_{\mu}^n f(x, \xi) = \boM_{\mu}^n f(e, \xi)^{-1} \cdot
    \sum_{\mathclap{y\notin \Lambda'_{n-1}(\xi)}} G_{\mu}(x, y; \Lambda'_{n-1}(\xi)) f(a_n(\xi)^{-1}y, a_n(\xi)^{-1}\xi).
\end{equation}
Proposition~\ref{prop:ineq_ancona_strong} yields, for all $x\in \Lambda(\xi)$
and all $y\notin \Lambda'_{n-1}(\xi)$ with $G_\mu(e,y; \Lambda'_{n-1}(\xi)) >
0$
\begin{equation*}
  \abs*{\frac{G_{\mu}(x, y; \Lambda'_{n-1}(\xi))/ G_{\mu}(e, y; \Lambda'_{n-1}(\xi))}{K_\mu^B(x,\xi)/K_\mu^B(e,\xi)} - 1}
  \leq Ce^{-C^{-1}n}.
\end{equation*}
Let $\epsilon>0$. For large enough $n$, we obtain
\begin{equation*}
  G_{\mu}(x, y; \Lambda'_{n-1}(\xi))
  = (1\pm \epsilon)G_{\mu}(e, y; \Lambda'_{n-1}(\xi))K_\mu^B(x,\xi).
\end{equation*}
Injecting this estimate into~\eqref{eq:liuvcpouwxociv} (and using the
nonnegativity of $f$), we get
\begin{align*}
  f(x,\xi) &= (1\pm \epsilon) \boM_{\mu}^n f(e, \xi)^{-1}  \cdot
      \sum_{\mathclap{y\notin \Lambda'_{n-1}(\xi)}} G_{\mu}(e, y; \Lambda'_{n-1}(\xi))K_\mu^B(x,\xi) f(a_n(\xi)^{-1}y, a_n(\xi)^{-1}\xi)
  \\& = (1\pm \epsilon) \boM_{\mu}^n f(e, \xi)^{-1}  \cdot K_\mu^B(x,\xi) \boM_{\mu}^n f(e, \xi)
  = (1\pm \epsilon) K_\mu^B(x,\xi).
\end{align*}
Letting $\epsilon$ tend to $0$, we obtain $f=K_\mu^B$.
\end{proof}

The following lemma encompasses the contraction properties of $\boL_\mu$ on
$\boC^\beta$. It is the main technical tool to be able to apply the implicit
function theorem later on.

\begin{lem}
\label{lem:diff_contracte} Assume that $\beta$ is small enough. Let $\mu\in
\bar U^+$. The differential of $\boL_\mu$ at $K_\mu^B$ satisfies
\begin{equation*}
 \norm{ D \boL_\mu^n(K_\mu^B) : \boC^\beta \to \boC^\beta} \leq C e^{-\rho n},
\end{equation*}
for some $\rho>0$, some $C>0$ and all $n\geq 0$.
\end{lem}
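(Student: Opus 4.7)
The plan is to obtain an explicit formula for $D\boL_\mu^n(K_\mu^B)$ and to extract the exponential contraction directly from the strong Ancona inequality of Proposition~\ref{prop:ineq_ancona_strong}. Since $\boL_\mu$ fixes $K_\mu^B$ by Lemma~\ref{lem:Kmu_pointfixe_unique}, the chain rule reduces the claim to $\norm{A^n}\leq Ce^{-\rho n}$ where $A := D\boL_\mu(K_\mu^B)$. Writing $\boL_\mu = \boN\circ \boM_\mu$ with $\boN f(x,\xi) = f(x,\xi)/f(e,\xi)$, a quotient-rule computation at $g=\boM_\mu K_\mu^B$, together with the identity $\boM_\mu^n K_\mu^B(x,\xi) = c_n(\xi)K_\mu^B(x,\xi)$ obtained by iterating~\eqref{eq:itere_Kmu} (where $c_n(\xi):=\boM_\mu^n K_\mu^B(e,\xi)$), gives
\[
  A^n h(x,\xi) = \frac{1}{c_n(\xi)}\bigl(\boM_\mu^n h(x,\xi) - K_\mu^B(x,\xi)\,\boM_\mu^n h(e,\xi)\bigr).
\]
Inserting the expansion~\eqref{eq:itereboMn} and setting $h_n(y,\xi):=h(a_n(\xi)^{-1}y, a_n(\xi)^{-1}\xi)$, this rearranges to
\[
  A^n h(x,\xi) = \frac{1}{c_n(\xi)}\sum_{y\in \Lambda_n(\xi)\setminus\Lambda'_{n-1}(\xi)}\bigl[G_\mu(x,y;\Lambda'_{n-1}(\xi)) - K_\mu^B(x,\xi)G_\mu(e,y;\Lambda'_{n-1}(\xi))\bigr] h_n(y,\xi).
\]

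Proposition~\ref{prop:ineq_ancona_strong}, applied exactly as in the proof of Lemma~\ref{lem:Kmu_pointfixe_unique} to the pair $x\mapsto G_\mu(x,y;\Lambda'_{n-1}(\xi))$ and $x\mapsto K_\mu^B(x,\xi)$, yields
\[
  \bigl|G_\mu(x,y;\Lambda'_{n-1}(\xi)) - K_\mu^B(x,\xi)G_\mu(e,y;\Lambda'_{n-1}(\xi))\bigr| \leq Ce^{-\rho n}K_\mu^B(x,\xi)\,G_\mu(e,y;\Lambda'_{n-1}(\xi))
\]
for some $\rho>0$ uniform over $\mu\in\bar U^+$. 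Bounding $|h(y',\xi')|\leq \norm{h}_{\boC^0_\mu}G_\mu(y',e) \asymp \norm{h}_{\boC^0_\mu}K_\mu^B(y',\xi')$ via Remark~\ref{rmq:equiv}, the remaining sum collapses to $c_n(\xi)\norm{h}_{\boC^0_\mu}$, giving the sup-norm contraction $|A^n h(x,\xi)|\leq Ce^{-\rho n}K_\mu^B(x,\xi)\,\norm{h}_{\boC^0_\mu}$. Once the analogous Hölder estimate is in hand, this is a contraction in $\boC^\beta_\mu$; to pass to $\boC^\beta=\boC^\beta_{\bar\mu}$ I would factor $A^n = A^{n-1}\circ A$, use the smoothing $A:\boC^\beta\to\boC^\beta_\mu$ coming from Lemma~\ref{lem:boMmuprime_bien_defini}, and absorb the continuous inclusion $\boC^\beta_\mu\hookrightarrow\boC^\beta$ (which has norm $1$ since $G_\mu\leq G_{\bar\mu}$) into the constant.

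The main technical obstacle is the Hölder control on $\boC^\beta_\mu$. For $\xi,\xi'$ with $d(\xi,\xi')=e^{-m}$, I would separate two regimes. When $m$ exceeds a suitable multiple of $n$, the data $a_n(\xi)$, $\Lambda'_{n-1}(\xi)$, $c_n(\xi)$, and the relative Green functions all agree at $\xi$ and $\xi'$, so the whole $\xi$-dependence of the displayed formula for $A^n h(x,\xi)$ sits inside $h_n$; the Hölder norm of $h$ combined with the crude distortion bound~\eqref{eq:stupid_contraction} $d(a_n(\xi)^{-1}\xi,a_n(\xi)^{-1}\xi')\leq e^{|a_n(\xi)|}d(\xi,\xi')$ gives the estimate, at the cost of a factor $e^{\beta nN}$ that is absorbed into $e^{-\rho n}$ by choosing $\beta$ small. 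When $m$ is only moderately larger than $N$, the sup-norm bound already proven, applied separately to $A^n h(x,\xi)$ and $A^n h(x,\xi')$, combined with $d(\xi,\xi')^\beta \geq e^{-\beta m}$, yields a comparable estimate. Combining the two regimes, as in the proof of Lemma~\ref{lem:LY}, closes the Hölder part and completes the argument.
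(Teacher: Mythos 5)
Your set-up, explicit formula, and sup-norm bound are correct and track the paper's proof closely. In particular, the identity $\boM_\mu^n K_\mu^B(x,\xi)=c_n(\xi)K_\mu^B(x,\xi)$ lets you simplify the quotient-rule expression for the differential into a clean single sum, and your application of Proposition~\ref{prop:ineq_ancona_strong} to the pair $G_\mu(\cdot,y;\Lambda'_{n-1}(\xi))$, $K_\mu^B(\cdot,\xi)$ (using $K_\mu^B(e,\xi)=1$) gives the pointwise bound on the bracket exactly as needed; the paper arrives at the same place via a symmetric ``cross-term'' inequality on the kernel $F_n$, but the two computations are equivalent. The reduction from $\boC^\beta$ to $\boC^\beta_\mu$ by peeling off one factor $A=D\boL_\mu(K_\mu^B)$ is also the paper's move.

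There is, however, a genuine gap in your H\"older sketch. You assert that when $m$ exceeds a suitable multiple of $n$, ``the data $a_n(\xi)$, $\Lambda'_{n-1}(\xi)$, $c_n(\xi)$, and the relative Green functions all agree at $\xi$ and $\xi'$, so the whole $\xi$-dependence \ldots sits inside $h_n$''. The claim about $a_n$, $\Lambda'_{n-1}$ and the relative Green functions is correct, but $c_n(\xi)$ does \emph{not} coincide with $c_n(\xi')$: one has $c_n(\xi)=\boM_\mu^n K_\mu^B(e,\xi)=K_\mu^B(a_n(\xi)^{-1},a_n(\xi)^{-1}\xi)$, which still depends on $\xi$ through the argument $a_n^{-1}\xi$ even after the combinatorial data agree. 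Likewise $K_\mu^B(x,\xi)$ in your bracket varies with $\xi$. So the $\xi$-dependence of your formula for $A^n h(x,\xi)$ sits in \emph{three} places, not one: $c_n(\xi)$, $K_\mu^B(x,\xi)$, and $h_n(y,\xi)$. The paper handles all three explicitly, writing the difference via $abc-a'b'c'=(a-a')bc+a'(b-b')c+a'b'(c-c')$ and controlling the first two factors using the H\"older regularity of $K_\mu^B$ itself (Lemma~\ref{lem:Kmuprime_Cralpha}, which gives $|K_\mu^B(\cdot,\eta)-K_\mu^B(\cdot,\eta')|\leq C d(\eta,\eta')^\beta K_\mu^B(\cdot,\eta)$). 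Your argument would fail as written; you need to insert these two extra H\"older estimates before the crude distortion bound~\eqref{eq:stupid_contraction} and the factor $e^{\beta nN}$ come in. Once that is done, the rest of your two-regime argument goes through.
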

\begin{proof}
Denoting by $I$ the inclusion of $\boC^\beta_\mu$ in $\boC^\beta$, we have $D
\boL_\mu^{n+1}(K_\mu^B) = I \circ D\boL_\mu^{n}(K_\mu^B) \circ
D\boL_\mu(K_\mu^B)$ where the operator on the right is well defined and maps
continuously $\boC^\beta$ into $\boC^\beta_\mu$ by
Corollary~\ref{cor:boMmu_analytic}. Thus, it suffices to show that
\begin{equation}
\label{eq:derivee_controlee}
  \norm{D \boL_\mu^n(K_\mu^B) : \boC^\beta_\mu \to \boC^\beta_\mu} \leq C e^{-\rho n}.
\end{equation}

For ease of notations, we will write $\bfx$ for a pair $(x,\xi)$, and
$e_\bfx=(e,\xi)$. By Remark~\ref{rmq:equiv}, we can define equivalent norms
on $\boC^0_\mu$ and $\boC^\beta_\mu$ by
\begin{equation*}
  \norm{f}_{\bar \boC^0_\mu} = \sup_{\bfx} \abs{f(\bfx)}/K_\mu^B(\bfx),
\end{equation*}
and
\begin{equation*}
  \norm{f}'_{\bar \boC^\beta_\mu} = \sup_{d(\xi,\xi') \leq e^{-N}}
  \sup_{x\in\Lambda(\xi)}
d(\xi,\xi')^{-\beta} \abs{f(x,\xi)-f(x,\xi')}/K_\mu^B(x,\xi).
\end{equation*}
In this last equation, we could have use $K_\mu^B(x,\xi)$ or
$K_\mu^B(x,\xi')$ since their ratio is bounded, again by
Remark~\ref{rmq:equiv}. It suffices to prove the
inequality~\eqref{eq:derivee_controlee} for the norm $\bar \boC^\beta_\mu$,
which is equivalent to the original one.

We have $\boL_\mu^n f(\bfx) = \boM_\mu^n f(\bfx)/\boM_\mu^n f(e_\bfx)$.
Consequently,
\begin{equation}
\label{eq:formule_derivee}
  D\boL_\mu^n(K_\mu^B) f (\bfx) = \frac
    {\boM_\mu^n f(\bfx) \cdot \boM_\mu^n K_\mu^B(e_\bfx) - \boM_\mu^n f(e_\bfx) \boM_\mu^n K_\mu^B(\bfx)}
    { (\boM_\mu^n K_\mu^B(e_\bfx))^2}.
\end{equation}

By~\eqref{eq:itereboMn}, the operator $\boM_\mu^n$ can be written as
\begin{equation}
\label{eq:express_kernel}
  \boM_\mu^n f(\bfx) = \sum_\bfy F_n(\bfx,\bfy) f(\bfy)
\end{equation}
where the kernel $F_n$ is a relative Green function, given by
\begin{equation*}
  F_n((x,\xi), (y,\eta))=1_{\eta=a_n(\xi)^{-1}\xi}
    1_{a_n(\xi)y \in \Lambda_n(\xi) \setminus \Lambda'_{n-1}(\xi)}G_\mu(x, a_n(\xi)y; \Lambda'_{n-1}(\xi)).
\end{equation*}
When we write $F_n((x,\xi), (y,\eta))$, we will implicitly only consider
those pairs where $F_n$ can be nonzero, i.e., those where
$\eta=a_n(\xi)^{-1}\xi$ and $a_n(\xi)y \in \Lambda_n(\xi) \setminus
\Lambda'_{n-1}(\xi)$.

The following property of $F_n$ follows from
Proposition~\ref{prop:ineq_ancona_strong}: There exist $\rho_0>0$ and $C>0$
such that, for all $\bfx$, and all $\bfy, \bfz$ with $F_n(\bfx,\bfy)\neq 0$,
$F_n(\bfx, \bfz) \neq 0$,
\begin{equation*}
  \abs*{\frac{ F_n(\bfx,\bfz)/F_n(e_{\bfx},\bfz)}{F_n(\bfx,\bfy)/F_n(e_\bfx,\bfy)}-1} \leq C e^{-\rho_0 n}.
\end{equation*}
In particular,
\begin{equation}
\label{eq:ineq_Fn_produit}
  \abs{ F_n(e_\bfx,\bfy) F_n(\bfx,\bfz) - F_n(\bfx,\bfy) F_n(e_\bfx, \bfz)}
    \leq C e^{-\rho_0 n} F_n(\bfx,\bfy) F_n(e_{\bfx},\bfz).
\end{equation}

By~\eqref{eq:formule_derivee},
\begin{equation}
\label{eq:lmkjqmlksdjf}
  D\boL_\mu^n(K_\mu^B) f (\bfx) = \frac
  { \sum_{\bfy,\bfz} (F_n(\bfx,\bfy) F_n(e_\bfx,\bfz)- F_n(e_\bfx,\bfy) F_n(\bfx,\bfz))f(\bfy) K_\mu^B(\bfz)}
  { \paren*{ \sum_\bfy F_n(e_\bfx, \bfy) K_\mu^B(\bfy)}^2}.
\end{equation}
With~\eqref{eq:ineq_Fn_produit}, we get
\begin{equation*}
  \abs{D\boL_\mu^n(K_\mu^B) f(\bfx)} \leq C e^{-\rho_0 n} \frac
  {\sum_{\bfy,\bfz} F_n(\bfx,\bfy)F_n(e_\bfx,\bfz) \abs{f(\bfy)} K_\mu^B(\bfz)}
  {\paren*{ \sum_\bfy F_n(e_\bfx,\bfy) K_\mu^B(\bfy)}^2}.
\end{equation*}
The sum on the numerator can be factored. The part with $\bfz$ cancels out
one of the factors of the denominator. In the $\bfy$ part, we have by
definition $\abs{f(\bfy)} \leq \norm{f}_{\bar \boC^0_\mu} K_\mu^B(\bfy)$.
Hence,
\begin{align*}
  \abs{D\boL_\mu^n(K_\mu^B) f (\bfx)} &
  \leq C \norm{f}_{\bar \boC^0_\mu} e^{-\rho_0 n} \frac
  {\sum_\bfy F_n(\bfx,\bfy) K_\mu^B(\bfy)}{\sum_\bfy F_n(e_\bfx, \bfy) K_\mu^B(\bfy)}
  = C \norm{f}_{\bar \boC^0_\mu} e^{-\rho_0 n} \boL_\mu^n K_\mu^B(\bfx)
  \\&
  = C \norm{f}_{\bar \boC^0_\mu} e^{-\rho_0 n} K_\mu^B(\bfx),
\end{align*}
as $K_\mu^B$ is a fixed point of $\boL_\mu$. This shows that
$\norm{D\boL_\mu^n(K_\mu^B) f}_{\bar \boC^0_\mu} \leq C e^{-\rho_0 n}
\norm{f}_{\bar \boC^0_\mu}$, as claimed.

Let us now control its H\"older norm. Let $\bfx=(x,\xi)$ and
$\bfx'=(x,\xi')$, with $d(\xi,\xi')=e^{-k}$ for some $k\geq N$. We should
bound $\abs{D\boL_\mu^n(K_\mu^B) f (\bfx) - D\boL_\mu^n(K_\mu^B) f (\bfx')}/(
d(\xi,\xi')^{\beta}K_\mu^B(\bfx))$.

If $k\leq (n+1)N$, we write
\begin{align*}
  \abs{D\boL_\mu^n(K_\mu^B) f (\bfx) - D\boL_\mu^n(K_\mu^B) f (\bfx')}
  &\leq \abs{D\boL_\mu^n(K_\mu^B) f (\bfx)} + \abs{D\boL_\mu^n(K_\mu^B) f (\bfx')}
  \\&\leq \norm{D\boL_\mu^n(K_\mu^B) f}_{\bar \boC^0_\mu} (K_\mu^B(\bfx)+K_\mu^B(\bfx'))
  \\&\leq C \norm{D\boL_\mu^n(K_\mu^B) f}_{\bar \boC^0_\mu} K_\mu^B(\bfx),
\end{align*}
where the last inequality uses the fact that $K_\mu^B(\bfx') \leq C
K_\mu^B(\bfx)$, explained in Remark~\ref{rmq:equiv}. Therefore,
\begin{align*}
  \abs{D\boL_\mu^n(K_\mu^B) f (\bfx) - D\boL_\mu^n(K_\mu^B) f (\bfx')} / (d(\xi,\xi')^{\beta}K_\mu^B(\bfx))
  &\leq C e^{\beta (n+1) N} \norm{D\boL_\mu^n(K_\mu^B) f}_{\bar \boC^0_\mu}
  \\&\leq C e^{\beta (n+1) N} e^{-\rho_0 n} \norm{f}_{\bar \boC^0_\mu}.
\end{align*}
If $\beta$ is small enough (i.e., $\beta < \rho_0/N$), this is exponentially
small as desired.

Assume now that $k>(n+1)N$. Then $a_n(\xi)=a_n(\xi')$ (we will simply denote
it by $a_n$). Moreover, the two horofunctions given by $\eta= a_n^{-1}\xi$
and $\eta'=a_n^{-1}\xi'$ coincide on the ball of radius $N$. Thus, the
summation set in~\eqref{eq:express_kernel} is the same for $\bfx$ and
$\bfx'$. Moreover, for any $y$, we have $F_n((x,\xi), (y,\eta))=F_n((x,\xi'),
(y,\eta'))$. Taking $\xi$, $\xi'$ (and therefore $\eta$ and $\eta'$) as
fixed, we will simply write $\tilde F_n(x,y)$ for this common quantity. Let
$u(\eta) = 1/(\sum_y \tilde F_n(e, y) K_\mu^B(y,\eta))^2$, and define
$u(\eta')$ in the same way with $\eta'$. By~\eqref{eq:lmkjqmlksdjf},
\begin{equation*}
  D\boL_\mu^n(K_\mu^B) f (\bfx)
  = \sum_{y,z} (\tilde F_n(x,y) \tilde F_n(e,z)- \tilde F_n(e,y) \tilde F_n(x,z))f(y, \eta) K_\mu^B(z, \eta) u(\eta).
\end{equation*}
The same formula holds for $\bfx'$. Hence,
\begin{multline}
\label{eq:ineq_diff}
  \abs*{D\boL_\mu^n(K_\mu^B) f (\bfx) - D\boL_\mu^n(K_\mu^B) f (\bfx')} =
  \\
  \abs*{\sum_{y,z} \paren[\big]{\tilde F_n(x,y) \tilde F_n(e,z)- \tilde F_n(e,y) \tilde F_n(x,z)}
      \!\cdot\! \paren[big]{f(y, \eta) K_\mu^B(z, \eta) u(\eta) - f(y, \eta')K_\mu^B(z, \eta') u(\eta')}}.
\end{multline}

We use the equality $abc-a'b'c' = (a-a')bc+a'(b-b')c+a'b'(c-c')$ to bound the
last difference. We have
\begin{equation*}
  \abs{f(y, \eta)-f(y, \eta')} \leq d(\eta,\eta')^{\beta} \norm{f}_{\bar
\boC^\beta_\mu} K_\mu^B(y,\eta).
\end{equation*}
As $K_\mu^B \in \boC^\beta_\mu = \bar \boC^\beta_\mu$ by
Lemma~\ref{lem:Kmuprime_Cralpha},
\begin{equation*}
  \abs{K_\mu^B(z, \eta)- K_\mu^B(z, \eta')} \leq C d(\eta, \eta')^{\beta} K_\mu^B(z, \eta).
\end{equation*}
Finally, $u(\eta)=1/(\boM_\mu^n K_\mu^B(e, \xi))^2$. By~\eqref{eq:itere_Kmu},
this equals $1/K_\mu^B(a_n^{-1}, a_n^{-1}\xi)^2 = 1/K_\mu^B(a_n^{-1},
\eta)^2$. We have $\abs{ K_\mu^B(a_n^{-1}, \eta) - K_\mu^B(a_n^{-1}, \eta')}
\leq C d(\eta,\eta')^\beta K_\mu^B(a_n^{-1}, \eta)$ since $K_\mu^B\in
\boC^\beta_\mu = \bar \boC^\beta_\mu$. Moreover, the ratio $K_\mu^B(a_n^{-1},
\eta)/K_\mu^B(a_n^{-1}, \eta')$ is uniformly bounded by
Remark~\ref{rmq:equiv}. Therefore,
\begin{align*}
  \abs{u(\eta)-u(\eta')}&
  =\abs*{\frac
  {(K_\mu^B(a_n^{-1}, \eta) - K_\mu^B(a_n^{-1}, \eta'))(K_\mu^B(a_n^{-1}, \eta) + K_\mu^B(a_n^{-1}, \eta'))}
  {K_\mu^B(a_n^{-1}, \eta)^2 K_\mu^B(a_n^{-1}, \eta')^2}
  }
  \\&
  \leq C d(\eta,\eta')^\beta / K_\mu^B(a_n^{-1}, \eta)^2
  = C d(\eta, \eta')^\beta u(\eta).
\end{align*}
Combining these inequalities, we obtain
\begin{equation*}
  \abs{f(y, \eta) K_\mu^B(z, \eta) u(\eta) - f(y, \eta')K_\mu^B(z, \eta') u(\eta')}
  \leq C d(\eta,\eta')^\beta \norm{f}_{\bar \boC^\beta_\mu}
  K_\mu^B(y, \eta) K_\mu^B(z, \eta) u(\eta).
\end{equation*}
Together with~\eqref{eq:ineq_diff}, this yields
\begin{multline*}
  \abs*{D\boL_\mu^n(K_\mu^B) f (\bfx) - D\boL_\mu^n(K_\mu^B) f (\bfx')}
  \\
  \leq C d(\eta, \eta')^\beta \norm{f}_{\bar \boC^\beta_\mu}
      \sum_{y,z} \abs[\big]{\tilde F_n(x,y) \tilde F_n(e,z)- \tilde F_n(e,y) \tilde F_n(x,z)}
      K_\mu^B(y, \eta) K_\mu^B(z, \eta) u(\eta).
\end{multline*}
The sum is precisely the sum we have handled in the control of the sup norm,
in~\eqref{eq:lmkjqmlksdjf}, with $f$ replaced by $K_\mu^B$. We have shown
that it is bounded by $C e^{-\rho_0 n} K_\mu^B(\bfx)$. Finally, we get
\begin{equation*}
  \abs*{D\boL_\mu^n(K_\mu^B) f (\bfx) - D\boL_\mu^n(K_\mu^B) f (\bfx')} \leq
  C e^{-\rho_0 n} d(\eta,\eta')^\beta K_\mu^B(\bfx) \norm{f}_{\bar \boC^\beta_\mu}.
\end{equation*}
As $\eta = a_n^{-1} \xi$ and $\eta'=a_n^{-1}\xi'$ with $\abs{a_n}\leq nN$, we
have $d(\eta,\eta') \leq e^{nN} d(\xi,\xi')$
by~\eqref{eq:stupid_contraction}. Therefore,
\begin{equation*}
  \abs*{D\boL_\mu^n(K_\mu^B) f (\bfx) - D\boL_\mu^n(K_\mu^B) f (\bfx')} \leq
  C e^{\beta n N} e^{-\rho_0 n} d(\xi,\xi')^\beta K_\mu^B(\bfx) \norm{f}_{\bar \boC^\beta_\mu}.
\end{equation*}
If $\beta<\rho_0/N$, this is again exponentially small as desired.
\end{proof}

Let us consider the transformation $\boQ : (\mu, f) \mapsto \boL_\mu f - f$,
defined on a neighborhood of $(\mu_0, K_{\mu_0}^B)$ in $U \times \boC^\beta$
for a suitably small $\beta$. It satisfies $\boQ(\mu_0, K_{\mu_0}^B)=0$ as
$K_{\mu_0}^B$ is a fixed point of $\boL_{\mu_0}$ by
Lemma~\ref{lem:Kmu_pointfixe_unique}. Moreover, $\partial_f \boQ(\mu_0,
K_{\mu_0}^B) = D \boL_{\mu_0}(K_{\mu_0}^B) - \Id$ is invertible, by
Lemma~\ref{lem:diff_contracte}. The implicit function theorem, in its
analytic version, shows the existence of an analytic family $\mu\mapsto
f_\mu$, for $\mu$ close to $\mu_0$, with $f_{\mu_0}=K_{\mu_0}^B$, and such
that $\boQ (\mu, f_\mu)=0$, i.e., $\boL_\mu f_\mu = f_\mu$.

\begin{lem}
\label{lem:fmu_positive} For $\mu\in U^+$ close enough to $\mu_0$, the
function $f_\mu$ is everywhere positive.
\end{lem}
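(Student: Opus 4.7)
The plan is to identify $f_\mu$ with the Martin kernel $K_\mu^B$ for $\mu\in U^+$ close to $\mu_0$; since $K_\mu^B$ is everywhere positive, this will immediately give the conclusion. The identification proceeds via the uniqueness clause of the implicit function theorem that produced $f_\mu$.

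First I would note that for $\mu\in U^+$ close enough to $\mu_0$ the relation $\mu(g)\geq \mu_0(g)/2$ on $\supp \mu_0$ forces $\mu$ to be admissible (since $\supp \mu_0$ generates $\Gamma$). Hence $K_\mu^B$ is defined, lies in $\boC^\beta$ by Lemma~\ref{lem:Kmuprime_Cralpha}, and is a positive fixed point of $\boL_\mu$ by Lemma~\ref{lem:Kmu_pointfixe_unique}. Thus there is a natural candidate for $f_\mu$ among the fixed points of $\boL_\mu$.

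The heart of the argument is to show that $\mu\mapsto K_\mu^B$ is continuous at $\mu_0$ in $\boC^{\beta'}$ for some $\beta' <\beta$. Pointwise continuity $K_\mu^B(x,\xi)\to K_{\mu_0}^B(x,\xi)$ would come from analyticity of the individual Green functions $\mu\mapsto G_\mu(x,y)$ (which converge as power series on $\bar U$ thanks to the uniform spectral radius bound), combined with the strong Ancona inequalities of Proposition~\ref{prop:ineq_ancona_strong}, which make the limit $G_\mu(x,y_n)/G_\mu(e,y_n)\to K_\mu^B(x,\xi)$ exponentially fast and uniformly in $\mu\in \bar U^+$. To upgrade pointwise convergence to $\boC^0$-convergence in the weighted norm, I would split the domain into $\{x\in\Lambda(\xi): \abs{x}\leq R\}$, which is compact (finite set in $x$, compact in $\xi$) so pointwise convergence becomes uniform, and $\{\abs{x}>R\}$, on which the tail $K_\mu^B(x,\xi)/G_{\bar\mu}(x,e)\asymp G_\mu(x,e)/G_{\bar\mu}(x,e)$ is uniformly small, because $\bar\mu$ has strictly smaller spectral radius than every $\mu\in\bar U^+$ and the resulting Green-function gap is uniform. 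Together with the uniform $\boC^\beta$-bound on $\{K_\mu^B\}_{\mu\in\bar U^+}$ coming from Lemma~\ref{lem:Kmuprime_Cralpha}, interpolation then yields $\boC^{\beta'}$-convergence for any $\beta'<\beta$.

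Finally, the entire implicit function setup of Lemmas~\ref{lem:boMmuprime_bien_defini}--\ref{lem:diff_contracte} holds for any sufficiently small H\"older exponent, so I can rerun it at level $\boC^{\beta'}$. The resulting $f_\mu$ is the unique fixed point of $\boL_\mu$ in a small $\boC^{\beta'}$-ball around $K_{\mu_0}^B$; by the continuity established in the previous step, $K_\mu^B$ lies in this ball as soon as $\mu$ is close enough to $\mu_0$. Hence $f_\mu=K_\mu^B$, which is strictly positive. The main obstacle is the uniform tail estimate in the $\boC^0$-convergence step: one must exploit the strict inequality between the spectral radius of $\bar\mu$ and that of $\mu\in\bar U^+$ to obtain exponential decay of $G_\mu/G_{\bar\mu}$ with constants independent of $\mu$, so that the non-compactness of $\Lambda(\xi)$ does not destroy uniform convergence.
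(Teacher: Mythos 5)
Your strategy is genuinely different from the paper's and considerably more indirect. The paper proves the lemma directly: writing $f_\mu = \boM_\mu f_\mu / \boM_\mu f_\mu(e,\cdot)$, it establishes a uniform lower bound $\boM_\mu f(x,\xi) \geq c\, G_\mu(x,e)$ for any $f$ sufficiently $\boC^\beta$-close to $K_{\mu_0}^B$, by picking a single exit point $y(\xi)$ just outside $\Lambda'(\xi)$ on a geodesic toward $a(\xi)$ and showing, via Proposition~\ref{prop:ineq_ancona_strong} and Harnack, that its contribution to the sum in~\eqref{eq:defbommu} is already bounded below by a fixed multiple of $G_\mu(x,e)$. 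This never requires any continuity of $\mu\mapsto K_\mu^B$. You instead propose to prove $f_\mu=K_\mu^B$ first, by showing continuity of $\mu\mapsto K_\mu^B$ in some $\boC^{\beta'}$ norm so that $K_\mu^B$ falls inside the uniqueness neighborhood of the implicit function theorem, and to deduce positivity from that. If made rigorous this would yield both the positivity and the identification $f_\mu=K_\mu^B$ at once (the paper obtains the identification a step later, by combining the present lemma with Lemma~\ref{lem:Kmu_pointfixe_unique} in the proof of Theorem~\ref{thm:entropy_analytic}), but it is a heavier argument.

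There is a genuine gap in the step you yourself flag as the main obstacle. You justify the uniform smallness of $G_\mu(x,e)/G_{\bar\mu}(x,e)$ for $|x|$ large by saying that ``$\bar\mu$ has strictly smaller spectral radius than every $\mu\in\bar U^+$.'' This is backwards: $\bar\mu$ dominates every $\mu\in\bar U^+$ coefficientwise, hence has the larger spectral radius. Moreover, even a strict inequality of spectral radii does not by itself give a uniform exponential decay of the Green-function ratio. What does work is the coefficientwise comparison: for $\mu$ near $\mu_0$ one has $\mu\leq \lambda\bar\mu$ pointwise for some fixed $\lambda<1$ (using $\bar\mu(g)=\mu_0(g)+\epsilon_0$ for $g\in F$), whence
\begin{equation*}
  G_\mu(x,e)\leq G_{\lambda\bar\mu}(x,e)=\sum_n \lambda^n\,\bar\mu^{*n}(x),
\end{equation*}
and since $\bar\mu^{*n}(x)=0$ for $n<|x|/M$ with $M=\max_{g\in F}|g|$, one gets $G_\mu(x,e)/G_{\bar\mu}(x,e)\leq \lambda^{|x|/M}$ uniformly. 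With that repair your tail bound is saved; the remaining steps (uniform pointwise convergence of $K_\mu^B$ on compacts via the uniform exponential rate in Proposition~\ref{prop:ineq_ancona_strong}, and the interpolation from $\boC^0$ plus a uniform $\boC^\beta$ bound to $\boC^{\beta'}$) are plausible but would still need to be spelled out in detail. As written, the proposal is not a complete proof, and the direct lower-bound argument is substantially simpler.
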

\begin{proof}
As $\mu\mapsto f_\mu$ is analytic, it is continuous. For $\mu$ close to
$\mu_0$, the function $f_\mu$ is close in $\boC^\beta$ to $f_0=K_{\mu_0}^B$.
As
\begin{equation*}
  f_\mu(x,\xi) = \boL_\mu f_\mu(x,\xi) = \boM_\mu f_\mu(x,\xi) / \boM_\mu
  f_\mu(e,\xi),
\end{equation*}
it suffices to show that $\boM_\mu f$ is positive if $f$ is close enough to
$f_0$ in $\boC^\beta$, and $\mu\in U^+$.

Let $\epsilon>0$. Any $f$ close enough to $f_0$ in $\boC^\beta$ satisfies
$f(x,\xi)\geq f_0(x,\xi)  - \epsilon G_{\bar \mu}(x,e)$ by definition of the
norm. We obtain
\begin{equation}
\label{eq:low_bound}
  \boM_\mu f(x,\xi) \geq \boM_{\mu} f_0(x,\xi) -\epsilon \boM_{\mu} G_{\bar\mu}(x,e)
  \geq \boM_{\mu} f_0(x,\xi) - C\epsilon G_\mu(x,e).
\end{equation}
For the last inequality, we used the fact that $(x,\xi)\mapsto
G_{\bar\mu}(x,e)$ belongs obviously to $\boC^0$, and the fact that $\boM_\mu$
maps $\boC^0$ into $\boC^0_\mu$ by Lemma~\ref{lem:boMmuprime_bien_defini}.

Let $\xi\in \partial_B \Gamma$. Consider a point $y=y(\xi)$, close to a
geodesic from $e$ to $a(\xi)$, which does not belong to $\Lambda'(\xi)$ but
such that $G_{\mu_0}(e, y; \Lambda'(\xi))>0$, i.e., $y$ is close enough to
the boundary of $\Lambda'(\xi)$ to be an exit point of the random walk in
$\Lambda'(\xi)$. It satisfies $G_\mu(e,y;\Lambda'(\xi))>0$ as $\mu\geq
\mu_0/2$ thanks to the definition of $U$. The function $x\mapsto G_\mu(x,y;
\Lambda'(\xi))$ is positive and harmonic on $\Lambda(\xi)$. Therefore,
Proposition~\ref{prop:ineq_ancona_strong} applied to this function and to
$x\mapsto G_\mu(x,y)$ shows that
\begin{equation*}
  \frac{G_\mu(x,y)/G_\mu(e,y)}{G_\mu(x,y; \Lambda'(\xi)) / G_\mu(e,y; \Lambda'(\xi))} \leq C,
\end{equation*}
uniformly in $x\in\Lambda(\xi)$. The quantities $G_\mu(e,y; \Lambda'(\xi))$
and $G_\mu(e,y)$ are bounded from above and from below, uniformly. We obtain
$G_\mu(x,y; \Lambda'(\xi)) \geq C^{-1} G_\mu(x,y)$, which is $\geq C^{-1}
G_\mu(x,e)$ thanks to Harnack inequalities (as $d(e,y)$ is uniformly
bounded).

By the definition~\eqref{eq:defbommu} of $\boM_\mu$,
\begin{equation*}
  \boM_{\mu} f_0(x,\xi) \geq G_\mu(x,y(\xi); \Lambda'(\xi)) f_0(a(\xi)^{-1}y, a(\xi)^{-1}\xi)
  \geq C^{-1} G_\mu(x,e) \cdot C^{-1},
\end{equation*}
as $f_0$ is uniformly bounded from below on points which are a bounded
distance away from the identify.

Finally, we obtain from~\eqref{eq:low_bound}
\begin{equation*}
  \boM_\mu f(x,\xi) \geq C^{-1} G_\mu(x,e) - C\epsilon G_\mu(x,e).
\end{equation*}
If $\epsilon$ is small enough, this is bounded from below by a positive
multiple of $G_\mu(x,e)$, uniformly in $(x,\xi)$.
\end{proof}

We can now prove that the entropy depends analytically on the measure.

\begin{proof}[Proof of Theorem~\ref{thm:entropy_analytic}]
By Lemma~\ref{lem:fmu_positive}, there exists a neighborhood $V_0$ of $\mu_0$
in $U\subseteq \boP(F)$ such that, for $\mu \in V_0 \cap U^+$, the function
$f_\mu$ is a fixed point of $\boL_\mu$ in $\boC^\beta$, everywhere positive.
By Lemma~\ref{lem:Kmu_pointfixe_unique}, it has to coincide with $K_\mu^B$.
Moreover, $\mu\mapsto f_\mu$ is analytic from $V_0$ to $\boC^{\beta}$ for
some $\beta>0$.

We have also constructed in Theorem~\ref{thm:stationary_analytic} an analytic
map $\mu \mapsto \Phi(\mu)$ on a neighborhood $V_1$ of $\mu_0$, taking values
in $(C^\beta)^*$, which corresponds for $\mu \in \boP_1^+(F)$ to the
integration against a $\mu$-stationary measure $\nu_\mu$ on
$\partial_B\Gamma$.

Let $V=V_0\cap V_1$. For $\mu \in V\cap \boP_1^+$, the integral
expression~\eqref{eq:h_integral} of the entropy gives
\begin{align*}
  h(\mu) & = -\sum_{x\in F} \mu(x) \int_{\partial_B \Gamma} \log K_\mu^B(x^{-1},\xi) \dd\nu_\mu(\xi)
  \\&
  = - \sum_{x\in F} \mu(x) \cdot \Phi(\mu)( \xi \mapsto \log f_\mu(x^{-1},\xi)).
\end{align*}
The expression on the second line is well defined and analytic on $V$, giving
the desired analytic extension of the entropy. Indeed, by definition of
$\boC^\beta$, each function $\xi \mapsto f_\mu(x^{-1},\xi)$ (with $x$ fixed)
belongs to $C^\beta$ and depends analytically on $\mu$. Composing with the
logarithm, applying the analytic linear form $\Phi(\mu)$, and doing a finite
summation over $F$, everything remains analytic.
\end{proof}

\section{Central limit theorem}

\label{sec:CLT}

In this section, we prove the central limit theorem, Theorem~\ref{thm:CLT}.
Once Proposition~\ref{prop:spectral_description} is available, it follows
using the method of Le Page~\cite{lepage_trick} as described for instance
in~\cite{bougerol_lacroix} or~\cite{benoist_quint_livre}. Although the
details are now standard, we will sketch the argument especially since the
lack of true contraction on $\partial'_B\Gamma$ creates possible periodicity
problems. We will prove the result for measures with an exponential moment of
order $\alpha>0$, as announced in Remark~\ref{rmk:infinite_support}.

For $\mu\in \boP_1^+(\alpha)$, we start from the operator $L_\mu$, acting on
$C^\beta(\partial'_B \Gamma)$ for some small enough $\beta>0$ by the formula
$L_\mu u(\xi) = \sum_g \mu(g) u(g\xi)$. By
Proposition~\ref{prop:spectral_description}, it has a simple eigenvalue at
$1$. For $t\in \R$, define a perturbed operator $L_{\mu,t}$ by
\begin{equation*}
  L_{\mu, t} u(\xi) = \sum_{g\in \Gamma}\mu(g) e^{\ic t c_B(g, \xi)} u(g\xi),
\end{equation*}
where $c_B$ is the Busemann cocycle~\eqref{eq:def_cB}. Then
\begin{equation*}
  L_{\mu,t}^2 u(\xi) = \sum_{g_1, g_2}\mu(g_1) \mu(g_2) e^{\ic t c_B(g_1, g_2 \xi)}e^{\ic t c_B(g_2, \xi)} u(g_1 g_2 \xi)
  =\sum_g \mu^{*2}(g) e^{\ic t c_B(g,\xi)} u(g\xi),
\end{equation*}
thanks to the cocycle equation~\eqref{eq:cocycle_cB} for $c_B$. Iterating
this argument, one gets
\begin{equation*}
  L_{\mu,t}^n u(\xi) = \sum_g \mu^{*n}(g) e^{\ic t c_B(g,\xi)} u(g\xi).
\end{equation*}
In particular, if $Z_n$ denotes the right random walk driven by $\mu$, the
characteristic function of $c_B(Z_n,\xi)$ can be expressed for all
$\xi\in\partial'_B\Gamma$ as follows:
\begin{equation}
\label{eq:express_char}
  \Ebb( e^{\ic t c_B(Z_n,\xi)}) = L_{\mu,t}^n 1(\xi).
\end{equation}

Fix once and for all a point $\xi \in \partial'_B\Gamma$. We will prove a
central limit theorem for $c_B(\cdot,\xi)$, i.e., prove that $(c_B(Z_n,\xi)-
n\ell)/\sqrt{n}$ converges in distribution to a Gaussian random variable. By
L\'evy's theorem, it suffices to prove the pointwise convergence of the
characteristic functions. It will be obtained from~\eqref{eq:express_char}
and the good spectral properties of $L_{\mu,t}$.

One checks easily that the map $t\mapsto L_{\mu,t}$ is analytic. By
Proposition~\ref{prop:spectral_description}, the operator $L_{\mu,0}=L_\mu$
has a simple eigenvalue at $1$, finitely many additional simple eigenvalues
$\rho$ of modulus $1$, and the rest of its spectrum is contained in a disk of
radius $<1$. This spectral picture persists for small $t$, see for
instance~\cite{kato_pe}: there are spectral projections $\Pi_{1,t}$ and
$\Pi_{\rho,t}$ and $\Pi_{<1,t}$, all depending analytically on $t$, such that
\begin{equation*}
  L_{\mu,t}^n = L_{\mu,t}^n \Pi_{1,t} + \sum_{\rho\not=1}L_{\mu,t}^n \Pi_{\rho,t}
  + L_{\mu,t}^n \Pi_{<1,t}.
\end{equation*}
(All these projections also depend on $\mu$, we suppress it from the
notations for convenience.) Moreover, $\Pi_{1,t}$ is a one-dimensional
projection, and $L_{\mu,t}$ acts on its image as the multiplication by the
corresponding eigenvalue $\lambda(t)=\lambda_\mu(t)$. This eigenvalue also
depends analytically on $t$.

Denote by $\ell$ the escape rate of the random walk.
Then~\eqref{eq:express_char} implies that, for any $t$, and for large enough
$n$ (so that $t/\sqrt{n}$ is in the neighborhood of $0$ where the above
spectral description holds true)
\begin{multline*}
  \Ebb\paren*{e^{\ic t \frac{c_B(Z_n,\xi)-n\ell}{\sqrt{n}}}}
  =e^{-\ic t \sqrt{n} \ell} L_{\mu, t/\sqrt{n}}^n 1(\xi)
  \\=e^{-\ic t \sqrt{n} \ell} \left(
    \lambda(t/\sqrt{n})^n \Pi_{1,t/\sqrt{n}}1(\xi) + \sum_{\rho\not=1}L_{\mu,t/\sqrt{n}}^n \Pi_{\rho,t/\sqrt{n}}1(\xi)
  + L_{\mu,t/\sqrt{n}}^n \Pi_{<1,t/\sqrt{n}}1(\xi)\right).
\end{multline*}
Since the function $1$ belongs to the eigenspace for the eigenvalue $1$, one
has $\Pi_{\rho,0}1=\Pi_{<1, 0}1=0$. Hence, when $n$ tends to infinity, the
functions $\Pi_{\rho,t/\sqrt{n}}1$ and $\Pi_{<1,t/\sqrt{n}}1$ tend to $0$ in
$C^\beta$, and therefore in $C^0$. As the iterates $L_{\mu,t/\sqrt{n}}^n$ are
bounded in sup norm by $1$, it follows that $L_{\mu,t/\sqrt{n}}^n
\Pi_{\rho,t/\sqrt{n}}1(\xi)$ and $L_{\mu,t/\sqrt{n}}^n
\Pi_{<1,t/\sqrt{n}}1(\xi)$ tend to $0$.

In the same way, $\Pi_{1,t/\sqrt{n}}1$ tends to $\Pi_{1,0}1=1$. Finally, we
obtain
\begin{equation}
\label{eq:converge_b}
  \Ebb\paren*{e^{\ic t \frac{c_B(Z_n,\xi)-n\ell}{\sqrt{n}}}}
  =e^{-\ic t \sqrt{n} \ell} \lambda(t/\sqrt{n})^n + o(1).
\end{equation}

As $\lambda$ is an analytic function with $\lambda(0)=1$, it has a Taylor
expansion at $0$. It is convenient to write it as $\lambda(t)=e^{\ic at -b
t^2/2 + o(t^2)}$, with $a=-\ic \lambda'(0)$ and $b=-a^2-\lambda''(0)$.

>From this point on, two approaches are possible:
\begin{itemize}
\item One can proceed directly, without identifying $a$ and $b$.
\item Or one can identify $a$ and $b$ by a tedious spectral computation.
\end{itemize}
The second approach is more precise (using it, one can prove that the
variance in the central limit theorem is positive). However, since the first
one is more illuminating, we will argue first in this way.

Reproducing the above computation but for $c_B/n$, one has
\begin{equation*}
  \Ebb(e^{\ic t c_B(Z_n, \xi)/n}) = \lambda(t/n)^n+o(1) \to e^{\ic a t}.
\end{equation*}
By definition, $c_B(Z_n, \xi)=h_\xi(Z_n^{-1})$. The point $Z_n^{-1}$ is
distributed as the position $W_n$ at time $n$ of the reverse random walk, for
the measure $\check\mu$ given by $\check\mu(x)=\mu(x^{-1})$. As $\check\mu$
is also admissible, this reverse random walk converges almost surely to a
point on the Gromov boundary. As its exit distribution has no atom, this
limit is almost surely different from $\pi_B(\xi)$, the projection of $\xi$
in the Gromov boundary. Then $h_\xi(W_n)-d(e,W_n)$ remains bounded almost
surely by~\eqref{eq:dh_bounded_diff}. Moreover, $d(e,W_n)/n$ tends to $\ell$,
hence $h_\xi(W_n)/n$ tends almost surely to $\ell$. This implies that
\begin{equation*}
  c_B(Z_n,\xi)/n=h_\xi(Z_n^{-1})/n\text{ tends in probability to $\ell$}.
\end{equation*}
Therefore, $\Ebb(e^{\ic t c_B(Z_n, \xi)/n})$ converges to $e^{\ic t \ell}$.
Hence, $a=\ell$.

For future use, note that the same argument shows that, for any sequence
$r_n$ tending to infinity,
\begin{equation}
\label{eq:cBproba2}
  \frac{c_B(Z_n,\xi)- d(e,Z_n)}{r_n} \text{ tends in probability to $0$}.
\end{equation}

Using $a=\ell$, the equation~\eqref{eq:converge_b} becomes
\begin{equation*}
  \Ebb\paren*{e^{\ic t \frac{c_B(Z_n,\xi)-n\ell}{\sqrt{n}}}}
  = e^{-\ic t\sqrt{n}\ell} e^{\ic a \sqrt{n} t - b t^2/2 + o(1)} + o(1)
  \to e^{-b t^2/2}.
\end{equation*}

We recall a version of L\'evy's theorem: if a sequence of real random variables
$X_n$ satisfies $\Ebb(e^{\ic t X_n}) \to \phi(t)$ where $\phi$ is a
continuous function, then $X_n$ converges in distribution to a random
variable $X$, and $\phi(t) = \Ebb(e^{\ic t X})$. Hence,
$(c_B(Z_n,\xi)-n\ell)/\sqrt{n}$ converges to a random variable, whose
characteristic function is $e^{-b t^2/2}$. This implies that $b\in
[0,\infty)$. Writing it as $\sigma^2 \geq 0$, we have proved that
$(c_B(Z_n,\xi)-n\ell)/\sqrt{n}$ converges to $\boN(0,\sigma^2)$ for any $\xi
\in \partial'_B \Gamma$.

Finally,
\begin{equation*}
  \frac{d(e,Z_n)-n\ell}{\sqrt{n}} = \frac{c_B(Z_n,\xi)-n\ell}{\sqrt{n}} + \frac{d(e,Z_n)- c_B(Z_n,\xi)}{\sqrt{n}}.
\end{equation*}
The first term on the right converges to $\boN(0,\sigma^2)$. The second term
on the right converges to $0$ in probability by~\eqref{eq:cBproba2}. Hence,
their sum also converges to $\boN(0,\sigma^2)$. This concludes the proof of
the central limit theorem for $d(e,Z_n)$.

\medskip

Let us show that $\sigma^2(\mu)$ depends analytically on $\mu$. One checks
easily that the map $(\mu,t)\mapsto L_{\mu,t}$ is analytic. Hence,
$(\mu,t)\mapsto \lambda_\mu(t)$ is also analytic, as simple isolated
eigenvalues depend analytically on the operator, see~\cite{kato_pe}. As a
consequence, $\mu\mapsto \lambda_\mu'(0)=\partial\lambda_\mu(t)/\partial t
\restr_{t=0}$ is analytic, and so is $\mu \mapsto \lambda_\mu''(0)$. Since
$\ell(\mu)=a=-\ic \lambda_\mu'(0)$ and
$\sigma^2(\mu)=b=-a^2-\lambda_\mu''(0)$, we recover simultaneously the
analyticity of the escape rate stated in Theorem~\ref{thm:main}, and the
analyticity of the variance in Theorem~\ref{thm:CLT}.

\medskip

It remains to show that the variance $\sigma^2(\mu)$ is nonzero. For this, we
need to identify $b$, by a spectral computation. Let $\nu$ denote the unique
stationary measure of the random walk on $\partial'_B \Gamma$, i.e., the
fixed point of the operator $(L_\mu)^*$. Define $u_t =\Pi_{1,t} 1/\int
\Pi_{1,t} 1\dd\nu$, it is an eigenfunction for the eigenvalue $\lambda(t)$ of
$L_{\mu,t}$, normalized by $\int u_t \dd\nu=1$. Note that $\int \Pi_{1,0}1
\dd\nu = \int 1 \dd\nu=1$, so that the denominator in the definition of $u_t$
does not vanish for small enough $t$. As $u_t$ depends analytically on $t$,
we may write $u_t=u_0 + \ic t v + O(t^2)$, with $u_0=1$ and $\int v\dd\nu=0$
as $\int u_t \dd\nu=1$ for all $t$. Moreover, $L_{\mu,t} u_t = \lambda(t)
u_t$. Keeping only the terms of order $\leq 1$ in this equation and using
$\lambda(t)=1+\ic t a + O(t^2)$, we get for all $\xi\in \partial_B\Gamma$
\begin{equation*}
  \sum_g \mu(g) e^{\ic t c_B(g,\xi)} (1+\ic t v(g\xi)) = (1+\ic t a)(1+\ic t v(\xi))+O(t^2),
\end{equation*}
i.e.,
\begin{equation*}
  1 + \ic t\sum_g \mu(g) c_B(g,\xi) + \ic t \sum_g \mu(g) v(g\xi) = 1+\ic t a + \ic t v(\xi)+O(t^2).
\end{equation*}
Looking at the coefficient of $\ic t$, we get
\begin{equation}
\label{eq:coboundary}
  \sum_g \mu(g) (c_B(g,\xi) + v(g\xi)-v(\xi)-a) = 0.
\end{equation}
Integrating this equation with respect to $\nu$ and using its stationarity,
we recover the equality
\begin{equation*}
  a=\int c_B(g,\xi) \dd\mu(g) \dd\nu(\xi) = \ell,
\end{equation*}
that we proved before by a probabilistic argument.

Define a new cocycle
\begin{equation*}
  \tilde c(g,\xi) = c_B(g,\xi) + v(g\xi)-v(\xi) -\ell.
\end{equation*}
It satisfies $\sum \mu(g) \tilde c(g,\xi)=0$ for all $\xi$
by~\eqref{eq:coboundary}. Moreover, $c_B(Z_n,\xi)-n\ell - \tilde c(Z_n,\xi) =
v(\xi) - v(Z_n \xi)$ is uniformly bounded, hence it is equivalent to have the
central limit theorem for $c_B$ or for $\tilde c$, and the asymptotic
variances coincide.

Let us use $\tilde c$ instead of $c_B$, to define a new operator $\tilde L_t$
by $\tilde L_t u(\xi) = \sum_g \mu(g) e^{\ic t \tilde c(g,\xi)} u(g\xi)$. All
the above discussion applies to this operator. We get in particular a new
eigenvalue $\tilde \lambda(t)$, a new eigenfunction $\tilde u_t$ which can be
expanded as $1+\ic t \tilde v + O(t^2)$ with $\int \tilde v \dd\nu = 0$, a
new value of the derivative $\tilde a$ of $\tilde\lambda$ at $0$. The
equation~\eqref{eq:coboundary} becomes
\begin{equation}
\label{eq:ilmwuxcv}
  \sum_g \mu(g) (\tilde c(g,\xi) + \tilde v(g\xi)-\tilde v(\xi) -\tilde a) = 0.
\end{equation}
By construction, $\sum \mu(g) \tilde c(g,\xi)=0$ for all $g$. Integrating the
above equation with respect to $\nu$, we get $\tilde a =0$.
Then~\eqref{eq:ilmwuxcv} yields $\tilde v=L_0 \tilde v$. By
Proposition~\ref{prop:spectral_description}, $\tilde v$ is constant. As its
integral is zero, $\tilde v=0$. Finally, $\tilde u_t=1+O(t^2)$.

We can now compute the second term in the expansion of $\tilde\lambda(t)$. We
have
\begin{align*}
  \tilde \lambda(t) & = \int \tilde\lambda(t) \tilde u_t \dd\nu
  =\int \tilde L_t \tilde u_t \dd\nu
  =\int \tilde L_t (\tilde u_t-1)\dd\nu + \int \tilde L_t 1 \dd\nu
  \\&
  =\int (\tilde L_t-L_0) (\tilde u_t-1) \dd\nu + \int \tilde L_t 1 \dd\nu.
\end{align*}
The first term is $O(t^3)$ as $\tilde u_t-1=O(t^2)$ and $\tilde L_t
-L_0=O(t)$. The second term is equal to
\begin{equation*}
  \int e^{\ic t \tilde c(g,\xi)} \dd\mu(g) \dd\nu(\xi)
  =1+\ic t \int \tilde c(g,\xi) \dd\mu(g) \dd\nu(\xi) -\frac{t^2}{2} \int
\tilde c(g,\xi)^2 \dd\mu(g)\dd\nu(\xi) + O(t^3).
\end{equation*}
Since the variance in the central limit theorem is given by
$-\tilde\lambda''(0)$, we obtain
\begin{equation*}
  \sigma^2 = \sum_g \mu(g) \int \tilde c(g,\xi)^2 \dd\nu(\xi)\geq 0.
\end{equation*}
If it vanishes, then all quantities $\tilde c(g,\xi)$ are zero for $g$ in the
support of $\mu$. Using the definition of $\tilde c$ and the cocycle equation
for $c_B$, this gives
\begin{equation*}
  c_B(Z_n,\xi) = n\ell + v(\xi)-v(Z_n \xi)
\end{equation*}
for any $Z_n$ in the support of $\mu^{*n}$. Take $n$ with $\mu^{*n}(e)>0$,
by~\eqref{eq:exists_good_N}. Applying the above equation to $Z_{n}=e$, we get
$0=n\ell$, a contradiction with the positivity of $\ell$ that follows from
the non-amenability of $\Gamma$. Thus, $\sigma^2 > 0$.\qed

\medskip

\begin{rmk}
Several of the above arguments could be replaced by martingale arguments, as
in~\cite{benoist_quint_TCL_hyperbolic}, but we have opted for a
self-contained spectral treatment. Note that many objects that appear in this
proof are also present in the martingale argument. For instance, the key step
in the martingale argument is to construct a function $v$ such
that~\eqref{eq:coboundary} holds. This function $v$ (a solution of the
Poisson equation) appears naturally in the spectral framework, as the
derivative of $t\mapsto u_t$.
\end{rmk}

There is nothing special about the Busemann cocycle in the above proof: the
next theorem is proved exactly in the same way.

\begin{thm}
Let $\Gamma$ be a nonelementary hyperbolic group with a word distance,
endowed with an admissible probability measure $\mu$ having an exponential
moment. Let $c: \Gamma \times \partial'_B \Gamma \to \R$ be a cocycle on a
minimal subset $\partial'_B \Gamma$ of the Busemann boundary $\partial_B
\Gamma$. Assume that the cocycle is H\"older continuous, i.e., for every $g
\in \Gamma$, the map $\xi \mapsto c(g, \xi)$ is H\"older continuous on
$\partial'_B \Gamma$.

Let $Z_n$ denote the right random walk on $\Gamma$ driven by $\mu$. Then
there exists $\ell \in \R$ such that, for any $\xi \in \partial'_B \Gamma$,
the quantity $c(Z_n, \xi)/n$ converges almost surely to $\ell$. Moreover,
there exists $\sigma^2 \geq 0$ such that
\begin{equation*}
 \frac{c(Z_n, \xi) - n\ell}{\sqrt{n}} \to \boN(0,\sigma^2).
\end{equation*}
Additionally, $\sigma^2=0$ if and only if there exists a H\"older continuous
function $v$ on $\partial'_B \Gamma$ such that $c(g,\xi) = v(g\xi) - v(\xi)$
for all $g\in \Gamma$ and all $\xi \in \partial'_B \Gamma$ (and in this case
$\ell=0$ also). This is also equivalent to the uniform boundedness of $c$.

Finally, both $\ell$ and $\sigma$ depend analytically on $\mu$.
\end{thm}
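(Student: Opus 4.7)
The plan is to mirror the spectral argument of Section~\ref{sec:CLT}, with the Busemann cocycle replaced by the general H\"older cocycle $c$. Introduce the twisted transfer operator
\begin{equation*}
  L_{\mu, t} u(\xi) = \sum_{g\in\Gamma} \mu(g) e^{\ic t c(g,\xi)} u(g\xi)
\end{equation*}
acting on $C^\beta(\partial'_B \Gamma)$, where $\beta>0$ is small enough both to be below the H\"older exponent of $c$ and for Proposition~\ref{prop:spectral_description} to apply to $L_{\mu, 0} = L_\mu$. The cocycle identity for $c$ yields, by induction on $n$, the fundamental identity $\Ebb(e^{\ic t c(Z_n, \xi)}) = L_{\mu, t}^n 1(\xi)$, exactly as for the Busemann cocycle. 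To establish analyticity of $(\mu, t) \mapsto L_{\mu, t}$ on $C^\beta$ near $t = 0$, one controls $\norm{e^{\ic t c(g, \cdot)}}_{C^\beta}$ by $C(1 + |t| \norm{c(g, \cdot)}_{C^\beta})e^{|t|\norm{c(g,\cdot)}_{C^0}}$; using the cocycle equation together with H\"older continuity of $c$ on the generators and the stretching bound~\eqref{eq:stupid_contraction}, both norms grow at worst like a fixed multiple of $|g|$ times $e^{\beta |g|}$, so the exponential moment of $\mu$ secures absolute convergence on a complex neighborhood of $0$.

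By Proposition~\ref{prop:spectral_description} combined with analytic perturbation theory \cite{kato_pe}, for $(\mu, t)$ close to $(\mu_0, 0)$ with $\mu_0$ admissible there is an analytic simple dominant eigenvalue $\lambda_\mu(t) = \exp(\ic a(\mu) t - b(\mu) t^2/2 + O(t^3))$ with $\lambda_\mu(0) = 1$, while the rest of the spectrum remains in a disk of radius $<1$ uniformly in small $t$. Writing $\Ebb(e^{\ic s (c(Z_n, \xi) - n a(\mu))/\sqrt{n}}) = e^{-\ic s \sqrt{n} a(\mu)} L_{\mu, s/\sqrt{n}}^n 1(\xi)$ and arguing exactly as from~\eqref{eq:converge_b} onwards produces convergence to $e^{-b(\mu) s^2/2}$, hence the central limit theorem with $\ell(\mu) = a(\mu)$ and $\sigma^2(\mu) = b(\mu) \geq 0$. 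Evaluating instead at scale $t/n$ yields $c(Z_n, \xi)/n \to \ell(\mu)$ in probability for every $\xi$, and exponential concentration coming from the spectral gap (the Le Page argument of~\cite{lepage_trick}, see also~\cite{benoist_quint_livre}, applied to the real perturbation $e^{s c(g,\xi)}$) upgrades this to almost sure convergence via Borel--Cantelli. The analyticity of $\ell(\mu) = -\ic \partial_t \lambda_\mu(0)$ and of $\sigma^2(\mu) = -\ell(\mu)^2 - \partial_t^2 \lambda_\mu(0)$ then follows from the joint analyticity of the eigenvalue in $(\mu, t)$.

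For the variance dichotomy, reproduce the spectral computation at the end of Section~\ref{sec:CLT}: expand the normalized eigenfunction as $u_t = 1 + \ic t v + O(t^2)$ with $v \in C^\beta(\partial'_B \Gamma)$ and $\int v\, \dd\nu = 0$, derive the relation $\sum_g \mu(g)(c(g, \xi) + v(g\xi) - v(\xi) - \ell) = 0$, and introduce $\tilde c(g, \xi) = c(g, \xi) - \ell + v(g\xi) - v(\xi)$. Rerunning the analysis with $\tilde c$ gives $\sigma^2(\mu) = \sum_g \mu(g) \int \tilde c(g, \xi)^2 \dd\nu(\xi)$. Vanishing forces $\tilde c(g, \xi) = 0$ for $g \in \supp\mu$ and $\nu$-almost every $\xi$, extended to all $\xi$ by H\"older continuity and the full support of $\nu$ on $\partial'_B \Gamma$, and propagated to all $g \in \Gamma$ by admissibility. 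Applying this at $Z_N = e$ for some $N$ with $\mu^{*N}(e) > 0$ (from~\eqref{eq:exists_good_N}) forces $\ell = 0$ and exhibits $c$ as the H\"older coboundary of $v$. Conversely, a H\"older coboundary on the compact space $\partial'_B \Gamma$ is automatically bounded, and if $c$ is bounded then $\ell$ must vanish (the iterates $c(Z_n, \xi) = \sum$ of cocycle values stay bounded, so their average tends to $0$) and a H\"older solution $v$ to $c(g, \xi) = v(g\xi) - v(\xi)$ can be recovered by a Gottschalk--Hedlund-type argument on the trajectory shift, or as the fixed point of a contracting operator in the spirit of Section~\ref{sec:entropy}. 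The main technical obstacle is the uniform control of $(\mu, t) \mapsto L_{\mu, t}$ on $C^\beta$, hinging on the at-worst linear (times $e^{\beta|g|}$) growth of the H\"older constants of $c(g, \cdot)$ in $|g|$; once this quantitative estimate is in hand, the rest is a mechanical adaptation of the Busemann case.
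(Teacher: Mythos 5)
Your proposal correctly reconstructs the argument the paper has in mind: for this final theorem the paper simply remarks that it is ``proved exactly in the same way'' as the Busemann cocycle and supplies no further detail, so the content is in the adaptations you spell out. You correctly identify the key quantitative point that is needed beyond the Busemann case, namely that the cocycle relation together with the stretching bound~\eqref{eq:stupid_contraction} controls both $\norm{c(g,\cdot)}_{C^0}$ (linearly in $\abs{g}$) and the $\beta$-H\"older constant of $c(g,\cdot)$ (by a constant times $e^{\beta\abs{g}}$), which, after choosing $\beta$ smaller than the exponential moment parameter, makes $(\mu,t)\mapsto L_{\mu,t}$ analytic on $C^\beta(\partial'_B\Gamma)$; the rest of the CLT, the a.s.\ convergence via large deviations, the variance formula $\sigma^2=\sum_g\mu(g)\int\tilde c(g,\xi)^2\dd\nu(\xi)$, and the analyticity in $\mu$ all follow the printed argument verbatim. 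One small simplification worth noting: you do not need a Gottschalk--Hedlund-type construction (whose validity for a group action on $\partial'_B\Gamma$ would itself require some care) to close the loop on the boundedness equivalence. The chain ``$\sigma^2=0\Rightarrow$ coboundary'' (your spectral computation), ``coboundary $\Rightarrow$ bounded'' (trivially, since $v$ is continuous on a compact space), and ``bounded $\Rightarrow\sigma^2=0$'' (boundedness forces $\ell=0$ and then $c(Z_n,\xi)/\sqrt{n}\to 0$, so by the already-proved CLT the limit $\boN(0,\sigma^2)$ must be $\delta_0$) already gives all three equivalences without any new construction.
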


\bibliography{biblio}
\bibliographystyle{amsalpha}


\begin{dajauthors}
\begin{authorinfo}[sg]
  S\'ebastien Gou\"ezel\\
  Laboratoire Jean Leray, CNRS UMR 6629\\
  Universit\'e de Nantes, 2 rue de la Houssini\`ere\\
  44322 Nantes, France\\
  \url{sebastien.gouezel@univ-nantes.fr}
\end{authorinfo}
\end{dajauthors}

\end{document}